\newtheorem{theorem}{Theorem}[section]
\newtheorem{prop}[theorem]{Proposition}
\newtheorem{lemma}[theorem]{Lemma}
\newtheorem{proposition}[theorem]{Proposition}
\newtheorem{note}[theorem]{Note}
\newtheorem{assumption}{Assumption}
\theoremstyle{definition}
\newtheorem{Example}[theorem]{Example}
\theoremstyle{remark}
\newcommand{\etalchar}[1]{$^{#1}$}
\newcommand{\RNum}[1]{\uppercase\expandafter{\romannumeral #1\relax}}
\newcommand{\cL}{\mathcal L}
\newcommand{\eps}{\varepsilon}
\newcommand{\cQ}{\mathcal Q}
\newcommand{\pa}{\partial}
\newcommand{\be}{\begin{equation}}
\newcommand{\ee}{\end{equation}}
\newcommand{\R}{\mathbb R}
\newcommand{\bE}{\mathbf E}
\newcommand{\cX}{\mathcal{X}}
\newcommand{\cN}{\mathcal{N}}
\newcommand{\xa}{X_A}
\newcommand{\xg}{X_G}
\newcommand{\xp}{X_P}
\newcommand{\col}{^{[i]}}
\newcommand{\eep}{^{\varepsilon}}
\begin{document}

	\title[Coarse graining for SDEs]{Coarse graining of stochastic differential equations: averaging and projection method}
    
    \author[H.~Duong]{Hong Duong$^{(1)}$}
	\address{$^{(1)}$School of Mathematics, University of Birmingham, 
Birmingham B15 2TT, UK} 
\email{h.duong@bham.ac.uk}

\author[C.~Hartmann]{Carsten Hartmann$^{(2)}$}
\address{$^{(2)}$Institut f\"ur Mathematik, Brandenburgische Technische Universit\"at  Cottbus-Senftenberg, 03046 Cottbus, Germany} 
\email{hartmanc@b-tu.de}

\author[M. Ottobre]{Michela Ottobre$^{(3)}$}
\address{$^{(3)}$Maxwell Institute for Mathematical Sciences and Mathematics Department, Heriot-Watt University, Edinburgh EH14 4AS, UK}
\email{m.ottobre@hw.ac.uk}

\begin{abstract}
 We study coarse-graining methods for stochastic differential equations. In particular we consider averaging and a type of projection operator method, sometimes referred to as effective dynamic via conditional expectations. The projection method (PM) we consider is related to the ``mimicking marginals'' coarse graining approach proposed by Gy\"ongy.   The first contribution of this paper is to provide further theoretical background for the PM and a rigorous link to the Gy\"ongy method. Moreover, we compare  PM  and averaging.   While averaging applies to systems with time scale separation, the PM can in principle be applied irrespective of this. However it is often assumed that the two methods coincide in presence of scale separation. The second contribution of this paper is to make this statement precise, provide sufficient conditions under which these two methods coincide and then show -- via examples and counterexamples -- that this needs not be the case. 

\vspace{5pt}
    {\sc Keywords.} Coarse Graining for Stochastic Differential Equations, two-parameter Markov Semigroups, Averaging, Projection Methods.
    
\vspace{5pt}
{\sc AMS Classification (MSC 2020). 60H10, 34F05,60J60, 34K33, 35B40, 82C31.} 
 
\end{abstract}

 \maketitle

\section{Introduction} \label{sec:intro}

Coarse-graining of nonlinear dynamical systems, also called model (order) reduction,  often involves a combination of systematic mathematical approaches and heuristics based on either physical intuition or observations drawn from data.  Prime examples of this fact are projection operator methods,  among which are  optimal prediction (e.g.~\cite{chorin2000optimal}) or stochastic mode reduction approaches (e.g. in climate modelling \cite{majda2001mathematical}) that eliminate small-scale variables from a potentially high-dimensional system.  Other coarse-graining approaches include multiscale methods, prominently averaging and homogenization,  \cite{pavliotis2008multiscale}, thermodynamic (mean field) limits \cite{sznitman1991topics}, Mori-Zwanzig \cite{kupferman2004fractional, mori1965continued},  with this list being by far not exhaustive.

The general purpose of all such methods is to reduce a high dimensional system to a lower dimensional one, in such a way that the latter preserves certain properties of the original dynamics. The eliminated degrees of freedom are often referred to as unresolved variables.  For many of the available methods the starting point is the modelling of the entire high dimensional system; this involves first a modelling step in which the unresolved variables are parametrised by e.g.~a stochastic process or a probability measure, depending on the context, and then, secondly, the elimination step per se. The first step is an issue in its own right;  for example, in the context of slow-fast systems, identifying suitable dimensionless small parameters and modelling the fast variables can be a big challenge \cite{izaguirre2009adaptive}. In this paper we do not consider this first step but rather assume that a model for the entire system is given and focus solely on the elimination, i.e.~the coarse graining step.\footnote{In this paper we use the terms dimension reduction and coarse graining interchangeably.}  In particular we will be concerned with coarse graining of systems modelled by Stochastic Differential Equations (SDEs).  

Coarse graining is needed and routinely used in molecular dynamics, continuum mechanics, control engineering, atmosphere and ocean dynamics, and in many other fields of applied science. Yet some dimension reduction methods have by now undergone complete mathematization, others less so. Moreover, when more than one method is applicable to the system at hand, rigorous comparisions are often still lacking but certainly needed, as advocated e.g.~in  \cite{givon2004extracting}.   Since appropriate mathematization can contribute towards comparison, the two issues are related.  

In this paper we consider two types of coarse graining methods and compare them.  Specifically, we consider the classical averaging principle, which assumes time scale separation between variables (e.g. \cite{khasminskij1968averaging,veretennikov1991averaging}),  and projection operator methods for SDEs in the spirit of the works \cite{legoll2010effective,legoll2017pathwise}. The latter are an approximation of the dimension reduction approach introduced by Gy\"ongy in his seminal paper \cite{gyongy1986mimicking}.  In particular, we address the question under which conditions the projection method reduces to the averaging principle. We motivate  addressing this question in some depth in Subsection \ref{subsec:comparison} and more briefly below.

There are various  projection operator type approaches in the literature that are closely related to the one we will consider, such as  optimal prediction \cite{chorin2000optimal}, stochastic modelling \cite{just2001stochastic},  Born-Markov \cite{moy1999born}, or Born-Oppenheimer approximation \cite{panati2007time}. There are subtle differences between the aforementioned methods though and, for the sake of clarity, we will recall the method presented in \cite{legoll2010effective},  in Section \ref{sec: Main Results}. This is the one we will study here and that from now on we refer to as the Projection Method (PM).

For now we just briefly mention that any projection operator-based method can 
be understood as a form of best approximation in a weighted $L^2$ space where the weight is a probability distribution of choice. In the PM one takes a conditional probability distribution as weight, namely the distribution of the entire invariant measure of the system, conditional on knowledge of the resolved variables. The resulting  projection operator is a conditional expectation.  

While averaging methods for SDEs are by now well grounded on rigorous mathematical foundations, the theory underpinning the PM is less developed (though there are results in the literature, see e.g. \cite{legoll2011some,hartmann2020coarse}). 
The first goal of this paper is to contribute to the development of the mathematical understanding of the PM and to make the relation between the PM and the Gy\"ongy approach rigorous, which helps to shed light on potential and limitations of the PM. We show that, for a large class of SDEs, the dynamics produced via the GM and the one produced via the PM have the same long time behaviour.   We moreover  provide sufficient conditions under which the PM works (i.e. conditions under which the produced coarse grained dynamics has the desired properties) and examples where it fails.   

The PM can in principle be used irrespective of whether there is a time-scale separation in the system (by which we mean that the high dimensional system is a slow-fast system, with the unresolved variables being the fast part of the dynamics). However, it is a common belief (e.g.~\cite{chorin2006prediction,kantz2004fast}) that, in presence of scale separation,  the coarse-grained dynamics obtained by PM and the one obtained via averaging coincide (we will be more precise on this in Section \ref{sec: Main Results}). This belief seems to stem from  the fact that when the original system is reversible (in gradient form) then it is (to an extent, which we clarify in the next section) true that the PM approximation boils down to the approximation produced via averaging. 
The second main goal of this paper is to point out that, in general, it is {\em not} the case that these two coarse graining procedures coincide in presence of scale separation. We provide both counterexamples (see Section \ref{sec: Main Results} and Section \ref{sec: rhox and rhox|y}) and sufficient
conditions under which this statement holds.

We note in passing that it is moreover often argued that the so-called \emph{Mori--Zwanzig projection operator formalism} is an extension of the optimal prediction projection operator formalism that can account for lack of scale separation \cite{chorin2002optimal}. Nevertheless, it is not even clear, beyond special cases (e.g. \cite{kupferman2004fractional}), whether high-fidelity non-Markovian equations can be obtained from the Mori--Zwanzig coarse-graining procedure when no scale separation is present; cf.~\cite{zhang2016effective,lin2018milestoning}. We will not touch upon the non-Markovianity issue, and focus on the  comparison between  PM and averaging. 


\section{Some Background, Main Results and relation to literature}\label{sec: Main Results}

 We first give a minimal description of averaging, the PM and the Gy\"ongy method and introduce essential setup and notation - enough to be able to present the main results of the paper. More background on these methods and precise assumptions are deferred to  subsequent sections.

In what follows, we will consider systems that are either of the form 
\begin{subequations}\label{eq:sfep=1}
    \begin{alignat}{1}\label{eq:sfxep=1}
        dX_t & = f(X_t,Y_t)\,dt + \alpha(X_t,Y_t)\,dU_t\,, \qquad X_0=x\\ \label{eq:sfyep=1}
        dY_t & = g(X_t,Y_t)\,dt + \beta(X_t,Y_t)\,dW_t\,, \qquad Y_0=y\,,
    \end{alignat}
  \end{subequations}
  or of the form
  \begin{subequations}\label{eq:sf}
    \begin{alignat}{1}\label{eq:sfx}
        dX^\eps_t & = f(X^\eps_t,Y^\eps_t)\,dt + \alpha(X^\eps_t,Y^\eps_t)\,dU_t\,, \qquad X_0=x\\ \label{eq:sfy}
        dY^\eps_t & = \frac{1}{\eps}g(X^\eps_t,Y^\eps_t)\,dt + \frac{1}{\sqrt{\eps}}\beta(X_t\eep,Y_t\eep)\,dW_t\,, \qquad Y_0=y\,, 
    \end{alignat}
  \end{subequations}  
where $(X_t, Y_t), (X_t^{\epsilon}, Y_t^{\epsilon}) \in \R^{d}\times \R^{n-d}$,  $U$ and $W$ are independent Brownian motions in $\R^d$ and $\R^{n-d}$, respectively, and $f,\alpha, g, \beta$ are coefficients of appropriate dimensionality, namely $f\colon\R^d\times \R^{n-d} \rightarrow \R^d,  g\colon\R^d\times \R^{n-d} \rightarrow \R^{n-d}$ for the drift and $\alpha\colon\R^d\times \R^{n-d} \rightarrow \R^{d\times d}, \beta\colon\R^d\times \R^{n-d} \rightarrow \R^{(n-d)\times (n-d)}$ for the diffusion coefficients.
In the second system of equations,  $0<\eps<1$ is a parameter,   and sometimes  we will suppress the dependence on $\eps$ and simply write $X_t$ instead of $X^\eps_t$ to keep the notation compact. And moreover for any time-dependent process or function,  say $K_t$, we will use interchangeably the notation $K_t$ and $K(t)$. 

Unless otherwise stated, we will always assume that \eqref{eq:sfep=1} has a unique weak solution, that  the  coefficients of \eqref{eq:sfep=1} are smooth and that the system is uniformly elliptic. By the latter we mean that there exists $\zeta>0$ such that 
\begin{equation}\label{eqn:unif_ellipticity_initial_system}
    \langle \hat \Sigma(x,y) v, v \rangle \geq \zeta |v|^2, \quad \mbox{for every } (x,y) \in\R^{d}\times \R^{n-d}, v \in \R^n \,,   
\end{equation}
where $\hat\Sigma= \hat\sigma \hat\sigma^T$, with $\hat\sigma$ being the overall diffusion matrix of \eqref{eq:sfep=1},  and $\langle \cdot , \cdot \rangle, |\cdot | $ denote Euclidean scalar product and norm, respectively. 
Under these standing assumptions the  law of $Z_t:=(X_t, Y_t)$ has a strictly positive density on $\R^n$ for every $t>0$; we denote such a density by   $\rho_t=\rho_t(x,y)$.\footnote{Throughout this paper, unless otherwise stated, we assume that all the measures with which we deal have a density and, with abuse of notation,  we use the same letter for the measure and the density, so that e.g. we write $\nu(dx)=\nu(x) dx$ } 
We also assume that  $Z_t$  admits a unique invariant measure,  with density $\rho=\rho(x,y)$. The  $x-$marginals of $\rho_t$ and $\rho$ are denoted by $\bar \rho_t$ and $\bar \rho$, respectively: 
 $$
   \bar{\rho}_t(x):= \int_{\R^{n-d}} \rho_t (x,y)\, dy \, ,
  $$
  and 
  \begin{equation}\label{eqn:bar rho def}
  \bar{\rho}(x):= \int_{\R^{n-d}} \rho(x,y) \, dy \,.
  \end{equation}
  We will often need to condition with respect to the $x$- variables, so we introduce  
the conditional density $\rho_t(\cdot \vert x)$ at time $t\geq 0$ and the {\em equilibrium conditional density} (ECD) $\rho(\cdot \vert x)$    of system \eqref{eq:sfep=1} are then defined, namely:
\begin{equation}\label{joint and marginal time t}
\rho_t(y\vert x ) : = \frac{\rho_t(x,y)}{\int_{\R^{n-d}} \rho_t(x,y) \, dy}=\frac{\rho_t(x,y)}{\bar{\rho}_t(x)}\, , 
\end{equation}
and 
\begin{equation}\label{eqn:conditional for system 1}
     \rho(y\vert x) := \frac{\rho(x,y)}{\int_{\R^{n-d}} \rho(x,y) \, dy} = \frac{\rho(x,y)}{\bar\rho(x)}.
 \end{equation}

That is, if $(X,Y)$ is a random variable distributed according to $\rho$, then $\rho(\cdot \vert x)$ is the law of $Y$ given $X$; similarly for $\rho_t(\cdot \vert x)$. 

Let us now  introduce the Gy\"ongy method (GM), the PM and averaging, in turn. In all three cases we want to coarse grain system \eqref{eq:sfep=1} (or system \eqref{eq:sf}) and `eliminate' the variables $Y_t$ (or $Y_t\eep$), i.e. create a lower dimensional dynamics, of the same dimension as $X_t$. The resulting dynamics will be denoted by $X_G, X_P$ or $X_A$, depending on which method has been used.

\subsection{The Gy\"ongy method and the Projection method}
For the purposes of this paper, the main message of \cite{gyongy1986mimicking} can be summarised as follows.\footnote{The content of \cite{gyongy1986mimicking} is much richer, here we express it in a way that is convenient to us, but we will make more remarks on this later. } Suppose we have a system of the form \eqref{eq:sfep=1} which we want to coarse grain and in doing so we want the resulting  dynamics  $\{X_G(t)\}_{t\geq 0 } \subseteq \R^d$ to preserve the law of the $x$-marginal of the system; that is, we want $X_G(t)$ to be such that 
\begin{equation}\label{eqn:law equality gyongy}
Law(X_G(t)) = Law(X_t)  \quad \mbox{for every } t\geq 0.
\end{equation}
According to \cite{gyongy1986mimicking}, this is achieved by the following SDE
\begin{equation}\label{eqn:gyongy-coarse-graining}
    dX_G(t) = f_G(t,X_G(t)) dt+ \alpha_G^{1/2}(t, X_G(t)) dU_t
\end{equation}
where 
\begin{align}
    f_G(t,x) &:= \int_{\R^{n-d}} f(x,y) \rho_t(y\vert x) dy \, , \label{eqn:gyongy-averaged-coefficients} \\
    \alpha_G(t,x) &:= \int_{\R^{n-d}} \alpha\alpha^T(x,y) \rho_t(y\vert x) dy \,,\label{eqn:gyongy-averaged-coefficients2}
\end{align}
 where $\alpha_G^{1/2}$ denotes the positive definite square root of the positive definite matrix $\alpha_G$ and the superscript $^T$ denotes transpose.  
 We postpone a proof of the fact that \eqref{eqn:gyongy-coarse-graining} satisfies \eqref{eqn:law equality gyongy} to Subsection \ref{subsec:proof gyongy}.

 While the Gy\"ongy approach preserves the laws of the marginals for every $t\geq 0$, when applying the PM  the (minimal) requirement is that the resulting coarse grained dynamics $X_P(t)$ should have the same asymptotic behaviour as $X_t$. Assuming that $\rho_t$ converges to $\rho$ as $t\rightarrow \infty$ (in a suitable sense) then, under appropriate assumptions, $\bar \rho_t$ (which is by definition the law of $X_t$) converges to $\bar \rho$ as $t\rightarrow \infty$. Hence we require that the law of $X_P(t)$ should converge (weakly), as $t\rightarrow \infty$, to $\bar \rho$.

Coming back to the GM for a moment, let us  observe that,  since the law of $X_G(t)$ and the law of $X_t$ coincide for every $t\geq 0$, then  the asymptotic behaviour of these two processes  coincides as well. So, if we want to construct a process with the same asymptotic behaviour as $X_t$, we would in principle be done. However  explicitly simulating the process $X_G$ can be difficult, because it requires sampling  the conditional law $\rho_t(\cdot\vert x)$ at every time $t$. So in some sense the construction of the process $X_G$ has not achieved any real dimensionality reduction, as simulating/solving the equation for the process $X_G$ still potentially requires simulating the whole process \eqref{eq:sfep=1} in order to access the conditional distribution  $\rho_t(\cdot\vert x)$.  This is not surprising, as in going from $X_t$ to $X_G(t)$ no approximation has been made (the GM is an exact rewriting - this is explained in Section \ref{sec: sec 3}). 

The PM can then be viewed as an `approximation' of the Gy\"ongy  strategy in the sense that, since $\rho_t(\cdot\vert x)$ may be difficult or expensive to access, via the PM one  replaces projections on  $\rho_t(\cdot\vert x)$ with projections on the ECD $\rho(y\vert x)$. Namely, the dynamics  $X_P$ produced via the PM is the solution of the SDE
\begin{equation}\label{eq:sfgenaralprojection}
    dX_P(t) = f_P(X_P(t)) dt+ \alpha_P^{1/2}(X_P(t)) dU_t \,, \qquad X_P(0)=x \,, 
\end{equation}
where 
\begin{align}
    f_P(x) &:= \int_{\R^{n-d}} f(x,y) \rho(y\vert x) dy \, , \label{eqn:projection-coefficients} \\
    \alpha_P(x) &:= \int_{\R^{n-d}} \alpha \alpha^T(x,y) \rho(y\vert x) dy \,,\label{eqn:projection-coefficients2}
\end{align}
 and $\rho(\cdot \vert x)$ is the  ECD of \eqref{eq:sfep=1}, defined in \eqref{eqn:conditional for system 1}. 

Reference \cite{chorin2000optimal2} is probably one of the first works on the PM for Hamiltonian systems, but to our knowledge  the PM for SDEs was first introduced in \cite{legoll2010effective} and there applied to reversible systems of the form
\begin{subequations}\label{eq:sfgradient}
    \begin{alignat}{1}\label{eq:sfxgradient}
        dx_t & = -\nabla_xV(x_t,y_t)\,dt + \sqrt{2}\,dU_t \, , \qquad x_0=x\\ \label{eq:sfygradient}
        dy_t & = -\nabla_yV(x_t,y_t)\,dt + \sqrt{2}\,dW_t\,, \qquad y_0=y \,,
    \end{alignat}
  \end{subequations}
  where $V\colon\R^n \rightarrow \R $ is a smooth confining potential. \footnote{By {\em confining} we mean that $V(z) \rightarrow \infty $ when $|z|\rightarrow \infty$. } Of course \eqref{eq:sfgradient} is a special case of \eqref{eq:sfep=1}. 
  The invariant measure $\rho= \rho_V$ of \eqref{eq:sfgradient} is explicitly known, 
  \begin{equation}\label{eqn:gradient inv meas}
  \rho_V(x,y) = e^{-V(x,y)}
  \end{equation}
  (assuming here for simplicity that the normalization constant for $\rho_V$ is contained in $V$), so that in this case also  the corresponding  marginal $\bar\rho_V$ and conditional $\rho_V(\cdot \vert x)$ are known
 
  \begin{equation}\label{cond meas gradient case}
 \bar \rho_V(x) := \int_{\R^{n-d}} e^{-V(x,y)} dy \, , \qquad \rho_V(y\vert x) := \frac{\rho_V(x,y)}{\bar{\rho}_V(x)} \stackrel{\eqref{eqn:gradient inv meas}}{=} \frac{e^{-V(x,y)}}{\int_{\R^{n-d}}e^{-V(x,y)}dy} \,.
\end{equation}
  Applying the PM to \eqref{eq:sfgradient} produces the dynamics 
\begin{equation}\label{eq:projection-gradientsystem}
dx_P(t) = -v(x_P(t)) dt + \sqrt{2} dU_t \, , \qquad x_P(0)=x \,,
\end{equation}
with
$$
v(x):= \int_{\R^{n-d}} \nabla_x V(x,y) \rho_V(y\vert x) dy \,.
$$
It is easy to see that $\bar{\rho}_V$ is an invariant measure for  \eqref{eq:projection-gradientsystem}.  Indeed the following  simple calculation shows that \eqref{eq:projection-gradientsystem} is precisely a Langevin dynamics with drift given by $\nabla_x \log \bar{\rho}_V(x)$, and hence (under well-studied assumptions on $V$, \cite{pavliotis2014stochastic}) it will sample from $\bar{\rho}_V$: 
\begin{align} 
    \nabla_x \log \bar{\rho}_V(x) &= \nabla_x \log \left(\int e^{-V(x,y)} \, dy \right)
     = -\frac{\int \nabla_xV(x,y) e^{-V(x,y)} \, dy}{\int e^{-V(x,y)} \, dy} \nonumber\\
     &= 
    - \int \nabla_xV(x,y) \rho_V(y\vert x)  \, dy = -v(x)\,.\label{proj method gradient explicit caculation}
\end{align}
In the physics literature, starting from Kirkwood \cite{kirkwood1935statistical}, the negative grad log marginal density, $-\log\bar{\rho}_V$, drift is sometimes called the potential of mean force or free energy, owing to the fact that the projected drift $v$ is a gradient force.
The explicit knowledge of the measure $\rho_V$, and hence of $\bar\rho_V$ and $\rho_V(\cdot \vert x)$, can also be exploited to prove that the law of $x_P(t)$ converges to $\bar\rho_V$ as $t\rightarrow \infty$ \cite{legoll2010effective}.

If system \eqref{eq:sfep=1} is not in gradient form or, more in general, if the explicit form of the invariant measure $\rho$ is not known then 
it is still easy to see that $\bar \rho$ is invariant for $X_P$, see \cite{zhang2016effective} and Proposition \ref{lem:bar rho invariant for Lp}. However proving convergence of $X_P$ to the equilibrium marginal $\bar \rho$ is more challenging. This is due to the fact that proving convergence to equilibrium for systems where the invariant measure is not a priori known is, notoriously, a difficult problem. 

To circumvent this issue we first observe that, for the purposes of our analysis,  the dynamics \eqref{eqn:gyongy-coarse-graining} obtained via the GM can be regarded as  non-autonomous (more comments on this in Section \ref{sec: sec 3}) and then use the relationship between the PM and the Gy\"ongy approximation and study the  long time behaviour of the process $X_P$ in \eqref{eq:sfgenaralprojection}   by combining the results of \cite{gyongy1986mimicking} (which we slightly adapt to our context in Proposition \ref{Prop: extension gyongy})  with 
 some intuitive (but not simple) results of \cite{angiuli2013hypercontractivity, cass2021long}, on the long time behaviour of non-autonomous SDEs.  


 One of the main results of this paper is then Theorem \ref{thm:projection method works}, which we state and prove in Section \ref{sec: statement and proof of Thm 6.2}. Informally, the theorem can be stated as follows. 
 
\smallskip\noindent
 {\bf Informal statement of Theorem \ref{thm:projection method works}.} {\em Under appropriate  assumptions (which guarantee that both $X_G$ and $X_P$ are well defined and admit a unique invariant measure), the process $X_G$ and the process $X_P$ have the same long-time behaviour. Recalling that,  
  as a consequence of property \eqref{eqn:law equality gyongy}, $X_G$ converges weakly to $\bar\rho$ as $t\rightarrow \infty$, this implies that  also $X_P$ converges to $\bar \rho$ as $t\rightarrow \infty$.}
  
  \smallskip\noindent
Theorem \ref{thm:projection method works} then serves two purposes: on the one hand it makes the relation between the Gy\"ongy approximation $X_G$ and the PM approximation $X_P$ rigorous; on the other hand it gives sufficient conditions under which the dynamics produced via the PM has the desired asymptotic  property.
 We emphasize that our proof does not make use of any specific structure of system \eqref{eq:sfep=1}.  In particular, while previous analysis of the PM was either tailored to reversible diffusions \cite{legoll2010effective,zhang2016effective, legoll2017pathwise,lelievre2019pathwise,nuske2021spectral} or to  specific non-reversible diffusions for which the form of the invariant measure is still known \cite{duong2018quantification, legoll2019effective,hartmann2020coarse}, our analysis bypasses the issue of reversibility entirely. Nonetheless, the results of this paper are only concerned with weak convergence to $\bar\rho$, and we do not study pathwise properties. 

 Throughout the paper we comment on the extent to which the conditions of Theorem \ref{thm:projection method works} are sharp and  below we give an example of a process for which both the GM and the PM fail, see Example \ref{example:short version}.  In Section \ref{sec: sec 3} we also present the heuristic behind the proof of Theorem \ref{thm:projection method works}. Technically speaking, the difficulty in proving Theorem \ref{thm:projection method works} comes from the fact that the long-time behaviour of  non-autonomous dynamics is typically described by objects more complicated than a single invariant measure, see Section \ref{sec: statement and proof of Thm 6.2} for more background on this.  It is the specific structure of the coefficients of \eqref{eqn:gyongy-coarse-graining} that allows to conclude that the asymptotic behaviour of  \eqref{eqn:gyongy-coarse-graining} is indeed described by a unique invariant measure, which turns out to be the same invariant measure of the PM dynamics $X_P$.


Let us now come to the second main contribution of this paper; to explain it we first need to introduce averaging. 

\subsection{Averaging for SDEs. }
We have presented the PM and GM for systems of the form \eqref{eq:sfep=1}, but both can also be applied to models of the form \eqref{eq:sf}. We will come back to this later. Averaging, on the other hand,  is more commonly applied to  systems of the form \eqref{eq:sf}, so-called {\em slow-fast} systems. Indeed, due to the scaling properties of Brownian motion,  as $\eps$ tends to zero $Y_t\eep$ moves faster and faster, so that the parameter $\eps$ defines (in an explicit way) the scale separation between the slow process (SP) $X_t\eep$ and the fast process $Y_t\eep$. The behaviour as $\eps \rightarrow 0$ of slow-fast processes   is studied via {\em (stochastic) averaging}, the basics of which can be summarised as follows.   Since $Y_t\eep$ evolves much faster than $X_t\eep$, intuitively, the former process will have `reached equilibrium' while the latter has remained substantially unchanged. Hence, to obtain a coarse grained description of \eqref{eq:sf}, one fixes the value of the slow process, say to $X_t\eep=x$, and considers an intermediate process, the so-called `Frozen Process' or `auxiliary process', namely the process $Y^{(x)}_t$ which solves the following SDE
\begin{equation}\label{eq:fastSDE}
            dY^{(x)}_t = g(x,Y^{(x)}_t)\,dt + \beta(x,Y^{(x)}_t)\,dW_t\, \, \quad Y_0^{(x)}=y \,. 
\end{equation}
We emphasize that in the above $x$ is fixed but arbitrary, so $\{Y^{(x)}_t\}_{x \in \R^d}$ 
can be seen as a one-parameter family of processes, parametrized by $x$.
 Assuming that $Y_t^{(x)}$ is ergodic, i.e. that it admits, for each $x$ fixed, a unique invariant measure, $\rho^{(x)}$,  and moreover that $Y_t^{(x)}$ converges, in a suitable sense,  to $\rho^{(x)}$ (irrespective of the initial state $y$)  one can show that, as $\eps$ tends to zero, the slow process $X_t\eep$ converges (at least weakly) to the so called averaged dynamics, $X_A(t)$, namely to the solution of the SDE whose coefficients are obtained from those of the process  \eqref{eq:sfx} by `averaging them' with respect to (or `projecting them' on) the   measure $\rho^{(x)}$ (which, for each $x$ fixed, is a measure on $\R^{n-d}$):
\begin{equation}\label{eqn:averaged dynamics intro}
 d X_A(t)=   f_A (X_A(t)) dt + \sigma_A^{1/2}(X_A(t)) dU_t\, ,  
\end{equation}
where
\begin{equation}\label{eqn:averaged coefficients}
 f_A(x) = \int_{\R^{n-d}} f(x,y) \rho^{(x)}(y) dy \,, \quad  \sigma_A(x) = \int_{\R^{n-d}} (\alpha\alpha^T)(x,y) \rho^{(x)}(y) dy \,. 
\end{equation}
 Notice that the resulting dynamics $X_A(t)$ is a dynamics of the same dimension as $X\eep(t)$, so averaging has achieved dimension reduction by exploiting time-scale separation. The averaged dynamics $X_A(t)$ \eqref{eqn:averaged dynamics intro} typically approximates the process  \eqref{eq:sfx}  on finite time intervals and, under conditions on the coefficients,   on infinite-time horizons as well \cite{pavliotis2008multiscale, crisan2022poisson, schuh2024conditions}.    That is, one can typically provide quantitative estimates of the form 
 \begin{equation}\label{bound:Uit averaging}
    \left \vert \mathbb E u (X\eep(t)) - \mathbb E u(X_A(t))
    \right \vert \leq C \varepsilon^{\gamma}
 \end{equation}
 for some $\gamma>0$, functions $u: \R^{d} \rightarrow \R$ in an appropriate class,   and $C$ a constant that may or may not depend on time. When $C$ does not depend on time,  we say that $X\eep$ converges to $X_A$ uniformly in time. 

\subsection{Comparing averaging and the projection method}\label{subsec:comparison}
Next, we address the following question

\begin{equation}
  \tag{Q}\label{Q}
  \parbox{\dimexpr\linewidth-4em}{%
    \strut
    \emph{In presence  of  scale  separation, do  projection  method and averaging  coincide?}
    \strut
  }
\end{equation}

\smallskip
\noindent
That is, is it the case that the dynamics $X_P$ in \eqref{eq:sfgenaralprojection} and the dynamics $X_A$ in \eqref{eqn:averaged dynamics intro} coincide, when there is some scale separation in the system?. In this paper we phrase this question precisely and provide sufficient conditions under which the answer is affirmative.  More importantly,  we point out that  is in general the answer needs not be affirmative, as the following example shows. 


\begin{Example}\label{example:short version}\textup{
Consider the following  two-dimensional Ornstein-Uhlenbeck process:
    \begin{subequations}\label{eq:sfeps}
\begin{align}
        dX^\eps_t &= -\left(X^\eps_t - Y^\eps_t\right) dt \,,\quad X^\eps_0=x \in \R\label{example:Carsten'sexample1eps}\\
dY^\eps_t &= -\frac{1}{\eps}Y^\eps_t \,  dt + \sqrt{\frac{2}{\eps}} \, dW_t \,,\quad Y^\eps_0=y 
 \in \R \label{example:Carsten'sexample2eps} \,.
    \end{align}
  \end{subequations}
  To calculate the averaging approximation we observe that the associated frozen process is given by 
 $$
 dY_t^{(x)} = - Y_t^{(x)} dt + \sqrt{2}  dW_t,  
 $$
 with invariant measure $\rho^{(x)} \sim \cN(0,1)$ (here neither the frozen process nor its stationary measure $\rho^{(x)}$ depend on $x$, as the second equation is decoupled from the first one, we keep $x$ in the notation merely for consistency).\footnote{An explicit calculation using variation of constants shows that $Y_t$ is Gaussian $m_t=e^{-t}y$ and variance $\sigma^2(t)=1-e^{-2t}$}
 As a consequence, according to \eqref{eqn:averaged dynamics intro},  coarse graining via averaging produces a process $\xa$ which solves the differential equation 
\begin{equation}\label{example:Carsten'sexampleAvg}
    d \xa(t) = - \xa(t)\, dt,\quad \xa(0)=x\,.
\end{equation}
To apply the PM we need to calculate the invariant measure  of system \eqref{example:Carsten'sexample1eps}-\eqref{example:Carsten'sexample2eps} and then the associated ECD . Note that such  measures will in general depend on $\eps$, so we denote them $\rho\eep$ and $\rho\eep( \cdot \vert x)$, respectively.  By direct calculation one can see that $\rho\eep$ is the centered Gaussian    
\begin{equation}
\label{Gaussian invariant}
\rho\eep=\cN(0,\Sigma\eep)\,,\quad \Sigma=\begin{pmatrix}
    \frac{\eps}{1+\eps} & \frac{\eps}{1+\eps} \\ \frac{\eps}{1+\eps} & 1
\end{pmatrix}\,,    
\end{equation}
 and (by completing the square in the PDF associated with $\rho\eep$) the ECD is 
\begin{equation}
\label{eq: ECD}
\rho\eep(\cdot|x) = \cN\left(x,1-\frac{\eps}{1+\eps}\right)\,.
\end{equation}
Hence, using \eqref{eq:sfgenaralprojection}, via the PM we  obtain a coarse grained  equation that is different from (\ref{example:Carsten'sexampleAvg}): 
\begin{equation}\label{example:Carsten'sexamplePM}
d\xp(t) = 0\,,\quad \xp(0)=x\,.
\end{equation}
Note that in this example the equation obtained by the PM does not depend on $\eps$, but this needs not be the case.
We will come back to this example in much more detail in Subsection \ref{sec: counterexample to Gyongy}, but we anticipate that even  in this relatively simple case not only averaging and PM do not give the same result; it is also the case that the dynamics \eqref{example:Carsten'sexamplePM} does not capture the long-time behaviour of \eqref{example:Carsten'sexample1eps}, so the PM fails here. Furthermore, as we will show in Subsection \ref{sec: counterexample to Gyongy}, for this example the coarse grained dynamics obtained via GM has some non-trivial features. 
\hfill{$\Box$}}
\end{Example}

Example \ref{example:short version} is a rather natural example, in the sense that, while being simple and lower dimensional, it still retains all the important features of some of the most popular models used e.g. in molecular dynamics, see e.g. Markovian approximations of the generalised Langevin equation \cite{kupferman2004fractional,ottobre2011asymptotic}. 

A closer inspection of this example also helps to show that, when trying to answer question \eqref{Q} a few issues compound. We spell out each of them here, relating them to the main results of this paper.

In system \eqref{eq:sf} there is an {\em explicit } scale separation. When considering system \eqref{eq:sfep=1} in the context of various projection-type methods it is often assumed that there is an {\em implicit} time-scale separation in the dynamics. To explain what is commonly meant  by this, suppose $\alpha=\beta=0$ and that 
$$
f(x,y)= Ax+ h(x,y), \quad g(x,y)= By + \tilde{h}(x,y) \, ,
$$
where $A,B$ are matrices with real eigenvalues and $h,\tilde{h}$ are perturbations,  in the sense  that the behaviour of the dynamics is governed by the linear parts described by $A$ and $B$, \cite[Section 4]{givon2004extracting}.  If the largest  eigenvalue of $B$, $\lambda_B$,  is e.g. negative and  (much) smaller than the smallest eigenvalue of $A$, $\lambda_A$,  then there is still a scale separation in the system - this is what we call an implicit scale separation. Ways of quantifying this implicit scale separation for more general systems have been suggested e.g. in \cite{legoll2010effective}, via log-Sobolev inequalities.  Note that if in \eqref{eq:sf} $f,g, \alpha, \beta$ are chosen as in the above then the eigenvalue dominating the dynamics of $Y_t\eep$ is $\lambda_B/\eps$, which is arbitrarily smaller than $\lambda_A$, for $\eps$ small enough. This is another way of seeing that the parameter $\eps$ governs the explicit scale separation in \eqref{eq:sf}. Whether the time-scale separation is explicit or implicit, the averaging approximation  still holds (it is just the case that the parameter $\eps$ allows one to quantify  the approximation  error as a function of the scale-separation). Hence, in systems with whichever form of scale separation, both  PM and averaging can be applied, so that it makes sense to compare them.  We compare such methods both in  the case in which the dynamics to which they are applied is   of the form \eqref{eq:sfep=1} with implicit scale separation and in the setting of models of the form \eqref{eq:sf}. 

\subsubsection*{Implicit scale separation}
To motivate question \eqref{Q}  we observe that, when system \eqref{eq:sfep=1} is reversible, i.e.~of the form \eqref{eq:sfgradient},  then  the family of equilibrium measures  $\{\rho^{(x)}\}_{x \in \R^d}$ of the fast process 
 coincides with the conditional distributions $\{\rho(\cdot \vert x)\}_{x\in\R^d}$. That is, 
 \begin{equation}\label{eqn:frozen=conditional}
    \rho^{(x)}(y) = \rho(y\vert x), \quad \mbox{for every } x \in \R^d. 
\end{equation} 
Indeed the auxiliary process associated to \eqref{eq:sfgradient} is given by 
$$
dy_t^{(x)}  = -\nabla_y V(x,y_t^{(x)})\,dt + \sqrt{2}\,dW_t\, .
$$
For each $x \in \R^d$, the above process is a Langevin diffusion in the $y$ variable; hence its equilibrium measure is 
$$
\rho^{(x)}(y) = \frac{e^{-V(x,y)}}{\int e^{-V(x,y)} \, dy }\,,
$$
 which coincides with the expression \eqref{cond meas gradient case} for $\rho_V(\cdot \vert x)$.   Hence in the gradient case, the coarse grained dynamics obtained via the PM and the one obtained via averaging coincide too. Setting $\varepsilon=1$ (or to any other number indeed) in  Example \ref{example:short version}, one can see that equality \eqref{eqn:frozen=conditional} does not hold in that case. This is consistent with the fact that our little example is non-reversible. One might therefore wonder whether equality \eqref{eqn:frozen=conditional} fails to hold because of lack of reversibility. In Section \ref{sec: rhox and rhox|y} we will show through examples and counterexamples that the lack of reversibility plays no role. We will in fact construct two non-reversible systems; one of them is such that $\rho^{(x)} \neq \rho(\cdot \vert x)$ while for the other these two families of measures coincide, see Example \ref{example: rhox not equal rhoygivenx} and Example \ref{example: rhox=rhoygivenx}.  In Section \ref{sec: rhox and rhox|y} we will show that  whether these two measures, and hence the respective associated approximations, coincide or not, depends crucially on the structure of  the kernels of the operators $\cL_x'$ and $\cL_y'$, duals of the generators associated to the slow and fast dynamics, respectively;  see \eqref{flat adjoint} and Proposition \ref{prop: equivalent conditions}.

From a broader perspective, we point out that it is not surprising that \eqref{eqn:frozen=conditional} does not always hold. Indeed, loosely speaking, the measure $\rho(\cdot \vert x)$ is obtained by first letting $t\rightarrow \infty$ in system \eqref{eq:sf} and then conditioning on the $x$-variable; while $\rho^{(x)}$ is obtained by first conditioning the dynamics for $Y_t$ with respect to $x$ and then letting $t\rightarrow \infty$. And of course the commutativity of the two operations is not guaranteed.

\subsubsection*{Explicit scale separation}
In order to apply the PM to \eqref{eq:sf} we need to calculate the associated ECD.   In this case the ECD may in general  depend on $\eps$, $\rho\eep(\cdot \vert x)$ while the measure $\rho^{(x)}$ never depends on $\eps$, by construction. So, in this case, a better way of phrasing question \eqref{Q} is to ask whether the measures $\rho\eep(\cdot \vert x)$ and $\rho^{(x)}$  coincide in the limit of scale separations, i.e.  whether the following  limit holds,  
\begin{equation}\label{limit scale separation}
    \lim_{\eps \rightarrow 0} \rho^{\eps}(y\vert x) = \rho^{(x)}(y) \, ,
\end{equation}
in some appropriate sense. 
Again, Example \ref{example:short version} shows that this needs not be the case.  
In Section \ref{sec: Carsten's section} we look at this limit through a formal asymptotic expansion in $\eps$ and  give sufficient conditions under which this limit holds. 

Finally, in the presence of scale separation one is led to study the commutativity of the diagram in Figure \ref{fig:diagram}. In this paper we do not address the full study of the conditions under which this diagram is fully commutative. However we make a few remarks on the diagram (which relate to the main results of the paper).

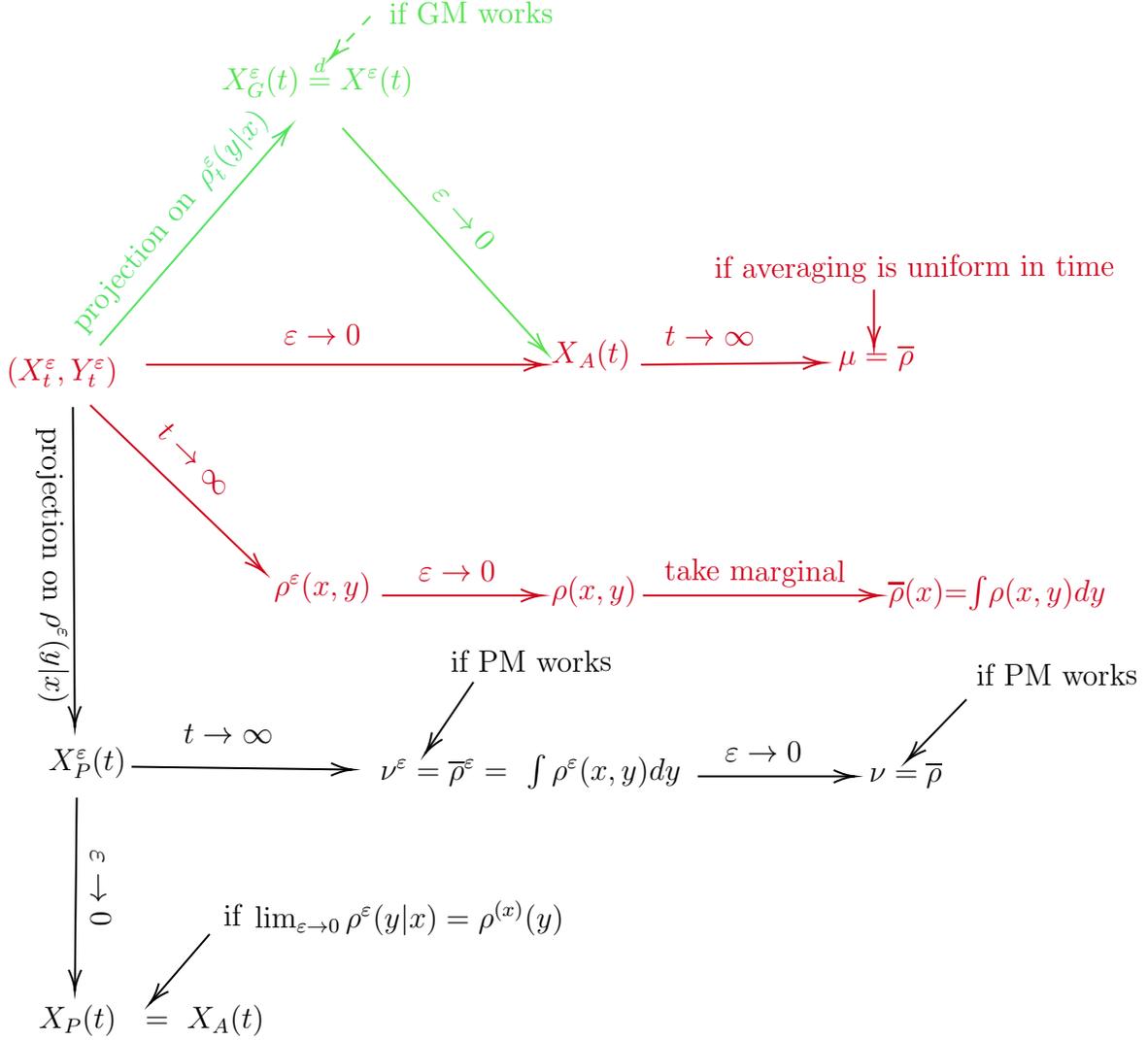
\begin{figure}
    \centering
    
\tikzset{every picture/.style={line width=0.75pt}} 

\begin{tikzpicture}[x=0.75pt,y=0.75pt,yscale=-1,xscale=1]

\draw [color={rgb, 255:red, 208; green, 2; blue, 27 }  ,draw opacity=1 ]   (118,191) -- (323,191) ;
\draw [shift={(325,191)}, rotate = 180] [color={rgb, 255:red, 208; green, 2; blue, 27 }  ,draw opacity=1 ][line width=0.75]    (10.93,-3.29) .. controls (6.95,-1.4) and (3.31,-0.3) .. (0,0) .. controls (3.31,0.3) and (6.95,1.4) .. (10.93,3.29)   ;
\draw [color={rgb, 255:red, 208; green, 2; blue, 27 }  ,draw opacity=1 ]   (375,191) -- (468,190.02) ;
\draw [shift={(470,190)}, rotate = 179.4] [color={rgb, 255:red, 208; green, 2; blue, 27 }  ,draw opacity=1 ][line width=0.75]    (10.93,-3.29) .. controls (6.95,-1.4) and (3.31,-0.3) .. (0,0) .. controls (3.31,0.3) and (6.95,1.4) .. (10.93,3.29)   ;
\draw [color={rgb, 255:red, 208; green, 2; blue, 27 }  ,draw opacity=1 ]   (89,212) -- (178.06,297.61) ;
\draw [shift={(179.5,299)}, rotate = 223.87] [color={rgb, 255:red, 208; green, 2; blue, 27 }  ,draw opacity=1 ][line width=0.75]    (10.93,-3.29) .. controls (6.95,-1.4) and (3.31,-0.3) .. (0,0) .. controls (3.31,0.3) and (6.95,1.4) .. (10.93,3.29)   ;
\draw [color={rgb, 255:red, 208; green, 2; blue, 27 }  ,draw opacity=1 ]   (240.5,311) -- (321.5,311) ;
\draw [shift={(323.5,311)}, rotate = 180] [color={rgb, 255:red, 208; green, 2; blue, 27 }  ,draw opacity=1 ][line width=0.75]    (10.93,-3.29) .. controls (6.95,-1.4) and (3.31,-0.3) .. (0,0) .. controls (3.31,0.3) and (6.95,1.4) .. (10.93,3.29)   ;
\draw    (80,213) -- (80.99,377.5) ;
\draw [shift={(81,379.5)}, rotate = 269.66] [color={rgb, 255:red, 0; green, 0; blue, 0 }  ][line width=0.75]    (10.93,-3.29) .. controls (6.95,-1.4) and (3.31,-0.3) .. (0,0) .. controls (3.31,0.3) and (6.95,1.4) .. (10.93,3.29)   ;
\draw [color={rgb, 255:red, 0; green, 0; blue, 0 }  ,draw opacity=1 ]   (110.5,400) -- (223,401.47) ;
\draw [shift={(225,401.5)}, rotate = 180.75] [color={rgb, 255:red, 0; green, 0; blue, 0 }  ,draw opacity=1 ][line width=0.75]    (10.93,-3.29) .. controls (6.95,-1.4) and (3.31,-0.3) .. (0,0) .. controls (3.31,0.3) and (6.95,1.4) .. (10.93,3.29)   ;
\draw    (82,416) -- (81.02,514.5) ;
\draw [shift={(81,516.5)}, rotate = 270.57] [color={rgb, 255:red, 0; green, 0; blue, 0 }  ][line width=0.75]    (10.93,-3.29) .. controls (6.95,-1.4) and (3.31,-0.3) .. (0,0) .. controls (3.31,0.3) and (6.95,1.4) .. (10.93,3.29)   ;
\draw [color={rgb, 255:red, 80; green, 227; blue, 94 }  ,draw opacity=1 ]   (94,181) -- (191.69,68.51) ;
\draw [shift={(193,67)}, rotate = 130.97] [color={rgb, 255:red, 80; green, 227; blue, 94 }  ,draw opacity=1 ][line width=0.75]    (10.93,-3.29) .. controls (6.95,-1.4) and (3.31,-0.3) .. (0,0) .. controls (3.31,0.3) and (6.95,1.4) .. (10.93,3.29)   ;
\draw [color={rgb, 255:red, 87; green, 227; blue, 80 }  ,draw opacity=1 ]   (220,68) -- (324.66,183.52) ;
\draw [shift={(326,185)}, rotate = 227.82] [color={rgb, 255:red, 87; green, 227; blue, 80 }  ,draw opacity=1 ][line width=0.75]    (10.93,-3.29) .. controls (6.95,-1.4) and (3.31,-0.3) .. (0,0) .. controls (3.31,0.3) and (6.95,1.4) .. (10.93,3.29)   ;
\draw [color={rgb, 255:red, 87; green, 227; blue, 80 }  ,draw opacity=1 ] [dash pattern={on 4.5pt off 4.5pt}]  (234.5,10) -- (214.88,30.55) ;
\draw [shift={(213.5,32)}, rotate = 313.67] [color={rgb, 255:red, 87; green, 227; blue, 80 }  ,draw opacity=1 ][line width=0.75]    (10.93,-3.29) .. controls (6.95,-1.4) and (3.31,-0.3) .. (0,0) .. controls (3.31,0.3) and (6.95,1.4) .. (10.93,3.29)   ;
\draw [color={rgb, 255:red, 208; green, 2; blue, 27 }  ,draw opacity=1 ]   (376,311) -- (497,311) ;
\draw [shift={(499,311)}, rotate = 180] [color={rgb, 255:red, 208; green, 2; blue, 27 }  ,draw opacity=1 ][line width=0.75]    (10.93,-3.29) .. controls (6.95,-1.4) and (3.31,-0.3) .. (0,0) .. controls (3.31,0.3) and (6.95,1.4) .. (10.93,3.29)   ;
\draw [color={rgb, 255:red, 208; green, 2; blue, 27 }  ,draw opacity=1 ]   (496,152) -- (496,180) ;
\draw [shift={(496,182)}, rotate = 270] [color={rgb, 255:red, 208; green, 2; blue, 27 }  ,draw opacity=1 ][line width=0.75]    (10.93,-3.29) .. controls (6.95,-1.4) and (3.31,-0.3) .. (0,0) .. controls (3.31,0.3) and (6.95,1.4) .. (10.93,3.29)   ;
\draw    (288,356) -- (264.12,391.34) ;
\draw [shift={(263,393)}, rotate = 304.05] [color={rgb, 255:red, 0; green, 0; blue, 0 }  ][line width=0.75]    (10.93,-3.29) .. controls (6.95,-1.4) and (3.31,-0.3) .. (0,0) .. controls (3.31,0.3) and (6.95,1.4) .. (10.93,3.29)   ;
\draw [color={rgb, 255:red, 0; green, 0; blue, 0 }  ,draw opacity=1 ]   (404.5,405) -- (484,405) ;
\draw [shift={(486,405)}, rotate = 180] [color={rgb, 255:red, 0; green, 0; blue, 0 }  ,draw opacity=1 ][line width=0.75]    (10.93,-3.29) .. controls (6.95,-1.4) and (3.31,-0.3) .. (0,0) .. controls (3.31,0.3) and (6.95,1.4) .. (10.93,3.29)   ;
\draw    (544,365) -- (514.39,395.56) ;
\draw [shift={(513,397)}, rotate = 314.09] [color={rgb, 255:red, 0; green, 0; blue, 0 }  ][line width=0.75]    (10.93,-3.29) .. controls (6.95,-1.4) and (3.31,-0.3) .. (0,0) .. controls (3.31,0.3) and (6.95,1.4) .. (10.93,3.29)   ;
\draw    (151,487) -- (121.31,521.48) ;
\draw [shift={(120,523)}, rotate = 310.73] [color={rgb, 255:red, 0; green, 0; blue, 0 }  ][line width=0.75]    (10.93,-3.29) .. controls (6.95,-1.4) and (3.31,-0.3) .. (0,0) .. controls (3.31,0.3) and (6.95,1.4) .. (10.93,3.29)   ;

\draw (44,185.4) node [anchor=north west, inner sep=0.75pt] 
    {$\textcolor[rgb]{0.82,0.01,0.11}{\left( X_t^{\varepsilon}, Y_t^{\varepsilon} \right)}$};

\draw (188,168.4) node [anchor=north west][inner sep=0.75pt]  [color={rgb, 255:red, 208; green, 2; blue, 27 }  ,opacity=1 ]  {$\varepsilon \rightarrow 0$};
\draw (327,176.4) node [anchor=north west][inner sep=0.75pt]  [color={rgb, 255:red, 208; green, 2; blue, 27 }  ,opacity=1 ]  {$\textcolor[rgb]{0.82,0.01,0.11}{X}\textcolor[rgb]{0.82,0.01,0.11}{_{A}}\textcolor[rgb]{0.82,0.01,0.11}{(}\textcolor[rgb]{0.82,0.01,0.11}{t}\textcolor[rgb]{0.82,0.01,0.11}{)}$};
\draw (386,169.4) node [anchor=north west][inner sep=0.75pt]  [color={rgb, 255:red, 208; green, 2; blue, 27 }  ,opacity=1 ]  {$t\rightarrow \infty $};
\draw (476,179.4) node [anchor=north west][inner sep=0.75pt]  [color={rgb, 255:red, 74; green, 144; blue, 226 }  ,opacity=1 ]  {$\textcolor[rgb]{0.82,0.01,0.11}{\mu =\ }\textcolor[rgb]{0.82,0.01,0.11}{\overline{\rho }}$};
\draw (129.31,217.39) node [anchor=north west][inner sep=0.75pt]  [color={rgb, 255:red, 208; green, 2; blue, 27 }  ,opacity=1 ,rotate=-44.47,xslant=-0.14]  {$t\rightarrow \infty $};
\draw (183,297.4) node [anchor=north west][inner sep=0.75pt]  [color={rgb, 255:red, 208; green, 2; blue, 27 }  ,opacity=1 ]  {$\rho ^{\varepsilon }( x,y)$};
\draw (258,291.4) node [anchor=north west][inner sep=0.75pt]  [color={rgb, 255:red, 208; green, 2; blue, 27 }  ,opacity=1 ]  {$\varepsilon \rightarrow 0$};
\draw (327,299.4) node [anchor=north west][inner sep=0.75pt]  [color={rgb, 255:red, 208; green, 2; blue, 27 }  ,opacity=1 ]  {$\textcolor[rgb]{0.82,0.01,0.11}{\rho }\textcolor[rgb]{0.82,0.01,0.11}{(}\textcolor[rgb]{0.82,0.01,0.11}{x,y}\textcolor[rgb]{0.82,0.01,0.11}{)}$};
\draw (66,388.4) node [anchor=north west][inner sep=0.75pt]    {$X_{P}^{\varepsilon }( t)$};
\draw (79.89,318.09) node [anchor=north west][inner sep=0.75pt]  [rotate=-90.63]  {$\rho ^{\varepsilon }( y|x)$};
\draw (76.06,221.78) node [anchor=north west][inner sep=0.75pt]  [rotate=-89.52] [align=left] {projection on};
\draw (238,392.4) node [anchor=north west][inner sep=0.75pt]    {$\nu ^{\varepsilon } =\overline{\rho }^{\varepsilon } =\ \int \rho ^{\varepsilon }( x,y) dy$};
\draw (100.94,441.44) node [anchor=north west][inner sep=0.75pt]  [color={rgb, 255:red, 0; green, 0; blue, 0 }  ,opacity=1 ,rotate=-89.58]  {$\varepsilon \rightarrow 0$};
\draw (61,522.4) node [anchor=north west][inner sep=0.75pt]    {$X_{P}( t) \ \ =\ X_{A}( t) \ \ \ \ $};
\draw (136,376.4) node [anchor=north west][inner sep=0.75pt]  [color={rgb, 255:red, 0; green, 0; blue, 0 }  ,opacity=1 ]  {$t\rightarrow \infty $};
\draw (156,28.4) node [anchor=north west][inner sep=0.75pt]  [color={rgb, 255:red, 80; green, 227; blue, 95 }  ,opacity=1 ]  {$\textcolor[rgb]{0.32,0.89,0.31}{X_{G}^{\varepsilon }( t)\overset{d}{=} X^{\varepsilon }( t) \ \ \ \ }$};
\draw (75.4,167.84) node [anchor=north west][inner sep=0.75pt]  [color={rgb, 255:red, 80; green, 227; blue, 100 }  ,opacity=1 ,rotate=-310.28] [align=left] {projection on};
\draw (138.36,90.82) node [anchor=north west][inner sep=0.75pt]  [color={rgb, 255:red, 80; green, 227; blue, 120 }  ,opacity=1 ,rotate=-313.04]  {$\rho _{t}^{\varepsilon }( y|x)$};
\draw (273.9,93.88) node [anchor=north west][inner sep=0.75pt]  [color={rgb, 255:red, 80; green, 227; blue, 84 }  ,opacity=1 ,rotate=-48.38]  {$\varepsilon \rightarrow 0$};
\draw (242.66,1.05) node [anchor=north west][inner sep=0.75pt]  [color={rgb, 255:red, 81; green, 227; blue, 80 }  ,opacity=1 ,rotate=-359.77] [align=left] {if GM works};
\draw (502,299.4) node [anchor=north west][inner sep=0.75pt]  [color={rgb, 255:red, 74; green, 144; blue, 226 }  ,opacity=1 ]  {$\textcolor[rgb]{0.82,0.01,0.11}{\overline{\rho }}\textcolor[rgb]{0.82,0.01,0.11}{(}\textcolor[rgb]{0.82,0.01,0.11}{x}\textcolor[rgb]{0.82,0.01,0.11}{)}\textcolor[rgb]{0.82,0.01,0.11}{=}\textcolor[rgb]{0.82,0.01,0.11}{\int }\textcolor[rgb]{0.82,0.01,0.11}{\rho }\textcolor[rgb]{0.82,0.01,0.11}{(}\textcolor[rgb]{0.82,0.01,0.11}{x,y}\textcolor[rgb]{0.82,0.01,0.11}{)}\textcolor[rgb]{0.82,0.01,0.11}{dy}$};
\draw (385.79,290.12) node [anchor=north west][inner sep=0.75pt]  [color={rgb, 255:red, 208; green, 2; blue, 27 }  ,opacity=1 ,rotate=-0.64] [align=left] {\textcolor[rgb]{0.82,0.01,0.11}{take marginal}};
\draw (411.64,133.2) node [anchor=north west][inner sep=0.75pt]  [color={rgb, 255:red, 208; green, 2; blue, 27 }  ,opacity=1 ,rotate=-359.65] [align=left] {if averaging is uniform in time};
\draw (274.64,338.2) node [anchor=north west][inner sep=0.75pt]  [color={rgb, 255:red, 0; green, 0; blue, 0 }  ,opacity=1 ,rotate=-359.65] [align=left] {if PM works};
\draw (417,385.4) node [anchor=north west][inner sep=0.75pt]  [color={rgb, 255:red, 0; green, 0; blue, 0 }  ,opacity=1 ]  {$\varepsilon \rightarrow 0$};
\draw (492,396.4) node [anchor=north west][inner sep=0.75pt]  [color={rgb, 255:red, 0; green, 0; blue, 0 }  ,opacity=1 ]  {$\nu =\overline{\rho }$};
\draw (547.64,345.2) node [anchor=north west][inner sep=0.75pt]  [color={rgb, 255:red, 0; green, 0; blue, 0 }  ,opacity=1 ,rotate=-359.65] [align=left] {if PM works};
\draw (156.64,470.2) node [anchor=north west][inner sep=0.75pt]  [color={rgb, 255:red, 0; green, 0; blue, 0 }  ,opacity=1 ,rotate=-359.65] [align=left] {if };
\draw (173,468.4) node [anchor=north west][inner sep=0.75pt]  [color={rgb, 255:red, 0; green, 0; blue, 0 }  ,opacity=1 ]  {$\lim _{\varepsilon \rightarrow 0} \rho ^{\varepsilon }( y|x) =\rho ^{( x)}( y)$};
\end{tikzpicture}
    \caption{Relations and dependencies between Gyöngy method (GM), projection method (PM) and averaging of slow-fast systems.}
    \label{fig:diagram}
\end{figure}
The commutativity of the red part of diagram in Figure \ref{fig:diagram} is by now well understood. In particular if  averaging is uniform in time then the limits $\varepsilon \rightarrow 0$ and $t\rightarrow \infty$ commute. Sufficient conditions for the averaging approximation to hold on infinite time horizons have been studied in \cite{crisan2022poisson} and in \cite{schuh2024conditions}. 

Looking at the green part of the diagram, if we apply the GM to a dynamics $Z_t\eep=(X_t\eep, Y_t\eep)$ with explicit time-scale separation as in \eqref{eq:sf},  then the law at time $t$ of $Z_t\eep$ depends on $\varepsilon$. Hence the same is true for the conditional law at time $t$, needed to apply the GM,  $\rho\eep_t(\cdot \vert x)$  and, in turn, for the dynamics produced via the GM,  $X_G\eep$. Assuming the GM succeeds in its intent - which will happen under the conditions of Proposition \ref{Prop: extension gyongy} - then  $X_G\eep(t)\stackrel{d}{=}X_t\eep$ (where $\stackrel{d}{=}$ is for equality in law). Letting now $\varepsilon$ to zero then necessarily produces a dynamics which coincides with the averaging approximation $X_A$.

Finally, following the black arrows,  starting from $(X_t\eep, Y_t\eep)$,  we can first apply the projection method - projecting the equation for $X_t\eep$ on the measure $\rho\eep(\cdot \vert x)$. Once the resulting dynamics $X_P\eep$ is obtained we can again let $\varepsilon$ to zero first and then $t\rightarrow \infty$ or vice versa. Letting $t$ to infinity first, and assuming $X_P\eep$ does converge to some limiting law $\nu\eep$,  if the PM has produced a dynamics with the correct long time behaviour (which is guaranteed under the assumptions of Theorem \ref{thm:projection method works}), then $\nu\eep=\bar\rho\eep \rightarrow \bar\rho $ as $\varepsilon \rightarrow 0$. If instead we let $\varepsilon$ to zero first, then the resulting process $X_P$ will coincide with the averaged dynamics $X_A$ if and only if $\lim_{\varepsilon \rightarrow 0} \rho\eep(\cdot \vert x) = \rho^{(x)}(\cdot)$. We study this limit in Section \ref{sec: Carsten's section}.
\\
We conjecture that the same conditions under which uniform in time averaging holds, are sufficient to guarantee commutativity of the whole diagram. But we do not address this matter here.



%

\subsubsection*{Outline}
With this in mind, the rest of the paper is organised as follows. In Section \ref{sec: sec 3} we first give more background on the GM and prove Proposition \ref{Prop: extension gyongy}, which is a slight extension of the results in \cite{gyongy1986mimicking}, and then provide the heuristic proof of Theorem \ref{thm:projection method works}. In Section \ref{sec: rhox and rhox|y} we study the relation between the invariant measure of the auxiliary dynamics and the ECD; specifically, in Section \ref{sec: rhox equals rhox|y} we provide sufficient conditions under which  equality \eqref{eqn:frozen=conditional} holds, see Proposition \ref{prop: equivalent conditions}, and then show examples of systems where such an equality does not hold; in Section \ref{sec: Carsten's section} we identify conditions under which the PM and the averaging approximation coincide in the limit of time scale separation -- that is, we study the convergence of $\rho\eep(\cdot \vert x)$ to $\rho^{(x)}$ as $\varepsilon\rightarrow 0$. In Section \ref{sec: statement and proof of Thm 6.2} we first give background on the long time behaviour of non-autonomous SDEs and slightly generalise the results of \cite{kunze2010nonautonomous, cass2021long}; then provide the rigorous statement and proof of Theorem \ref{thm:projection method works}. Section \ref{sec: statement and proof of Thm 6.2} also contains a more thorough discussion of Example \ref{example:short version}, see Subsection \ref{sec: counterexample to Gyongy}.

  
\section{Heuristics on the proof of Theorem \ref{thm:projection method works} and some results regarding the Gy\"ongy method}\label{sec: sec 3}
In this section we discuss two different approaches to studying the asymptotic properties of the coarse grained dynamics \eqref{eq:sfgenaralprojection} obtained from \eqref{eq:sfep=1} via the PM. The first, described  in Subsection \ref{sec:first approach} and summarised in Proposition \ref{lem:bar rho invariant for Lp}, is substantially known, in the sense that Proposition \ref{lem:bar rho invariant for Lp} is  special case of \cite[Proposition 4]{zhang2016effective} (when the coarse-graining map $\xi$ in that paper corresponds to simple projection  on the $x$-variable).  We recap it here for context and because the calculation in its proof will be useful for later discussion. The second, presented in Subsection \ref{sec:heuristic explanation of theorem}, is instead new to this paper and it is the one leading to Theorem \ref{thm:projection method works}. 
In particular, 
Subsection \ref{sec:heuristic explanation of theorem} contains a heuristic explanation of Theorem \ref{thm:projection method works} and of its method of proof. The actual proof is postponed to Section \ref{sec: statement and proof of Thm 6.2}.   Subsection \ref{subsec:proof gyongy} contains instead some results on the GM which are used in the actual proof of Theorem \ref{thm:projection method works}   -- see in particular  Proposition \ref{Prop: extension gyongy} which is a slight extension of the results of Gy\"ongy,   and  Note \ref{note: on gyongy extension}, where we comment on the relationship between our Proposition \ref{Prop: extension gyongy} and the results of \cite{gyongy1986mimicking}.

\subsection{First Approach}\label{sec:first approach}

Here we show that the process $X_P$ constructed via the PM has the desired invariant measure.  
Before stating the next proposition we recall that a measure $\mu$ is (infinitesimally) invariant for a Markov dynamics say on $\R^n$ generated by an operator $\mathfrak L$ (in short, invariant for $\mathfrak L$) if 
\begin{equation}\label{def:infinitesimally invariant}
    \int_{\R^n} (\mathfrak L f)(x) \mu(dx) = 0 
\end{equation}
for every $f$ in the domain of $\mathfrak L$, see e.g. \cite{lorenzi2006analytical}. Moreover, 
the infinitesimal generator $\cL$ of system \eqref{eq:sfep=1} is the operator defined on smooth functions as 
\begin{equation}\label{eq:gen}
    \cL=\cL_x + \cL_y
\end{equation}
where $\cL_x$ and $\cL_y$ are, respectively, the generators associated to the slow and fast component of the dynamics \eqref{eq:sf}; that is, 
\begin{subequations}
    \begin{alignat}{2}\label{eq:genx}
    \cL_x & = \frac{1}{2}\alpha\alpha^T\colon\nabla_x^2 && + f\cdot\nabla_x\\\label{eq:geny}
    \cL_y & = \frac{1}{2}\beta\beta^T\colon\nabla_y^2 && + g\cdot\nabla_y \,.
    \end{alignat}
\end{subequations}
The (flat) $L^2$-adjoint of $\cL$ is denoted by  $\cL'$ and it is given by 
\begin{equation}\label{flat adjoint}
    \cL' = \cL_x' + \cL_y'\,.
\end{equation}
Note that, because of the structure of the noise in \eqref{eq:sfep=1},  $\cL_x$ and $\cL'_x$ contain only derivatives with respect to $x$, whereas $\cL_y$ and $\cL'_y$ are differential operators in $y$ only.

We will also use the fact that the generator of the  evolution \eqref{eq:sfgenaralprojection}, is  given by the second order differential operator $\cL_P$ defined as 
\begin{equation}\label{eqn:generator Lp}
\cL_P = \frac12 \alpha_P(x) :\nabla_x^2 + f_P(x) \cdot \nabla_x \, .
\end{equation}
\begin{prop}
\label{lem:bar rho invariant for Lp}
    Let $\cL_P$ be the generator of the dynamics \eqref{eq:sfgenaralprojection}, defined as  in \eqref{eqn:generator Lp},  and  $\cL$ be the generator of  \eqref{eq:sfep=1}, defined  as in \eqref{eq:gen}. Assume $\cL$ is uniformly elliptic, i.e. that \eqref{eqn:unif_ellipticity_initial_system} holds, 
    with smooth coefficients and that  $\rho$ is a probability measure on $\R^n$, infinitesimally invariant for  $\mathcal L$.  Then $\bar\rho$ (the marginal of $\rho$ defined  in \eqref{eqn:bar rho def}) is infinitesimally invariant for $\cL_P$; that is,  
    \begin{equation}\label{eqn:bar rho invariant for Lp}
        \int_{\R^d} (\cL_P h)(x) \bar{\rho}(x) \, dx = 0 
    \end{equation}
     for every $h$ in the domain of $\cL_P$. 
    \end{prop}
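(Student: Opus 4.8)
The plan is to unfold the definition of infinitesimal invariance for $\cL_P$ and reduce the claim to the known invariance of $\rho$ for $\cL$. Starting from the left-hand side of \eqref{eqn:bar rho invariant for Lp}, I would write, for $h$ in the domain of $\cL_P$,
\begin{equation*}
\int_{\R^d} (\cL_P h)(x)\,\bar\rho(x)\,dx = \int_{\R^d}\left(\frac12 \alpha_P(x):\nabla_x^2 h(x) + f_P(x)\cdot\nabla_x h(x)\right)\bar\rho(x)\,dx,
\end{equation*}
and then substitute the definitions \eqref{eqn:projection-coefficients}--\eqref{eqn:projection-coefficients2} of $f_P$ and $\alpha_P$ together with $\bar\rho(x)\rho(y\vert x) = \rho(x,y)$ from \eqref{eqn:conditional for system 1}. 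This turns the integral over $\R^d$ against $\bar\rho$ into an integral over $\R^n$ against $\rho$: the term $\alpha_P(x):\nabla_x^2 h(x)\,\bar\rho(x)$ becomes $\int_{\R^{n-d}}(\alpha\alpha^T)(x,y):\nabla_x^2 h(x)\,\rho(x,y)\,dy$, and similarly for the drift term. Hence
\begin{equation*}
\int_{\R^d} (\cL_P h)(x)\,\bar\rho(x)\,dx = \int_{\R^n}\left(\frac12 (\alpha\alpha^T)(x,y):\nabla_x^2 h(x) + f(x,y)\cdot\nabla_x h(x)\right)\rho(x,y)\,dx\,dy.
\end{equation*}

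Next I would observe that the integrand on the right is exactly $(\cL_x \tilde h)(x,y)\,\rho(x,y)$, where $\tilde h(x,y) := h(x)$ is $h$ viewed as a function on $\R^n$ that is constant in $y$; indeed $\cL_x$ from \eqref{eq:genx} involves only $x$-derivatives, so $\cL_x\tilde h(x,y) = \frac12(\alpha\alpha^T)(x,y):\nabla_x^2 h(x) + f(x,y)\cdot\nabla_x h(x)$. Moreover, since $\tilde h$ does not depend on $y$, we have $\cL_y \tilde h \equiv 0$ by \eqref{eq:geny}, so $\cL_x\tilde h = \cL\tilde h$ with $\cL$ the full generator \eqref{eq:gen}. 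Therefore the whole expression equals $\int_{\R^n}(\cL\tilde h)(x,y)\,\rho(x,y)\,dx\,dy$, which vanishes by the assumed infinitesimal invariance \eqref{def:infinitesimally invariant} of $\rho$ for $\cL$ — provided $\tilde h$ lies in the domain of $\cL$.

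The main obstacle, and the only genuinely non-routine point, is this domain/regularity issue: one must justify that $\tilde h = h\otimes 1$ belongs to the domain of $\cL$ (or at least to a class of test functions for which \eqref{def:infinitesimally invariant} is valid), and that all the Fubini interchanges and the rewriting of $\bar\rho$-integrals as $\rho$-integrals are legitimate. This is where the standing assumptions are used: uniform ellipticity \eqref{eqn:unif_ellipticity_initial_system} and smoothness of the coefficients give the strictly positive smooth density $\rho$ with enough integrability that the infinitesimal invariance identity extends from a core to functions like $\tilde h$ with controlled growth; one typically argues by an approximation/cutoff procedure, multiplying $\tilde h$ by smooth compactly supported cutoffs $\chi_R(x)$, applying \eqref{def:infinitesimally invariant} to $\chi_R \tilde h$, and passing to the limit $R\to\infty$ using dominated convergence and the decay of $\rho$. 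I would either carry out this cutoff argument explicitly or, more economically, cite the relevant background on infinitesimally invariant measures for elliptic operators (e.g.\ \cite{lorenzi2006analytical}) to assert that the identity holds for the relevant class of $h$. Apart from that, the proof is a direct computation: substitute, apply Fubini, recognize $\cL_x$, note $\cL_y$ kills $y$-independent functions, and invoke invariance of $\rho$.
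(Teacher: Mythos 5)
Your proposal is correct and follows essentially the same route as the paper: unfold $f_P,\alpha_P$, use $\rho(x,y)=\rho(y\vert x)\bar\rho(x)$ to turn the $\bar\rho$-integral into a $\rho$-integral, recognize the integrand as $\cL_x(h\otimes\mathbf 1)=\cL(h\otimes\mathbf 1)$, and invoke the invariance of $\rho$ for $\cL$. The only difference is how the domain issue is dispatched: the paper first reduces to $h\in C_c^\infty(\R^d)$ via \cite[Corollary 8.1.7]{lorenzi2006analytical} (so that $h\otimes\mathbf 1$ is immediately an admissible test function), which is precisely the ``more economical'' citation route you mention as an alternative to the cutoff argument.
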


 Let us point out that this proposition  only relates the stationary states of  $\cL_P$ to the stationary states of $\cL$, but it gives no information on the dynamics. The  line of thought that we will present in the Subsection \ref{sec:heuristic explanation of theorem} is more informative of the dynamics as well.   See also Subsection \ref{sec: counterexample to Gyongy} on this point. 
\begin{proof}[Proof of Proposition  \ref{lem:bar rho invariant for Lp}]
First note that if $\cL$ is uniformly elliptic and with smooth coefficients then any invariant measure for $\cL$ has a smooth density and is strictly positive. Such properties are then inherited by both the marginal $\bar \rho$ and the ECD $\rho(\cdot\vert x)$.  With a straightforward calculation this implies that also $\cL_P$ is uniformly elliptic. \footnote{To see this, letting $v=(w,u)\in \R^d\times\R^{n-d}$, from \eqref{eqn:unif_ellipticity_initial_system} we deduce $\langle \alpha\alpha^T(x,y)w, w\rangle\geq \zeta |w|^2$ for every $(x,y) \in \R^d\times\R^{n-d}, w \in \R^d$.  Using this fact,  the expression for $\alpha_P$ and \eqref{eqn:conditional is a prob measure } the uniform ellipticity of $\cL_P$ follows. }

With this premise, to prove the statement  it suffices to verify that \eqref{eqn:bar rho invariant for Lp} holds for every $h \in C_c^{\infty}(\R^d)$, see \cite[Corollary 8.1.7]{lorenzi2006analytical}. To this end,   using the fact that $\rho(\cdot\vert x)$ is a probability measure for every $x \in \R^d$, i.e.
\begin{equation}\label{eqn:conditional is a prob measure }
    \int_{\R^{n-d}}\rho(y\vert x) dy = 1 \qquad \mbox{for every } x \in \R^d \, 
\end{equation}
and the following basic relation between joint, conditional and marginal distribution
\begin{equation}\label{eqn:joint=conditional times marginal}
    \rho(x,y) = \rho(y\vert x) \bar{\rho}(x) \, , 
\end{equation}
    we have (writing everything, to ease notation and  without loss of generality,  with $d=n-d=1$ ):
    \begin{align*}
        \int_{\R^d} (\cL_P h)(x) \bar{\rho}(x) dx  &= \int_{\R^d} f_P(x)  (\partial_x h) \bar\rho(x) dx+\frac{1}{2}\int_{\R^d} (\alpha_P(x)\partial_{xx}h) \bar\rho(x) dx\\   &\stackrel{\eqref{eqn:projection-coefficients}, \eqref{eqn:conditional is a prob measure }}{=}
        \int_{\R^d}  (\partial_xh)\bar{\rho}(x) \,dx\int_{\R^{n-d}}  f(x,y) \rho(y\vert x)\, dy \\
        & + \frac{1}{2}\int_{\R^d} (\partial_{xx}h) \bar\rho(x) dx \int_{\R^{n-d}}  
 \alpha^2(x,y) \rho(y\vert x) dy\\
        & \stackrel{\eqref{eqn:joint=conditional times marginal}}{=} \int_{\R^n} f(x,y) \rho(x,y) \pa_xh \,  dx dy+ 
        \frac{1}{2}\int_{\R^n}   \rho(x,y) \alpha^2(x,y) \pa_{xx} h \, dx dy \\
        & = \int_{\R^n} (\cL_x h) (x,y) \rho(x,y) dx dy, 
    \end{align*}
    where $\cL_x$ is as in \eqref{eq:genx}. 
    We now want to show  that the RHS of the above is equal to zero. 
    Using  \eqref{eq:gen}  and  the invariance of $\rho$ under $\cL$,   we can write
    \begin{align*}
   0=  \int_{\R^{n}} (\cL \varphi)(x,y) \rho(x,y) dxdy & = \int_{\R^{n}} (\cL_x \varphi)(x,y) \rho(x,y) dxdy\\
    &+ \int_{\R^{n}} (\cL_y \varphi)(x,y) \rho(x,y) dxdy \,, 
    \end{align*}
    for every $\varphi$ in the domain of $\cL$. 
Functions $\varphi$ of the form $\varphi(x,y)=h(x) \mathbf{1}(y)$ with $h \in C_c^{\infty}(\R^d)$  and  $\mathbf 1(y)=1$ for every $y$,  do belong to the domain of $\cL$. If we apply the above to such functions then, since $\cL_y$ is a differential operator in the $y$-variable only, the second addend on the RHS of the above vanishes, and we are left with 
    \begin{equation}\label{zero on functions of x}
        \int_{\R^{n}} (\cL h)(x,y) \rho(x,y) dxdy = \int_{\R^{n}} (\cL_x h)(x,y) \rho(x,y) dxdy = 0 \,.
    \end{equation}
This concludes the proof. 
\end{proof}

\subsection{Heuristic explanation of Theorem \ref{thm:projection method works}}\label{sec:heuristic explanation of theorem} 

Here we explain how the Gy\"ongy approximation $X_G$ can be instrumental in proving the asymptotic properties of the process $X_P$.
We explain here the general reasoning, which is the backbone of the proof of Theorem \ref{thm:projection method works}, unencumbered by technical matters. Comments on technical aspects are contained in Note \ref{noteappendix:note on assumption non autonomous SDEs} 
and Note \ref{note:on theor3_5}. 

We start by noting that the evolution  \eqref{eqn:gyongy-coarse-graining}  is non-autonomous, as the coefficients $f_G(t,x)$ and $\alpha_G(t,x)$ depend explicitly on time through the conditional law $\rho_t(\cdot\vert x)$. On this point we  clarify that in principle one could also regard  \eqref{eqn:gyongy-coarse-graining} as a non-linear SDE of McKean-Vlasov type. Whether we interpret \eqref{eqn:gyongy-coarse-graining} as being non-autonomous (and linear in the sense of McKean) or a form of non-linear McKean Vlasov evolution depends on whether we consider $\rho_t(\cdot \vert x)$ to be part of the unknown or not. Indeed in principle one can obtain the conditional distribution $\rho_t(\cdot\vert x)$ by solving system \eqref{eq:sfep=1} and then use it to calculate the coefficients $f_G(t,x), \alpha_G(t,x)$. With this point of view one can regard \eqref{eqn:gyongy-coarse-graining} as a non-autonomous evolution (linear in the sense of McKean).  If we were to view $\rho_t(\cdot\vert x)$ as an unknown in \eqref{eqn:gyongy-averaged-coefficients}-\eqref{eqn:gyongy-averaged-coefficients2},  then \eqref{eqn:gyongy-coarse-graining} would be non-linear in the sense of  McKean. This might prove to be a fruitful point of view for simulations, but it is not the one that we take in the reasoning to come and in the proof of Theorem \ref{thm:projection method works} - in what comes next we use the  results of \cite{angiuli2013hypercontractivity, cass2021long} in an essential way and such results are not stated for McKean-Vlasov type equations, so they would not apply. 

Let us now move on to informally describing the results of \cite{angiuli2013hypercontractivity, cass2021long}, see in particular \cite[Section 6]{angiuli2013hypercontractivity} and \cite[Section 6]{cass2021long}. 

Consider an SDE in $\R^d$ of the form 
\begin{align}
    d\cX_t &= b(t, \cX_t) dt + \sigma(t, \cX_t) dU_t \,, 
    \label{generalSDEnon-autonomous} 
    \end{align}
which we can rewrite in Stratonovich form as     
    \begin{align}
  d\cX_t &= B(t, \mathcal X_t) dt + \sigma(t, \cX_t) \circ dU_t \nonumber\\
  & = B(t, \mathcal X_t) dt + \sum_{i=1}^d\sigma\col (t, \cX_t) \circ dU^i_t\,,  \label{stratonovich non-autonomous general SDE} 
\end{align}
where $\sigma\col\colon\R_+\times\R^d \rightarrow \R^d$ is the vector field given by the  $i$-th column of the $d\times d$ matrix $\sigma$,  $B\colon\R_+ \times \R^d\rightarrow \R^d$ is the vector with $i$-th component equal to 
$$
B^i(t,x)= b^i(t,x) - \frac12\sum_{j,k=1}^d \sigma_{jk}\frac{\pa\sigma_{ik}}{\pa x_j}, \quad i=1, \dots, d, 
$$
and  $U^i_t$ is the one-dimensional Brownian motion corresponding to the $i$-th component of the vector $U_t$. \footnote{Strictly speaking the work \cite{angiuli2013hypercontractivity}, or at least the parts we will use, consider diffusion coefficients that do not depend on the state variable, i.e. of the form $\sigma=\sigma(t)$. Extensions have however been considered in \cite{cass2021long}. We will be more precise on this in the proof of Theorem \ref{thm:projection method works}.}
    
The SDE \eqref{generalSDEnon-autonomous} (or, equivalently, \eqref{stratonovich non-autonomous general SDE}) is non-autonomous. In this case one cannot expect the long time behaviour of the dynamics to be described by a single invariant measure, at least not in general (see Appendix \ref{appendix:long time behaviour of non-autonomous SDEs} for more background on this). However, suppose we know that the coefficients $B(t,x), \sigma(t,x)$ are such that 
\begin{equation}\label{eqn:limit coefficients - informal}
    \lim_{t\rightarrow \infty} B(t,x) = \bar{B}(x), \quad \lim_{t\rightarrow \infty} \sigma(t,x) = \bar{\sigma}(x) \,.
\end{equation}
In this case we can consider the autonomous SDE with coefficients given by $\bar B$ and $\bar \sigma$, i.e. the dynamics
\begin{equation}\label{stratonovich autonomous limiting SDE}
    d\bar \cX(t) = \bar B(\bar \cX_t) dt + \bar \sigma(\bar \cX_t) \circ  dU_t \,.
\end{equation}
Then, under  assumptions on the coefficients $B$ and $\sigma$, one expects that the long time behaviour of  $\cX_t$ and $\bar\cX_t$ will be the same. This has been shown to be the case in \cite{angiuli2013hypercontractivity} and later in \cite{cass2021long}, under milder assumptions. 

This result can be applied in a straightforward way to the dynamics \eqref{eqn:gyongy-coarse-graining} and \eqref{eq:sfgenaralprojection}, as \eqref{eqn:gyongy-coarse-graining} is of the form \eqref{generalSDEnon-autonomous} and  \eqref{eq:sfgenaralprojection} is precisely the corresponding `limit equation' as in \eqref{stratonovich autonomous limiting SDE}.  Indeed, modulo caveats and subtleties, the intuition behind Theorem \ref{thm:projection method works} is as follows: if $\rho_t(x,y)$ converges to $\rho(x,y)$ as $t\rightarrow \infty$, then also  $\bar \rho_t(x) \rightarrow \bar \rho(x)$ and hence $\rho_t(y\vert x)\rightarrow \rho(y\vert x)$ as $t\rightarrow \infty$. In turn, under good conditions,   this implies convergence of the associated coefficients, i.e.
\begin{equation}\label{convergemce stratonovich coefficients}
\lim_{t\rightarrow \infty} F_G(t,x) = F_P(x), \quad \lim_{t\rightarrow \infty} \alpha_G(t,x) = \alpha_P(x) \,, \quad \mbox{for every } x \in \R^d, 
\end{equation}
where $F_G$ and $F_P$ are the Stratonovich drifts of \eqref{eqn:gyongy-coarse-graining} and \eqref{eq:sfgenaralprojection}, respectively. 
Hence, again under appropriate assumptions, we deduce that $X_G(t)$ has the same long time behaviour as $X_P(t)$. However from \eqref{eqn:law equality gyongy} we already know that $X_G(t)$ has the same long-time behaviour as $X(t)$. We can therefore conclude that the invariant measure of $X_P(t)$ is precisely $\bar\rho$. This is the backbone of the reasoning which allows us to conclude and prove Theorem \ref{thm:projection method works}.

Note that while the justification of the projection method given for gradient systems relies on the knowledge of the explicit form for the invariant measure $\rho$, the reasoning   we have outlined above (as well as the statements and proof of Theorem \ref{thm:projection method works}) requires no such knowledge, coherently with the fact that for general non gradient systems  one would not expect to explicitly know  the invariant measure $\rho$. 

\subsection{On the Gy\"ongy method.}\label{subsec:proof gyongy}  
Here we want to show that the equality \eqref{eqn:law equality gyongy} holds.  Recalling that $\rho_t$ is the law of the solution $(X_t, Y_t)$ of system \eqref{eq:sfep=1}, $\rho_t$ is the unique weak solution to the Fokker-Planck equation
$$
\pa_t \rho_t = \cL'\rho_t = \cL_x'\rho_t +\cL_y'\rho_t \, ,  
$$
where $\cL_x', \cL_y'$ are as in \eqref{flat adjoint}.  We can therefore  write
\begin{align*}
    \frac{d}{dt}\int_{\R^d} dx\,  \bar{\rho}_t(x) \varphi(x) &\stackrel{\eqref{joint and marginal time t}}{=} 
    \frac{d}{dt} \int_{\R^d}\!\!\int_{\R^{n-d}}  \varphi(x)  \rho_t(x,y)\,  dx \, dy\\
    & = \int_{\R^d}\!\!\int_{\R^{n-d}} \varphi(x) \cL_x' \rho_t(x,y) \,dx \, dy+ \int_{\R^d}\!\!\int_{\R^{n-d}}  \varphi(x)\cL_y' \rho_t(x,y) \,dx \, dy, 
\end{align*}
for every test function $\varphi \in C_0^{\infty}(\R^d)$.  
Let us now look at the second addend in the above. Since $\cL_y'$ is a differential operator in the $y$ variable only while $\varphi$ is a function of $x$ only (and hence comes out of the inner integral),  if the functions $g(x, \cdot)\rho_t(x,\cdot)$ and $\pa_{y_j}(\beta\beta^T(x, \cdot)\rho_t(x, \cdot))$ vanish at infinity for each $x \in \R^d$ and each $j=1, \dots, n-d$ (this is guaranteed for example if they are in $H^1(\R^{n-d})$ for each $x \in \R^d$, more comments in this in Note \ref{note: on gyongy extension}), the inner integral of the second addend in the above vanishes. Indeed, under such an assumption, we have    
\begin{align*}
\int_{\R^{n-d}} \cL_y' \rho_t(x,y)\, dy = 
 \int_{\R^{n-d}} \sum_i \pa_{y_i} \!\left[ (g_i(x,y) \rho_t(x,y))
 + \sum_j \pa_{y_j}((\beta\beta^T(x,y))_{ij} \rho_t(x,y))\right]dy = 0
\end{align*}
for every $x \in \R^d$. 
Hence
\begin{align*}
    \frac{d}{dt}\int_{\R^d}   \bar{\rho}_t(x) \varphi(x) \, dx
    & = \int_{\R^d}\!\!\int_{\R^{n-d}} \varphi(x) \cL_x' \rho_t(x,y) \, dx\, dy\\
   & = 
    \iint  \varphi(x)\sum_i\pa_{x_i}\left[(f(x,y) \rho_t(x,y)) 
    +\frac{1}{2}  \sum_j\pa_{x_j} ((\alpha\alpha^T)_{ij}\rho_t(x,y))
    \right]\, dx \, dy\\
    &\stackrel{\eqref{joint and marginal time t}}{=} - \sum_i \iint  \pa_{x_i}\varphi(x) \,  f(x,y) \bar\rho_t(x) \rho_t(y\vert x) dx \, dy \\
    & +
    \frac{1}{2} \sum_{ij}\iint (\alpha\alpha^T)_{ij}(x,y) \pa_{x_ix_j} \varphi(x) \bar\rho_t(x) \rho_t(y\vert x) \, dx \, dy\\
    & = - \sum_i \int_{\R^d}  \pa_{x_i}\varphi(x) \bar\rho_t(x) f_G(t,x) \, dx\\
    &+ \frac{1}{2} \sum_{ij} \int_{\R^d}   \pa_{x_ix_j} \varphi(x) \bar\rho_t(x) (\alpha_G)_{ij}(t,x) \, dx \,.
\end{align*}
From the first and last line of the above (and after an integration by parts in $x$) one can read off the weak formulation of the equation for $\bar\rho_t$, and see that this coincides with the weak formulation of the  Fokker-Planck equation for the evolution \eqref{eqn:gyongy-coarse-graining}. Since the solution of the Fokker-Plank equation is the law of the corresponding process,  the following proposition holds.

\begin{prop}\label{Prop: extension gyongy}
    Assume the following: 
    
   i) The SDEs \eqref{eq:sfep=1}
and \eqref{eqn:gyongy-coarse-graining} have a unique weak solution and  the laws of the solutions of both SDEs have strictly positive densities for each time $t\geq 0 $. 

ii) The density $\rho_t$ of the process $(X_t, Y_t)$ solution of \eqref{eq:sfep=1} is such that the functions $g(x,\cdot) \rho_t(x, \cdot)$ and $\pa_{y_j}(\beta\beta^T(x, \cdot)\rho_t(x,\cdot))$ vanish at infinity for each $x \in \R^d, t\geq 0$ fixed and each $j=1, \dots, n-d$.

Then the marginal $\bar\rho_t$  of the joint density $\rho_t$ of the law of the process $(X_t,Y_t)$, defined in \eqref{joint and marginal time t}, coincides with the law of $X_G(t)$ (solution of \eqref{eqn:gyongy-coarse-graining}) for each time $t\geq 0$; that is, \eqref{eqn:law equality gyongy} holds. 
\end{prop}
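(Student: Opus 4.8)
The plan is to make rigorous the formal computation carried out just before the statement, turning the chain of equalities into a genuine weak-solution argument. First I would fix a test function $\varphi \in C_0^\infty(\R^d)$ and, using assumption (i) together with the standing ellipticity and smoothness hypotheses, invoke the fact that $\rho_t$ is the unique weak solution of the Fokker--Planck equation $\pa_t\rho_t = \cL'\rho_t = \cL_x'\rho_t + \cL_y'\rho_t$; regularity theory for uniformly parabolic equations gives that $\rho_t$ is smooth in $(t,x,y)$ and strictly positive, so all the manipulations that follow (differentiation under the integral, integration by parts) are licit provided the boundary terms vanish. The only place where a nontrivial decay input is needed is precisely assumption (ii): it guarantees that $\int_{\R^{n-d}} \cL_y'\rho_t(x,y)\,dy = 0$ for every $x$, because $\cL_y'\rho_t$ is a sum of $y$-derivatives of the functions $g(x,\cdot)\rho_t(x,\cdot)$ and $\pa_{y_j}((\beta\beta^T(x,\cdot))_{ij}\rho_t(x,\cdot))$, all of which vanish at infinity in $y$.

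Next I would carry out the computation displayed in the excerpt: starting from $\frac{d}{dt}\int_{\R^d}\bar\rho_t(x)\varphi(x)\,dx$, write $\bar\rho_t(x) = \int_{\R^{n-d}}\rho_t(x,y)\,dy$, differentiate, substitute the Fokker--Planck equation, discard the $\cL_y'$ term by the vanishing just established, integrate by parts in $x$ to move derivatives off $\varphi$, and then use the factorization $\rho_t(x,y) = \bar\rho_t(x)\rho_t(y\vert x)$ together with the definitions \eqref{eqn:gyongy-averaged-coefficients}--\eqref{eqn:gyongy-averaged-coefficients2} of $f_G$ and $\alpha_G$ to recognise the result as
\[
\frac{d}{dt}\int_{\R^d}\bar\rho_t(x)\varphi(x)\,dx = -\sum_i\int_{\R^d}\pa_{x_i}\varphi(x)\,\bar\rho_t(x)\,(f_G)_i(t,x)\,dx + \frac12\sum_{ij}\int_{\R^d}\pa_{x_ix_j}\varphi(x)\,\bar\rho_t(x)\,(\alpha_G)_{ij}(t,x)\,dx .
\]
Reading this together with an integration by parts in $x$, one sees that $t\mapsto \bar\rho_t$ is a weak solution of the Fokker--Planck (Kolmogorov forward) equation associated with the SDE \eqref{eqn:gyongy-coarse-graining}, i.e. $\pa_t\bar\rho_t = \cL_G'\bar\rho_t$ where $\cL_G'$ is the formal adjoint of the generator of \eqref{eqn:gyongy-coarse-graining}, with initial datum $\bar\rho_0 = $ law of $X_0$.

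Finally I would close the argument by uniqueness. By assumption (i) the SDE \eqref{eqn:gyongy-coarse-graining} has a unique weak solution, and its one-time marginals $\mathrm{Law}(X_G(t))$ solve the same Fokker--Planck equation with the same initial datum and have strictly positive densities; since the coefficients $f_G,\alpha_G$ are, by the ellipticity inherited from \eqref{eqn:unif_ellipticity_initial_system} (cf. the footnote in the proof of Proposition \ref{lem:bar rho invariant for Lp}) and by the smoothness of $\rho_t(\cdot\vert x)$, such that the forward equation has a unique measure-valued solution in the relevant class, we conclude $\bar\rho_t = \mathrm{Law}(X_G(t))$ for every $t\geq 0$, which is exactly \eqref{eqn:law equality gyongy}.

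I expect the main obstacle to be the justification of the boundary/decay steps rather than the algebra: namely (a) verifying that under assumption (ii) the interchange of $\frac{d}{dt}$ with the $y$-integral and the vanishing of $\int_{\R^{n-d}}\cL_y'\rho_t\,dy$ are rigorous (one wants, e.g., $g(x,\cdot)\rho_t(x,\cdot)$ and $\pa_{y_j}((\beta\beta^T)_{ij}\rho_t)$ to be integrable in $y$ with derivatives one can integrate, which is why the parenthetical remark about $H^1(\R^{n-d})$ membership is invoked), and (b) pinning down the precise uniqueness class for the forward equation of \eqref{eqn:gyongy-coarse-graining} so that "two weak solutions with the same initial datum coincide" is actually applicable — here one leans on the uniform ellipticity of $\cL_G$ and the regularity of its coefficients, exactly as in Proposition \ref{lem:bar rho invariant for Lp}. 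The rest is the bookkeeping already sketched in the excerpt.
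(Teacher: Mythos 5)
Your proposal is correct and follows essentially the same route as the paper: differentiate $\int\bar\rho_t\varphi$, substitute the Fokker--Planck equation for $\rho_t$, kill the $\cL_y'$ term via the decay in assumption (ii), and use the factorization $\rho_t(x,y)=\bar\rho_t(x)\rho_t(y\vert x)$ with the definitions of $f_G,\alpha_G$ to identify $\bar\rho_t$ as a weak solution of the forward equation of \eqref{eqn:gyongy-coarse-graining}, concluding by the uniqueness contained in assumption (i). Your extra care in spelling out the uniqueness class for the forward equation only makes explicit what the paper's closing sentence leaves implicit.
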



\begin{note}\label{note: on gyongy extension}
{\textup{Proposition \ref{Prop: extension gyongy} gives sufficient conditions in order for the coarse grained dynamics \eqref{eqn:gyongy-coarse-graining} proposed by G\"yongy to have the desired property \eqref{eqn:law equality gyongy}. Let us make some   comments on the assumptions of  Proposition \ref{Prop: extension gyongy}, comparing them to those in \cite{gyongy1986mimicking}. 
\begin{itemize}
\item Assumption ii) in Proposition \ref{Prop: extension gyongy} is standard. It is satisfied  assuming the process \eqref{eq:sfyep=1} is elliptic and it  enjoys good ergodic properties (for each $x$ fixed). Sufficient conditions (of Lyapunov type) for this to hold have been extensively studied, see e.g. \cite{bogachev2006global}, or can also be proved `by hand' using classical approaches such as in \cite{butta2018non, degond1986global}. 
\item Condition i) cannot be dispensed with. When it fails the result may simply not hold. The system in Subsection \ref{sec: counterexample to Gyongy} illustrates this fact.  
\item Condition i) imposes a requirement on \eqref{eq:sfep=1} and on its associated Gy\"ongy approximation \eqref{eqn:gyongy-coarse-graining}.  Since the coefficients of \eqref{eqn:gyongy-coarse-graining} are constructed starting from the coefficients of \eqref{eq:sfep=1}, it would be desirable to state general conditions on the coefficients of \eqref{eq:sfep=1}  which suffice to guarantee the well posedness of both \eqref{eq:sfep=1} and \eqref{eqn:gyongy-coarse-graining}. If the coefficients of \eqref{eq:sfep=1} are bounded and smooth and \eqref{eq:sfep=1} is elliptic then \eqref{eqn:gyongy-coarse-graining} inherits the same properties and  the first requirement of Proposition \ref{Prop: extension gyongy} is satisfied. Indeed,  let us recall that elliptic SDEs with bounded and smooth coefficients are weakly well posed, in the sense that they admit a unique weak solution which has a strictly positive density, see \cite{karatzas2014brownian}. If $f$ and $\alpha$ in \eqref{eq:sfep=1} are bounded, then so are the coefficients $f_G, \alpha_G$ of \eqref{eqn:gyongy-coarse-graining}:
$$
f_G(x) = \int_{\R^{n-d}} f(x,y) \rho_t(y \vert x) \, dy \leq 
\sup_{x,y} |f| \int_{\R^{n-d}}  \rho_t(y \vert x) \, dy = \sup_{x,y} |f|,  
$$
because $\rho_t(\cdot \vert x)$ is a probability distribution for every $t, x$ fixed, analogously for $\alpha_G$. 
    Moreover, with the same reasoning as the one at the start of  the proof of Proposition \ref{lem:bar rho invariant for Lp}, it is easy to see that if \eqref{eq:sfep=1} is uniformly elliptic and with smooth coefficients the same is true of \eqref{eqn:gyongy-coarse-graining};  hence both  the  law of the  solution of \eqref{eq:sfep=1}  and the law of the solution of \eqref{eqn:gyongy-coarse-graining} admit a strictly positive density.  
    Nonetheless outside of the context of smooth bounded coefficients, it is not at all straightforward to find conditions on \eqref{eq:sfep=1} which imply the desired properties for \eqref{eqn:gyongy-coarse-graining}. We comment on this next. 
    \item In this section we have presented the results of \cite{gyongy1986mimicking} in a setting which is adapted to the one of the present paper. Nonetheless the setup of \cite{gyongy1986mimicking} and of subsequent related literature, see e.g. \cite{brunick2013mimicking} and  references therein,  is much more general. In \cite{gyongy1986mimicking, brunick2013mimicking} the authors consider a random process, say $x_t$, which is not assumed to necessarily  be the solution of an SDE, it is simply assumed to have an Ito differential, i.e. $x_t$ it is a process of the form
    \begin{equation}\label{eqn:ito differential}
    dx_t = \gamma(t, \omega) dt+ \delta (t, \omega) dW_t 
    \end{equation}
    where $\omega$ is the realization in the underlying probability space. 
    The purpose of \cite{gyongy1986mimicking, brunick2013mimicking} is to provide a way to construct  another process, say $z_t$, which solves an SDEs and that inherits certain properties of $x_t$. For example, like in our case, the process $z_t$ might be required to preserve certain  marginals of $x_t$.  In \cite{gyongy1986mimicking, brunick2013mimicking} it is explicitly noted that, at least in the elliptic setting, assuming the Gy\"ongy procedure applied to $x_t$ gives rise to an SDE for $z_t$ which is well posed,   then the law of  the desired marginals of $x_t$ and the one for $z_t$ will coincide. According to \cite{gyongy1986mimicking} if the coefficients $\gamma, \delta$ are bounded and measurable and $\delta$ is uniformly elliptic,  both \eqref{eqn:ito differential} and the associated coarse grained equation $z_t$ have a unique weak solution  and moreover  $z_t$ will enjoy the desired property of mimicking the marginal of $x_t$. Boundedness and ellipticity are sufficient conditions, but they are not necessary. Nonetheless various authors have explicitly pointed out that,   outside of this bounded elliptic case it is truly difficult to state conditions directly on the coefficients  of \eqref{eqn:ito differential} such that $z_t$ is well posed, see e.g.~\cite{brunick2013mimicking} for a thorough discussion on this. The work \cite{brunick2013mimicking} also produced examples where the SDE for $z_t$  has multiple solutions, none of them having the desired law.
     On the whole, the problem of finding conditions in terms of the coefficients of \eqref{eqn:ito differential} such that the associated coarse grained equation is at least well posed   seems to be still open.   In our setting, however,  the process we want to coarse grain   is already the solution of an SDE rather than a general Ito differential.  Due to this additional structure it should be an approachable problem to find assumptions (more general than  boundedness and ellipticity) on the coefficients of \eqref{eq:sfep=1} which guarantee  well posedness and existence of a strictly positive density for  \eqref{eqn:gyongy-coarse-graining}. This will be the subject of future work.   
\end{itemize}
}}
\end{note}


\section{Invariant measure of the auxiliary dynamics vs. ECD} \label{sec: rhox and rhox|y}
In this section, we  consider systems of SDEs which are a bit more general than system \eqref{eq:sfep=1}, namely we consider systems of the form
\begin{subequations}
\label{eq:general SDE}
\begin{align}   
dX_t&=f(X_t, Y_t)\,dt+\alpha_{11}(X_t,Y_t)dU_t+\alpha_{12}(X_t,Y_t)\, dW_t,\\
dY_t&=g(X_t, Y_t)\,dt+\beta(X_t,Y_t)\, dW_t,
\end{align}
\end{subequations}
where, $(X_t,Y_t)\in \R^{d}\times \R^{n-d}$,  $U_t, W_t$ are independent standard  Brownian motions in $\R^{d}$ and $\R^{n-d}$, respectively, and the coefficients 
\begin{align*}
&f: \R^{d}\times \R^{n-d}\rightarrow \R^{d},~ g\colon\R^{d}\times \R^{n-d}\rightarrow \R^{n-d}, \\    
&\alpha_{11}\colon\R^{d}\times \R^{d}\rightarrow \R^{d\times d},~\alpha_{12}\colon\R^{d}\times \R^{n-d}\rightarrow \R^{d\times (n-d)},~\beta: \R^{d}\times \R^{n-d}\rightarrow \R^{(n-d)\times (n-d)},
\end{align*}
are assumed smooth; we also  assume throughout that system \eqref{eq:general SDE} is uniformly elliptic.
As in previous sections we consider the family of auxiliary processes $Y^{(x)}_t$ obtained by freezing the value of $X_t$ in the equation for $Y_t$,  
\begin{equation}\label{eqn:general auxiliary process}
    dY_t^{(x)}= g(x, Y_t^{(x)}) dt + \beta(x,Y_t^{(x)}) dW_t \,  
\end{equation}
and  make the standing assumption that the process $Y_t^{(x)}$ admits, for each $x \in \R^d$, a unique invariant measure, $\rho^{(x)}$ (which is a measure on $\R^{n-d}$) and also that the full system \eqref{eq:general SDE} admits a unique invariant measure $\rho$. As in previous sections, the $x-$marginal of $\rho$ and the associated conditional measure are denoted $\bar\rho$ and $\rho( \cdot\vert x)$, see \eqref{eqn:bar rho def} and \eqref{eqn:conditional for system 1}.

\subsection{Necessary and sufficient conditions}
\label{sec: rhox equals rhox|y}

With this premise, the main purpose of this section is to seek necessary and sufficient conditions under which $\rho^{(x)}$ and $\rho(\cdot\vert x)$ coincide, for every $x \in \R^d$.  The main result of this section is Proposition \ref{prop: equivalent conditions} below.

We note that in \eqref{eq:general SDE} the noise in the equations for $X_t$ is not independent of the noise in the equation for  $Y_t$. This is in contrast to system \eqref{eq:sfep=1}.    We define
\[
A(x,y)=\begin{pmatrix}
    A_{11}(x,y)& A_{12}(x,y)\\
    A_{21}(x,y)& A_{22}(x,y)
\end{pmatrix},
\]
where
\begin{align*}
&A_{11}(x,y)=(\alpha_{11}\alpha_{11}^T+\alpha_{12}\alpha_{12}^T)(x,y), \quad A_{12}(x,y)=(\alpha_{12}\beta^T)(x,y),
\\& A_{21}(x,y)=A_{12}^T(x,y),\quad A_{22}(x,y)=(\beta\beta^T)(x,y).
\end{align*}
Then the $L^2$-adjoint of the  generator of \eqref{eq:general SDE} is given by
\begin{equation}
 \label{eq: full generator}   
\mathcal{L}'\nu=\mathcal{L}'_x\nu+\mathcal{L}'_{xy}\nu+\mathcal{L}'_y \nu,
\end{equation}
where
\begin{align*}
\mathcal{L}'_x\nu&:=-\mathrm{div}_x(f(x,y)
\nu)+\mathrm{div}_x[\mathrm{div}_x(A_{11}(x,y)\nu)],\\ 
\mathcal{L}'_{xy}\nu&=\mathrm{div}_{x}[\mathrm{div}_{y}(A_{12}(x,y)\nu)]+\mathrm{div}_{y}[\mathrm{div}_{x}(A_{21}(x,y)\nu)],\\
\mathcal{L}'_y\nu&:=-\mathrm{div}_y(g(x,y)\nu]+\mathrm{div}_y[\mathrm{div}_y(A_{22}(x,y)\nu)].
\end{align*}
The following proposition is the main result of this section.
\begin{proposition}
\label{prop: equivalent conditions}
If $\mathcal{L}'\rho=0$ and $\rho^{(x)}(y)=\rho(y|x)
$, then
\begin{equation}  
\label{eq:condition}
\mathcal{L}_x'\rho=-\mathcal{L}'_{xy}\rho~ \text{and}~ \mathcal{L}'_y(\rho)=0.
\end{equation}
Vice versa, if $\rho$ satisfies \eqref{eq:condition} and $\bar\rho$ is its $x$-marginal density, assuming $\bar\rho>0$ then $\mathcal{L}'(\rho)=0$ and $\rho^{(x)}(y)=\rho(y|x)$. 
\end{proposition}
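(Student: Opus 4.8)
The plan is to unpack both directions of the equivalence by decomposing the equation $\mathcal{L}'\rho = 0$ according to the differential orders in $y$, using the key structural fact that $\rho(x,y) = \rho(y\vert x)\bar\rho(x)$ together with the defining property of $\rho^{(x)}$. First I would recall that, by definition of the auxiliary process \eqref{eqn:general auxiliary process}, the measure $\rho^{(x)}$ is, for each fixed $x$, the (unique) solution of $\mathcal{L}_y'|_x \,\rho^{(x)} = 0$, where $\mathcal{L}_y'|_x$ is the operator $\mathcal{L}_y'$ with $x$ treated as a frozen parameter — that is, $\mathcal{L}_y'$ only differentiates in $y$, so it automatically has this interpretation. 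Hence the condition $\rho^{(x)}(y) = \rho(y\vert x)$ is, for each $x$, equivalent to $\mathcal{L}_y'\big(\rho(y\vert x)\big) = 0$, where now we regard $\rho(y\vert x)$ as a function of $y$ with $x$ a parameter. The first observation to nail down is that, since $\mathcal{L}_y'$ differentiates only in $y$ and $\bar\rho(x)>0$ is a ($y$-independent) scalar, $\mathcal{L}_y'\big(\rho(y\vert x)\big) = 0$ holds if and only if $\mathcal{L}_y'\big(\rho(y\vert x)\bar\rho(x)\big) = \mathcal{L}_y'(\rho) = 0$. This already identifies the second equation in \eqref{eq:condition} with the statement $\rho^{(x)} = \rho(\cdot\vert x)$, \emph{provided} $\bar\rho > 0$, which we have assumed (and which in any case follows from uniform ellipticity and smoothness).

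For the forward direction, assume $\mathcal{L}'\rho = 0$ and $\rho^{(x)}(y) = \rho(y\vert x)$. By the paragraph above, the latter gives $\mathcal{L}_y'\rho = 0$. Substituting this into the decomposition \eqref{eq: full generator}, namely $0 = \mathcal{L}'\rho = \mathcal{L}_x'\rho + \mathcal{L}_{xy}'\rho + \mathcal{L}_y'\rho$, immediately yields $\mathcal{L}_x'\rho = -\mathcal{L}_{xy}'\rho$, which is the first equation of \eqref{eq:condition}. For the reverse direction, assume $\rho$ satisfies \eqref{eq:condition} and $\bar\rho > 0$. Adding the two equations in \eqref{eq:condition} gives $\mathcal{L}_x'\rho + \mathcal{L}_{xy}'\rho + \mathcal{L}_y'\rho = -\mathcal{L}_{xy}'\rho + \mathcal{L}_{xy}'\rho + 0 = 0$, i.e. $\mathcal{L}'\rho = 0$; and $\mathcal{L}_y'\rho = 0$ together with $\bar\rho > 0$ gives, as above, $\mathcal{L}_y'(\rho(\cdot\vert x)) = 0$ for each $x$, so $\rho(\cdot\vert x)$ is an invariant density for the frozen $y$-dynamics; by the uniqueness of $\rho^{(x)}$ (a standing assumption of the section) we conclude $\rho(y\vert x) = \rho^{(x)}(y)$.

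The step I expect to require the most care — though it is more a matter of bookkeeping than of genuine difficulty — is the interchange between ``$\mathcal{L}_y'$ applied to the conditional density'' and ``$\mathcal{L}_y'$ applied to the joint density'', i.e. verifying that dividing/multiplying by the strictly positive, $y$-independent factor $\bar\rho(x)$ commutes with the operator $\mathcal{L}_y'$ in the appropriate (distributional / infinitesimally-invariant) sense. One must be slightly careful that ``$\mathcal{L}_y'\rho = 0$'' is meant as an identity of functions (or at least in $\mathcal{D}'$ in the $y$ variable for each $x$), and that the uniqueness of $\rho^{(x)}$ is uniqueness among probability densities, so that one should also check normalisation — but $\int \rho(y\vert x)\,dy = 1$ by construction, so this is automatic. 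I would also remark, as the proposition's hypotheses implicitly require, that the ellipticity and smoothness assumptions on \eqref{eq:general SDE} guarantee $\rho$, $\bar\rho$ and $\rho(\cdot\vert x)$ are smooth and strictly positive (by the same argument as at the start of the proof of Proposition \ref{lem:bar rho invariant for Lp}), so all the manipulations above are legitimate pointwise.
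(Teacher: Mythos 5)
Your proposal is correct and follows essentially the same route as the paper: the key step in both is the factorisation $\mathcal{L}'_y\rho=\bar\rho(x)\,\mathcal{L}'_y(\rho(\cdot\vert x))$ (valid because $\mathcal{L}'_y$ differentiates only in $y$ and $\bar\rho$ is $y$-independent), combined with the decomposition \eqref{eq: full generator} for the forward direction and strict positivity of $\bar\rho$ plus uniqueness of $\rho^{(x)}$ for the converse. No gaps to report.
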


\begin{itemize}
    \item As a direct consequence of Proposition \ref{prop: equivalent conditions}, 
    \item if $\alpha_{12}(x,y)=0$ then $A_{12}(x,y)=A_{21}(x,y)=0$; thus  $\mathcal{L}'_{xy}(\rho)=0$ for all $\rho$. In this case, Proposition \ref{prop: equivalent conditions} 
    \item states that $\mathcal{L}'\rho=0$ 
\end{itemize}

\begin{note}{\textup{Proposition \ref{prop: equivalent conditions} provides necessary and sufficient conditions when the measure $\rho^{(x)}$ and $\rho(\cdot|x)$ coincide. Below we provide some further comments on this proposition and on averaging of the system \eqref{eq:general SDE}.}}
\textup{
\begin{itemize}
    \item As a direct consequence of Proposition \ref{prop: equivalent conditions}, if $\alpha_{12}(x,y)=0$ then $A_{12}(x,y)=A_{21}(x,y)=0$; thus  $\mathcal{L}'_{xy}(\rho)=0$ for all $\rho$. In this case, Proposition \ref{prop: equivalent conditions} states that $\mathcal{L}'\rho=0$ and $\rho^{(x)}(y)=\rho(y|x)$ if and only if $\rho$ satisfies  $\mathcal{L}_x'\rho=0 ~\text{and}~
   \mathcal{L}'_y\rho=0$  (that is,  $\rho$ is in the kernel of both $\mathcal{L}_x'$ and $\mathcal{L}_y'$). Note that for the reversible system  \ref{eq:sfgradient} it is the case that $\cL_x'\rho_V=\cL_y'\rho_V=0$, so this is another way of seeing that for reversible systems equality \eqref{eqn:frozen=conditional} holds.
\item  If there is an explicit scale separation in \eqref{eq:general SDE}, that is if we consider the system
\begin{subequations}
\begin{align*}   
dX_t&=f(X_t, Y_t)\,dt+\alpha_{11}(X_t,Y_t)dU_t+\alpha_{12}(X_t,Y_t)\, dW_t,\\
dY_t&=\frac{1}{\varepsilon}g(X_t, Y_t)\,dt+\frac{1}{\sqrt{\varepsilon}}\beta(X_t,Y_t)\, dW_t,
\end{align*}
\end{subequations}
then, using computations similar to \cite[Chapter 10]{pavliotis2014stochastic}, one obtains the following averaged dynamics:
\begin{equation*}
 d \bar X_t=\bar{b}_1(\bar X_t)\,dt+ \bar{\alpha}^{\frac{1}{2}}(\bar X_t)\,dU_t,  
\end{equation*}
where
\begin{align*}
\bar{f}(x)=\int f(x,y)\rho^{(x)}(y)\,dy,~\bar{\alpha}(x)=\int (\alpha_{11}\alpha_{11}^T+\alpha_{12}\alpha_{12}^T)(x,y)\rho^{(x)}(y)\,dy.
\end{align*}
Compared to \eqref{eqn:averaged coefficients}, the correlated noise gives rise to an additional term in the averaged diffusion coefficient. Note that the averaged dynamics and the one obtained by naively applying the PM (i.e. by forgetting for a moment that $\rho(\cdot \vert x)$ would depend on $\eps$) would coincide provided that $\rho^{(x)}(y)=\rho(y|x)$.
\end{itemize}
}  
\end{note}

\begin{proof}[Proof of Proposition \ref{prop: equivalent conditions}]
Let $\nu$ be any smooth probability density on $\R^n$. Using again the basic relation between the joint, conditional and marginal distributions \eqref{eqn:joint=conditional times marginal} (applied to $\nu$), we can rewrite $\mathcal{L}'_y\nu$ as follows:
\begin{align}
\label{eq:generator Y}
\mathcal{L}'_y\nu&= \mathcal{L}'_y[\nu(y|x)\bar\nu(x)]\notag\\
&=-\mathrm{div}_y[g(x,y)(\nu(y|x)\bar\nu(x))]+\mathrm{div}_y[\mathrm{div}_y(A_{22}\nabla_y(\nu(y|x)\bar\nu(x))]\notag
\\&=-\bar\nu(x)\mathrm{div}_y[g(x,y)\nu(y|x)]+\bar{\nu}(x)\mathrm{div}_y[\mathrm{div}_y(A_{22}\nabla_y(\nu(y|x)]\notag
\\&=\bar{\nu}(x)\mathcal{L}'_y(\nu(y|x)).
\end{align}
Substituting this back to \eqref{eq: full generator}, we obtain
\begin{equation}
\label{eq:invariant general SDE}
\mathcal{L}'\nu=\mathcal{L}_x'\nu+\mathcal{L}'_{xy}\nu+\bar{\nu}(x)\mathcal{L}'_y(\nu(y|x)).  \end{equation}
Now suppose that $\mathcal{L}'\rho=0$ and $\rho^{(x)}(y)=\rho(y|x)$. From the latter fact, we have
\[
\mathcal{L}'_y(\rho(y|x))=\mathcal{L}'_y(\rho^{(x)}(y))=0.
\]
Combining this, \eqref{eq:invariant general SDE} and the fact that $\mathcal{L}'\rho=0$ we obtain
\[
\mathcal{L}_x'\rho=-\mathcal{L}'_{xy}\rho.
\]
Thus if $\mathcal{L}'\rho=0$ and $\rho^{(x)}(y)=\rho(y|x)$ then we get
\begin{equation*}
\mathcal{L}_x'\rho=-\mathcal{L}'_{xy}\rho~\text{and}~ \mathcal{L}'_y\rho=0.   
\end{equation*}
Vice versa, suppose that the above relations hold. Then it is clear that
\[
\mathcal{L}'\rho=\mathcal{L}'_x\rho+\mathcal{L}'_{xy}\rho+\mathcal{L}_y'\rho=0.
\]
In addition, using \eqref{eq:generator Y} we obtain
\[
\bar{\rho}(x)\mathcal{L}'_y(\rho(y|x))=\mathcal{L}_y'\rho=0.
\]
As a consequence, if $\bar\rho(x)>0$ for all $x$, then we have $\mathcal{L}'_y(\rho(y|x))=0$, which implies that $\rho(y|x)=\rho^{(x)}(y)$ since $\rho^{(x)}(y)$ is the unique invariant measure of $\mathcal{L}_y$. This completes the proof of this proposition.
\end{proof}
We note that Proposition \ref{prop: equivalent conditions} is applicable to the gradient flow (reversible) system \eqref{eq:sfgradient} considered in Section \ref{sec: Main Results} in which the noises are independent and the invariant measure $\rho=\rho_V$ given explicitly in \eqref{eqn:gradient inv meas} is invariant to both the dynamics of $x_t$ and $y_t$ in \eqref{eq:sfxgradient} and \eqref{eq:sfygradient}. 

We also emphasize that the condition in Proposition \ref{prop: equivalent conditions} is not about the reversibility of the full dynamics. We next provide two examples of non-reversible systems; in the first  $\rho^{(x)}(y)=\rho(y|x)$, in the second $\rho^{(x)}(y)\neq\rho(y|x)$.

\begin{Example}[Example of a non-reversible system such that $\rho^{(x)}(y)=\rho(y|x)$]\label{example: rhox=rhoygivenx}
We consider the following system
\begin{equation}\label{non-reversible with J2}
    dZ_t = (J-I)\nabla V(Z_t) dt + \sqrt 2 dW_t
\end{equation}
for the variable $Z_t=(X_t, Y_t)\in \R^{d}\times \R^{(n-d)}$ where $J\in \R^{n\times n}$ is an anti-symmetric block matrix $J=\mathrm{diag}(J_1, J_2)$, with $J_1\in \R^{d\times d}$ and $J_2\in \R^{(n-d)\times (n-d)}$ are also anti-symmetric matrices. More precisely, the dynamics for $X_t$ and $Y_t$ are given by
\begin{subequations}
   \begin{align}
    dX_t&=(J_1-I)\nabla_x V(x,y)\,dt+\sqrt{2}dU_t,\label{X}\\
    dY_t&=(J_2-I)\nabla_y V(x,y)\,dt+\sqrt{2}dW_t.\label{Y}
\end{align}
\end{subequations}
Note that the noise terms are independent. Under suitable conditions on the potential $V$, \eqref{non-reversible with J2} is a non-reversible system and has a unique invariant measure $\rho$ with the same density as in the reversible system \eqref{eq:sfgradient}, which is given in \eqref{eqn:gradient inv meas}~\cite{hwang2005accelerating,lelievre2013optimal}. We note that the two dynamics \eqref{X} and \eqref{Y} have the same structure as in \eqref{non-reversible with J2}. Their adjoint generators are respectively given by
\begin{align*}
\mathcal L'_x \nu&= \mathrm{div}_x[(I-J_1)\nabla_{x} V(x,y)\nu]+\Delta_x \nu,
\\ \mathcal L'_y\nu &= \mathrm{div}_y[((I-J_2)\nabla_{y} V(x,y)\nu]+\Delta_y \nu. 
\end{align*}
We now show that $\mathcal L'_x\rho=\mathcal L'_y\rho=0$. In fact, we can rewrite $\mathcal L'_x$ as
\[
\mathcal L'_x \nu=\mathrm{div}_x\Big[\nu((I-J_1)\nabla_x V+\nabla_x \log \nu)\Big].
\]
Thus
\begin{align*}
\mathcal L'_x \rho&=\mathrm{div}_x\Big[\rho((I-J_1)\nabla_x V-\nabla_x V)\Big]=-\mathrm{div}_x(\rho J_1\nabla_x V)
\\&=-\Big(\rho \mathrm{div}_x(J_1\nabla_x V)+J_1\nabla_x V\cdot\nabla_x\rho\Big)    
\\&=-\rho\Big(\sum_{ij}(J_1)_{ij}\frac{\partial^2 V}{\partial x_i\partial x_j}-J_1\nabla_x V\cdot \nabla_x V\Big)=0,
\end{align*}
where the last equality holds because tyhe matrix $J_1$ is anti-symmetric.
Similarly we also get $\mathcal L'_y(e^{-V(x,y)})=0$. Furthermore, it is clear that $\bar{\rho}(x)=\int e^{-V(x,y)}\,dy>0$ for all $x\in \R^d$. Hence by Proposition \ref{prop: equivalent conditions}, $\rho^{(x)}(y)=\rho(y|x)$.
\end{Example}
\begin{Example}[non-reversible system such that $\rho^{(x)}(y)\neq \rho(y\vert x)$]\label{example: rhox not equal rhoygivenx}
Now we consider again the non-reversible system \eqref{non-reversible with J2}, but with $J$ being the classical symplectic block matrix 
\[
J=\begin{pmatrix}
0&I\\ -I&0    
\end{pmatrix}.
\]
In this case, the dynamics for the components $X_t$ and $Y_t$ are given by
\begin{align*}
    dX_t&=\nabla_y V(x,y)\,dt-\nabla_x V(x,y)\,dt+\sqrt{2}dU_t,\\
    dY_t&=-\nabla_x V(x,y)\,dt-\nabla_y V(x,y)\,dt+\sqrt{2}dW_t.
\end{align*}

Under suitable conditions on the potential $V$, the above system also has a unique invariant measure $\rho$ with the same density given in \eqref{eqn:gradient inv meas}~\cite{hwang2005accelerating,lelievre2013optimal}. Note that one can write the dynamics for $Y$ as
\begin{equation}
\label{Y2}
dY_t=-\nabla_y (V(x,y)+ A(x,y))\,dt+\sqrt{2}dW(t),   
\end{equation}
if $A$ is a vector field satisfying
\[
\nabla_x V(x,y)=\nabla_y A(x,y).
\]
For simplicity we consider the one dimensional case, in which under suitable conditions on the potential $V$, such a vector field $A$ always exists; for instance, one can take
\[
A(x,y)=\int_0^y \partial_y V(x,w)\,dw.
\]
We can view \eqref{Y2} as a gradient system with the external potential $V+A$.  It implies that
\[
\rho^{(x)}(y)= \frac{e^{-(V(x,y)+ A(x,y))}}{\int e^{-(V(x,y)+ A(x,y))}\,dy}.
\]
We recall that $\rho(y|x)=\frac{e^{-V(x,y)}}{\int e^{-V(x,y)}\,dy}$. Thus $\rho^{(x)}(y)=\rho(y|x)\Longleftrightarrow   e^{-A} = \frac{\int e^{-V-A} dy}{\int e^{-V}dy}$. In general, this condition is not satisfied, thus $\rho^{(x)}(y)\neq\rho(y|x)$. Alternatively, one can directly check that 
\[
\mathcal{L}'_x\rho=-\mathrm{div}_x(\rho\nabla_y V),\quad \mathcal{L}'_y\rho=-\mathrm{div}_y(\rho\nabla_x V). 
\]
In general, it does not hold that $\cL_x'\rho = \cL_y'\rho = 0$; thus according to Proposition \ref{prop: equivalent conditions} $\rho^{(x)}\neq \rho(y|x)$.
\end{Example}    


\subsection{Formal asymptotic  expansion of the ECD}\label{sec: Carsten's section}

We now discuss a similar question as in the previous paragraph, but we consider the case of slow-fast systems (\ref{eq:sf}). 
We recall that in this case the invariant measure of the system, $\rho^{\eps}(x,y)$,  can depend on $\eps$, and this also holds for the conditional distribution $\rho^\eps(y|x)$, 
\begin{equation}
\label{eq:conditional density}
    \rho^\eps(y|x) = \frac{\rho^\eps(x,y)}{\bar{\rho}^\eps(x)}\,,\quad \bar{\rho}^\eps(x)=\int\rho^\eps(x,y)\,dy \, ,
\end{equation}
As explained in Section \ref{sec: Main Results}, here we discuss the question of when the limit \eqref{limit scale separation} holds, i.e. the question of when averaging and projection method are consistent in the limit of scale separation. 

 Before moving on with the formal asymptotic expansion, we illustrate with an example what can go wrong. 
\begin{Example}\label{example:short version cont'd}

We consider the situation of Example \ref{example:short version}. The solution  $(X^\eps,Y^\eps)$ to (\ref{eq:sfeps}) 
has a unique Gaussian invariant measure $\rho^\eps$ given by \eqref{Gaussian invariant}.
Clearly, $\rho^\eps>0$ for every $\eps>0$, but the measure becomes singular in the limit $\eps\to 0$; in particular $\bar{\rho}^\eps\stackrel{*}{\rightharpoonup}\delta_0$ ceases to be strictly positive for $\eps=0$. As a consequence, the conditional distribution $\rho^\eps(\cdot|x)$ as given by   \eqref{eq: ECD} does 
not converge to $\rho^{(x)}=\cN(0,1)$ as $\eps\to 0$.  
\end{Example}

This example is consistent with what we find below via asymptotic expansion: a crucial assumption for the limit \eqref{limit scale separation} to hold is that the marginal $\bar{\rho}^{\eps}$ of the invariant measure $\rho^{\eps}$ of the system remains a strictly positive function in the limit $\eps \rightarrow 0$. We will also identify the limit of $\bar{\rho}^{\eps}$ as the stationary solution of the averaged problem, see \eqref{eq:solvability}  below. 

The Fokker-Planck operator associated to \eqref{eq:sf} is given by
$$
(\cL^{\eps})'= \cL_x'+\frac{1}{\eps} \cL_y'\,,
$$
where we recall that $\cL_x'$ and $\cL_y'$ have been defined in \eqref{flat adjoint}. The invariant measure of \eqref{eq:sf} is then the solution (which we assume to be unique) of

\begin{equation}\label{eq:FP0}
    (\cL^{\eps})'\rho^\eps = 0\, \,.
\end{equation}

We write $\rho^\eps(x,y) = \bar{\rho}^\eps(x)\rho^\eps(y|x)$ and, as customary \cite{pavliotis2008multiscale}, make the ansatz
\begin{subequations}\label{ansatz}
\begin{alignat}{1}
    \bar{\rho}^\eps(x) & = \bar{\rho}_0(x) + \eps\bar{\rho}_1(x) + \ldots  \label{ansatz1}\\
    \rho^\eps(y|x) & = \rho_0(x,y) + \eps\rho_1(x,y) + \ldots \,\,.\label{ansatz2}
\end{alignat}    
\end{subequations}
We also make the key assumption that the function $\bar\rho_0$ is smooth and  strictly positive for every $x \in \R^d$, $\bar \rho_0>0$.  

Plugging \eqref{ansatz}  into (\ref{eq:FP0}) and equating different powers of $\eps$, we obtain a hierarchy of equations; in particular, by equating the terms of order $\eps^{-1}$ and the order one terms, we obtain
\begin{subequations}\label{eq:epsis}
    \begin{alignat}{2}\label{eq:epsi-1}
    0 & = - \cL_y' (\bar{\rho}_0(x) \rho_0(x,y))\\\label{eq:epsi0}
    \cL_x'  (\bar{\rho}_0(x) \rho_0(x,y)) & = - \cL_y' ( \bar{\rho}_0(x) \rho_1(x,y) + \bar{\rho}_1(x) \rho_0(x,y) ) \,.
    \end{alignat}
\end{subequations}
Since $\bar{\rho}_0>0$ is independent of $y$ and the kernel of $\cL_y'$ is the one-dimensional subspace of $L^2(dy)$ spanned by $\rho^{(x)}$, equation (\ref{eq:epsi-1}) implies 
$$
\rho_0(x,y)=h(x)\rho^{(x)}(y)+ \tilde{h}(x)
$$
for some functions $h, \tilde h$ which depend on the $x$ variable only. Imposing further the requirement that the integral of $\rho_0$ in the $y$ variable should be equal to one, for every $x \in \R^d$ (which is a natural requirement  in view of  the fact that $\rho^\eps(y\vert x)$ enjoys the same property, for every $\eps>0$ and every $x\in \R^d$), we conclude 
\begin{equation}
    \rho_0(x,y)=\rho^{(x)}(y)\,.
\end{equation} 
The above now implies two facts. First, equation \eqref{ansatz2} becomes
\begin{equation}\label{eqn:expansion2}
\rho^\eps(y|x) = \rho^{(x)}(y) + \eps\rho_1(x,y) + \ldots \,\,.
\end{equation}
Secondly,  equation (\ref{eq:epsi0}) simplifies to  
\begin{equation}
\label{eq:rho1}
     -   \cL_y' (\bar{\rho}_0(x)\rho_1(x,y))=\cL_x'  (\bar{\rho}_0(x) \rho^{(x)}(y)) \,.
\end{equation}
We view the above as a Poisson equation for the operator $\cL_y'$ in the unknown $\rho_1$. Integrating both sides of \eqref{eq:rho1} in the $y$ variable one can see that a necessary condition for  \eqref{eq:rho1} to admit a solution is that the right hand side averages to zero (in $y$). This yields, as a by-product, a characterisation of $\bar{\rho}_0$ as the equilibrium distribution with respect to the averaged dynamics,
since 
\begin{equation}\label{eq:solvability}
    \cL'_A\, \bar{\rho}_0(x)  = \int  \cL_x' (\bar{\rho}_0(x) \rho^{(x)}(y))\,dy = 0\,.
\end{equation}


%
Formally, if $\rho_1(x,y)$ is a well-behaved function, then   \eqref{eqn:expansion2} states that $\rho^\eps(x,y)\to \rho^{(x)}(y)$ as $\eps\to 0$.    The properties of the function $\rho_1(x,y)$ can be deduced from the fact that it is a solution of the Poisson equation \eqref{eq:rho1}, which are well studied in the literature, see e.g. \cite{pardoux2003poisson, crisan2022poisson, weinan2005analysis, pavliotis2008multiscale} (without claim to completeness of references) or also \cite{stoltz2018longtime}, which deals with the rigorous proof of related asymptotic problems. 

 To turn the above formal expansion into an actual proof one needs to combine these results on Poisson equations together with a more careful study of the difference $\rho^{\eps}-\rho^{(x)}(y) - \eps \rho_1(x,y)$, which needs to be shown to converge to zero as $\eps \rightarrow 0$. While not too difficult, this is not within the scope of this paper.  

\begin{Example}\label{example:cond2aux}
We consider the following slight modification of of Example \ref{example:short version cont'd}:
\begin{subequations}\label{eq:sfeps2}
\begin{align}
        dX^\eps_t &= -\left(X^\eps_t - Y^\eps_t\right) dt  + \sqrt{2}dU_t\,,\quad X^\eps_0=x\\
dY^\eps_t &= -\frac{1}{\eps}Y^\eps_t \,  dt + \sqrt{\frac{2}{\eps}} \, dW_t \,,\quad Y^\eps_0=y \,.
    \end{align}
  \end{subequations}
This dynamics has a Gaussian invariant measure 
\[
\rho^\eps=\cN(0,\Sigma^\eps)\,,\quad \Sigma^\eps=\begin{pmatrix}
    \frac{1+2\eps}{1+\eps} & \frac{\eps}{1+\eps} \\ \frac{\eps}{1+\eps} & 1
\end{pmatrix}\,,
\]
with strictly positive limit density $\cN(0,I)$. The corresponding marginal and conditional distributions, 
\[
\bar{\rho}^\eps = \cN\left(0,\frac{1+2\eps}{1+\eps}\right)
\]
and 
\[
\rho^\eps(\cdot|x) = \cN\left(\frac{\eps x}{1+2\eps},1-\frac{\eps^2}{1+3\eps+4\eps^2}\right), 
\]
converge to the correct limit distributions $\bar{\rho}=\cN(0,1)$ and $\rho^{(x)}=\cN(0,1)$ as $\eps\to 0$. The projected equation for \emph{finite} $\eps$ is given by 
\begin{equation}
        dX^\eps_P(t) = -X^\eps_P(t)\left(1 - \frac{\eps}{1+2\eps}\right) dt  + \sqrt{2}dU(t)\,,\quad X^\eps_P(0)=x
\end{equation}
and it follows by It\^o's formula and a standard Gronwall estimate that, as $\eps\to 0$, the projected dynamics converges pathwise to the solution of the averaged equation,
\begin{equation}
        dX_A(t) = -X_A(t)dt  + \sqrt{2}dU(t)\,,\quad X_A(0)=x\,,
\end{equation}
on any finite time-interval\,.
\end{Example}

\section{Statement and Proof of Theorem \ref{thm:projection method works}}\label{sec: statement and proof of Thm 6.2}

\subsection{Non-autonomous SDEs}
\label{subsec: non-autonomous SDEs main text} We have so far only informally stated the results of \cite{angiuli2013hypercontractivity, cass2021long}. Here we extend them slightly, see Theorem \ref{thm:appendixnonautonomous} below, which we state here and prove in Appendix \ref{appendix:long time behaviour of non-autonomous SDEs}. The extension is minimal, and only needed to fit our framework, see Note \ref{noteappendix:note on assumption non autonomous SDEs}.

\begin{assumption}\label{ass:Lunardiappendix} 
The coefficients $B, \sigma$ of \eqref{stratonovich non-autonomous general SDE} are smooth in both arguments and the diffusion coefficient $\sigma$ is globally Lipshitz. Moreover:
		\begin{enumerate}[label=\textnormal{[C\arabic*]},ref={[C\arabic*]}]
		\item\label{ass:unformellipticity appendix} The diffusion coefficient $\sigma$ is uniformly elliptic, i.e. there exists a constant 
  $\eta>0$, independent of $(t,x) \in \R_+ \times \R^d$, such that
  $$
 \eta |\xi|^2 \leq \langle\sigma(t,x) \xi, \xi \rangle, \quad \mbox{for every } \xi \in \R^d \,.
  $$ 
  \item\label{ass:Lyapcondappendix} There exist constants $c_1\geq 0, c_2>0$ and a function $\varphi\in C^2(\R^d)$ with compact level sets such that 
  $$
  \cL_t \varphi (x) \leq c_1-c_2 \varphi(x) \quad \mbox{for every } x \in \R^d, 
  $$
  where $\cL_t$ is the generator of the dynamics \eqref{stratonovich non-autonomous general SDE}, namely
  $$
  \cL_t \psi(x)= B(t,x) \cdot \nabla_x\psi(x)+\sum_{i=1}^d\sigma\col(t,x) \cdot\nabla_x(\sigma\col(t,x)\cdot \nabla_x\psi (x)) \,.
  $$
\item\label{ass:ObtuseAngleConditionappendix} There exists  a constant $\lambda_0>0$ such that the following holds for every function $h\colon\R^d\rightarrow \R$ sufficiently smooth,   for every $i=1, \dots, d$, and for every $x \in \R^d$:
$$
\left([\sigma\col \cdot \nabla_x, B\cdot \nabla_x] h(x)- 
(\pa_t\sigma\col )\cdot\nabla_x h(x) \right) \left( \sigma\col \cdot \nabla_x h(x)\right)
\leq - \lambda_0 \left\vert \sigma\col\cdot \nabla_xh(x)\right\vert^2\,,  
$$
where the brackets $[\cdot, \cdot]$ in the above denote the commutator between two differential operators. More explicitly:
$$
[\sigma\col \cdot \nabla_x, B\cdot \nabla_x] h(x) = 
\sum_{j=1}^d \sigma_{ji}(t,x)\pa_{x_j}(B^j(t,x)\pa_{x_j}h) - 
B^j(t,x)\pa_{x_j}(\sigma_{ji}(t,x)\pa_{x_j}h) \,.
$$
\item\label{ass:coefficients have limit appendix}
There exist smooth functions $\bar B, \bar\sigma\col\colon\R^d\rightarrow\R^d$, such that
$$
\lim_{t\rightarrow \infty} B(t,x) = \bar B(x), \quad \lim_{t\rightarrow \infty}\sigma\col(t,x)=\bar\sigma\col(x), \quad \mbox{for every } x \in \R^d, i=1, \dots, d. 
$$
   \end{enumerate}  
   \end{assumption}   
In what follows we denote by $\mathcal X_{s,x}(t)$ the solution  of \eqref{stratonovich non-autonomous general SDE} at time $t$,  when the dynamics was started at time $s\geq 0$    from the initial condition $x \in \R^d$, i.e. $\cX_{s,x}(s)=x$.  

\begin{theorem}\label{thm:appendixnonautonomous}
Let Assumption \ref{ass:Lunardiappendix} hold. Then:

a) There exists a probability measure $\mu$ on $\R^d$ such that for every function $h\colon\R^d\rightarrow \R$ which is continuous and bounded the following limit holds: 
$$
    \lim_{t\rightarrow \infty} \mathbb E[h(\cX_{s,x}(t))] = \int_{\R^d} h(x) \mu(dx) \,, \qquad \mbox{for every } s\geq0, x\in\R^d \,.
$$
b) The dynamics \eqref{stratonovich autonomous limiting SDE} is well posed and it 
admits a unique invariant measure, which is precisely the measure $\mu$. Furthermore the process \eqref{stratonovich autonomous limiting SDE} converges weakly to $\mu$, that is, 
$$
\lim_{t\rightarrow \infty} \mathbb E [h(\bar{\mathcal X}_t) ] = \int_{\R^d} h(x) \mu(x) dx, 
$$
for every $h\colon\R^d\rightarrow \R$ which is continuous and bounded (and irrespective of the initial datum $\bar{\mathcal X}_0$). 
\end{theorem}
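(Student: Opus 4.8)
The plan is to obtain Theorem~\ref{thm:appendixnonautonomous} by adapting the long-time analysis of non-autonomous SDEs carried out in \cite{angiuli2013hypercontractivity, cass2021long}: I would recall their strategy, verify that Assumption~\ref{ass:Lunardiappendix} supplies the needed inputs, and isolate the one place where an extension is actually required — namely allowing the diffusion matrix $\sigma$ in \eqref{stratonovich non-autonomous general SDE} to depend on the state variable and to be merely globally Lipschitz (cf.\ Note~\ref{noteappendix:note on assumption non autonomous SDEs}). Throughout, let $(P_{s,t})_{t\ge s}$ denote the two-parameter evolution operator, $P_{s,t}h(x)=\bE[h(\cX_{s,x}(t))]$, and let $(\bar P_t)_{t\ge 0}$ be the semigroup of the autonomous limit \eqref{stratonovich autonomous limiting SDE}. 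Smoothness of the coefficients together with uniform ellipticity \ref{ass:unformellipticity appendix} gives, by standard parabolic regularity, that $P_{s,t}$ (for $t>s$) and $\bar P_t$ (for $t>0$) map bounded measurable functions into smooth ones and are strong Feller, which makes the Bernstein-type a priori estimates below admissible after a routine localisation against the Lyapunov function $\varphi$. Condition \ref{ass:Lyapcondappendix} yields the uniform bound $\sup_{t\ge s}\bE[\varphi(\cX_{s,x}(t))]\le \varphi(x)+c_1/c_2$ (and the analogue for $\bar P_t$), hence tightness of $\{\mathrm{Law}(\cX_{s,x}(t))\}_{t\ge s}$ and of $\{\bar P_t(x,\cdot)\}_{t\ge0}$, integrability of $\varphi$ against any invariant measure, and non-explosion — so that \eqref{stratonovich autonomous limiting SDE} is well posed.

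The heart of the argument is an exponential gradient estimate. Differentiating in $x$ the backward Kolmogorov equation satisfied by $v(s,x)=P_{s,t}h(x)$ and applying the parabolic maximum principle to the auxiliary function $\sum_i|\sigma^{[i]}(s,x)\cdot\nabla_x v(s,x)|^2$, condition \ref{ass:ObtuseAngleConditionappendix} — which is designed precisely so as to dominate the commutator term together with the $(\partial_t\sigma^{[i]})\cdot\nabla_x v$ correction produced by the time- and state-dependence of $\sigma$ — yields
\[
\sum_{i=1}^d\bigl|\sigma^{[i]}(s,x)\cdot\nabla_x(P_{s,t}h)(x)\bigr|^2\ \le\ e^{-2\lambda_0(t-s)}\Bigl(P_{s,t}\,\textstyle\sum_{i}\bigl|\sigma^{[i]}\cdot\nabla h\bigr|^2\Bigr)(x),
\]
and, invoking \ref{ass:unformellipticity appendix} to pass from the $\sigma^{[i]}$-directional derivatives to the full gradient, $\sup_x|\nabla_x(P_{s,t}h)(x)|\le Ce^{-\lambda_0(t-s)}$ for Lipschitz $h$, with $C$ uniform; the same computation on the autonomous equation gives $\sup_x|\nabla\bar P_th(x)|\le Ce^{-\lambda_0 t}$. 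From this last estimate, together with tightness and the Feller property, the standard Krylov–Bogolyubov argument (for existence) combined with the contraction (for uniqueness, by evaluating two invariant measures on $\bar P_t h$ with $h$ Lipschitz, splitting off a compact set, and letting $t\to\infty$) shows that $\bar P_t$ admits a \emph{unique} invariant measure $\mu$ and that $\bar P_t h(x)\to\int h\,d\mu$ for every bounded continuous $h$ and every $x$; this is part b).

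To transfer this to the non-autonomous flow, i.e.\ to prove part a), I would use \ref{ass:coefficients have limit appendix}: continuous dependence of SDE solutions on their coefficients, together with the uniform moment bound, gives $P_{s,s+\tau}h\to\bar P_\tau h$ locally uniformly as $s\to\infty$, for each fixed $\tau>0$. Splitting a long interval $[s,t]$ into an initial portion — over which the gradient contraction for $P_{s,\cdot}$ washes out the dependence on the starting time $s$ and on the starting point, up to an error on a compact set controlled by tightness — followed by a final window of length $\tau$ on which $P_{\cdot,\cdot+\tau}$ is uniformly close to $\bar P_\tau$, and then letting $\tau\to\infty$ so that $\bar P_\tau h$ is close to $\int h\,d\mu$, one obtains $\lim_{t\to\infty}P_{s,t}h(x)=\int h\,d\mu$, uniformly in $s\ge0$ and locally uniformly in $x$. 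This is assertion a), with the same $\mu$ as in b).

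I expect the main obstacle to be precisely the gradient estimate in this generality: when $\sigma=\sigma(t,x)$ is only Lipschitz, the second differentiation is not a priori licit, so the Bernstein computation must be run on a sequence of regularised/truncated problems and passed to the limit, and one must check that every extra term generated by $\nabla_x\sigma$ and $\partial_t\sigma$ is absorbed by the left-hand side of \ref{ass:ObtuseAngleConditionappendix} — this is exactly why that condition is stated with the $(\partial_t\sigma^{[i]})\cdot\nabla_x h$ correction built in. A secondary, bookkeeping-type difficulty is making the convergence $P_{s,s+\tau}\to\bar P_\tau$ uniform enough (over the relevant compact sets, with constants independent of $s$) to be chained with the gradient contraction; this is where \ref{ass:Lyapcondappendix} and \ref{ass:coefficients have limit appendix} are used jointly. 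Both points would be carried out in detail in Appendix~\ref{appendix:long time behaviour of non-autonomous SDEs}, closely following \cite{angiuli2013hypercontractivity, cass2021long}.
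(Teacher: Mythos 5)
Your proposal is correct in outline, but it takes a genuinely different route from the paper. The paper's proof is a pure reduction: it invokes \cite[Theorem 6.5]{cass2021long} and checks that conditions \ref{ass:unformellipticity appendix}, \ref{ass:Lyapcondappendix}, \ref{ass:ObtuseAngleConditionappendix} imply the hypotheses [H.1]--[H.3] of that theorem (uniform ellipticity makes the UFG condition there hold trivially, with $\mathcal{A}=\{1,\dots,d\}$ and $m=1$), and then argues that the remaining hypothesis [H.4] (convergence of the driving parameter $\zeta_t$ in the system \eqref{eqn:SDEappendix}--\eqref{eqn:ODEappendix}) can be replaced by \ref{ass:coefficients have limit appendix}, because the only point of the cited proof where it enters still goes through thanks to the semigroup derivative estimate guaranteed by \ref{ass:ObtuseAngleConditionappendix}. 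You instead reconstruct the machinery behind that theorem directly in the uniformly elliptic setting: Bernstein-type exponential decay of the $\sigma$-directional derivatives of $P_{s,t}h$ from \ref{ass:ObtuseAngleConditionappendix}, Lyapunov tightness from \ref{ass:Lyapcondappendix}, Krylov--Bogolyubov plus the gradient contraction for existence and uniqueness of $\mu$ for \eqref{stratonovich autonomous limiting SDE}, and a splitting of $[s,t]$ combined with \ref{ass:coefficients have limit appendix} to transfer the conclusion to the two-parameter semigroup; this is exactly the mechanism the paper only describes informally in Note \ref{noteappendix:note on assumption non autonomous SDEs}. What the citation route buys is that the delicate steps you yourself flag are outsourced to \cite{angiuli2013hypercontractivity,cass2021long}; what your route buys is a self-contained argument that never needs the UFG formalism. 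Two caveats if you carry it out: first, your passage from the directional bound to $\sup_x|\nabla_x(P_{s,t}h)(x)|\le Ce^{-\lambda_0(t-s)}$ with a uniform constant is not automatic when $\sigma$ is only globally Lipschitz (hence possibly unbounded), since $P_{s,t}\sum_i|\sigma^{[i]}\cdot\nabla h|^2$ need not be bounded even for Lipschitz $h$ --- the cited works (and the uniqueness argument) work with the directional estimates themselves, and you should too; second, \ref{ass:coefficients have limit appendix} is only a pointwise-in-$x$ limit, so the step $P_{t-\tau,t}h\to\bar P_\tau h$ via continuous dependence on coefficients needs either a locally uniform upgrade of that convergence or a more careful use of the moment and tightness bounds (a point which the paper's own reduction also leans on the structure of \cite{cass2021long} to handle). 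Neither caveat invalidates your strategy, but both must be addressed to make the Bernstein and transfer steps rigorous.
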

The proof of Theorem \ref{thm:appendixnonautonomous} 
can be found in Appendix \ref{appendix:long time behaviour of non-autonomous SDEs}. In the note below we just comment on the assumptions of Theorem \ref{thm:appendixnonautonomous}. 
\begin{note}\label{noteappendix:note on assumption non autonomous SDEs}{\textup{Theorem \ref{thm:appendixnonautonomous} is a slight extension of the results of \cite[Section 6]{angiuli2013hypercontractivity} to the case when the diffusion coefficient is globally Lipshitz rather than bounded and it is a function of both time and space rather than a function of time only. This small extension requires no new ideas and can be achieved leveraging on \cite[Section 6]{cass2021long}.  \\
The non-autonomous evolution \eqref{stratonovich non-autonomous general SDE} gives rise to a two-parameter semigroup $\cQ_{s,t}$, namely
$$
(\cQ_{s,t}h)(x):= \mathbb E h(\cX_{s,x}(t)), \quad h \in C_b(\R^d) \,.
$$
When working with non-autonomous SDEs the concept of invariant measure is replaced by the concept of {\em evolution system of measures} (ESM), see e.g. \cite{da2007ornstein, angiuli2013hypercontractivity}. A family $\{\mu_t\}_{t\geq 0}$ is an ESM for the two-parameter semigroup $\cQ_{s,t}$ if 
$$
\int_{\R^d} \cQ_{st}h (x) \mu_s(dx) = \int_{\R^d} h(x) \mu_t(dx), 
$$
for every $0\leq s\leq t$ and $h \in C_b(\R^d)$. Assumption \ref{ass:Lyapcondappendix} ensures the existence of an evolution system of measures $\{\mu_t\}_{t\geq 0}$ and it moreover implies that such a system is tight. Assumption \ref{ass:ObtuseAngleConditionappendix} then implies uniqueness of the ESM. Since such an ESM is tight, one can take (up to subsequences) the limit of $\mu_t$, which is then the natural candidate for the limiting behaviour of the semigroup $Q_{st}$. Finally, letting $t$ to infinity in the inequality of Assumption \ref{ass:Lyapcondappendix} one can see that such an assumption implies that the function $\varphi$ is a Lyapunov function also for the generator of \eqref{stratonovich autonomous limiting SDE}, hence the existence of an invariant measure for \eqref{stratonovich autonomous limiting SDE}. Such an invariant measure is unique because our uniform ellipticity assumption \ref{ass:unformellipticity appendix} implies the uniform ellipticity of $\bar \sigma$. The proof of the theorem is then concluded by showing that $\mu$ coincides with the invariant measure of \eqref{stratonovich autonomous limiting SDE}.\\
The fact that Assumption \ref{ass:ObtuseAngleConditionappendix} implies uniqueness of the ESM is perhaps the least trivial statement in this discussion, so let us comment a little more on this.  First of all note that if $\sigma=\sigma(t)$ and it is bounded  (which is the setting of \cite{angiuli2013hypercontractivity}) then   Assumption \ref{ass:ObtuseAngleConditionappendix}  boils down to \cite[Condition (2.3)]{angiuli2013hypercontractivity} (this is perhaps easier to see in the simpler case when $d=1$ and $\sigma$ is constant). Assumption \ref{ass:ObtuseAngleConditionappendix}  is a specific form of a condition which was termed in \cite{crisan2016pointwise} the {\em obtuse angle condition} (OAC). The OAC implies that the space-derivatives of the semigroup associated to the dynamics decay exponentially fast in time. If the semigroup is a one-paramter semigroup (i.e. associated to an autonomous evolution) then this decay of the derivatives has been shown to imply uniqueness of the invariant measure \cite{cass2021long, crisan2021uniform}. This can be seen as a simple consequence of the fundamental theorem of calculus, see e.g. \cite{cass2021long}.  It is then reasonable that, when dealing with two-parameter semigroups, this decay of derivatives implies uniqueness of  the evolution system of measures, as shown in the works \cite{kunze2010nonautonomous, angiuli2013hypercontractivity} and references therein. 
    }}
\end{note}

To demonstrate in practice what assumption \ref{ass:ObtuseAngleConditionappendix} means, consider the simple one-dimensional  SDE
$$
dX_t = -f(t) X_t dt + \sigma dW_t \, 
$$
where $f$ is a real valued function, assumed to be good enough that the above SDE is well posed and $\sigma$ is a strictly positive constant. Then, in the notation of the above theorem, we have $d=1$, $B(t,x)= -f(t) x$ and $\sigma^{[1]}=\sigma$. Hence Assumption \ref{ass:ObtuseAngleConditionappendix} is satisfied if the inequality 
$$
\left([\sigma \partial_x, -f(t) x \partial_x] h(x)\right) (\sigma \partial_x h) \leq -\lambda_0 |\sigma \partial_x h (x)|^2, \quad x \in \R\, ,
$$
holds for some $\lambda_0>0$ and  for every sufficiently smooth function $h$. After calculating the bracket in the above, this inequality boils down to 
$$
f(t)\left\vert \sigma \pa_x h \right \vert^2 \geq \lambda_0 \left\vert \sigma \pa_x h \right \vert^2 \,, 
$$
which, as expected, is satisfied if  $f(t)\geq \lambda_0$.

\subsection{Statement and proof of Theorem \ref{thm:projection method works}}

We are finally in a position to state the main theorem of this section.
\begin{theorem}\label{thm:projection method works} Suppose the following three assumptions hold:

i) The SDE \eqref{eq:sfep=1} has smooth coefficients, it is uniformly elliptic and it admits a unique weak solution. Moreover such an SDE is ergodic,  in the sense that it admits a unique invariant measure, $\rho$,  and satisfies assumption ii)  of Proposition \ref{Prop: extension gyongy}. 

ii) The coefficients of the  SDE \eqref{eqn:gyongy-coarse-graining} are smooth and the SDE admits a unique weak solution (and the law of  such a solution has a density which is absolutely continuous with respect to the Lebesgue measure). 

iii) The coefficients of   the non-autonomous SDE  \eqref{eqn:gyongy-coarse-graining} satisfy Assumption \ref{ass:Lunardiappendix} of Theorem \ref{thm:appendixnonautonomous}. 

Then the dynamics \eqref{eq:sfgenaralprojection} admits a unique  invariant measure and such an invariant measure coincides precisely with the marginal   $\bar\rho$ (defined in \eqref{eqn:bar rho def}) of the invariant measure $\rho$ of \eqref{eq:sfep=1}. Moreover,  
    $$
    \lim_{t\rightarrow \infty} \mathbb E ( h(X_P(t)) )= \int_{\R^d} h(x) \bar{\rho}(x) dx\, ,
    $$
    for every $h \in C_b(\R^d)$. 
\end{theorem}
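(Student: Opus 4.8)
The plan is to use the Gy\"ongy dynamics $X_G$ of \eqref{eqn:gyongy-coarse-graining} as a bridge: we regard $X_G$ as a non-autonomous SDE of the form \eqref{stratonovich non-autonomous general SDE} whose autonomous ``limit equation'' \eqref{stratonovich autonomous limiting SDE} is exactly the projection dynamics \eqref{eq:sfgenaralprojection}, and then apply Theorem \ref{thm:appendixnonautonomous}. First I would invoke Proposition \ref{Prop: extension gyongy}: assumptions i) and ii) of the present theorem are precisely its hypotheses, so $\mathrm{Law}(X_G(t))=\bar\rho_t$ for every $t\ge 0$, where $\rho_t$ is the law of the solution of \eqref{eq:sfep=1} and $\bar\rho_t$ its $x$-marginal. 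Next, since \eqref{eq:sfep=1} is uniformly elliptic with smooth coefficients and has a unique invariant measure $\rho$, standard ergodicity (strong Feller property, irreducibility, Doob's theorem) gives $\rho_t\to\rho$, say in total variation; hence $\bar\rho_t\to\bar\rho$ weakly, i.e.\ $X_G(t)$ converges weakly to $\bar\rho$. Recall also that under the standing assumptions $\rho$, and therefore $\bar\rho$, is smooth and strictly positive.

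The core step is to show that the autonomous limit equation associated with \eqref{eqn:gyongy-coarse-graining} is indeed \eqref{eq:sfgenaralprojection}. Writing $\rho_t(y\vert x)=\rho_t(x,y)/\bar\rho_t(x)$, interior parabolic (Schauder) estimates for the Fokker--Planck equation $\pa_t\rho_t=\cL'\rho_t$, uniform in $t$ under the smoothness and ellipticity hypotheses, upgrade the convergence $\rho_t\to\rho$ to convergence in $C^\infty_{\mathrm{loc}}(\R^n)$. Since $\bar\rho>0$, this transfers to $\rho_t(\cdot\vert x)\to\rho(\cdot\vert x)$ in $C^1_{\mathrm{loc}}$ jointly in $(x,y)$; combined with uniform-in-$t$ tail control of $\alpha\alpha^T(x,y)\rho_t(x,y)$ and its $x$-derivatives (of the same Lyapunov type invoked in assumption ii)), one may differentiate under the integral sign to conclude $f_G(t,\cdot)\to f_P(\cdot)$ and $\alpha_G(t,\cdot)\to\alpha_P(\cdot)$ in $C^1_{\mathrm{loc}}(\R^d)$, hence also $\alpha_G^{1/2}(t,\cdot)\to\alpha_P^{1/2}(\cdot)$ in $C^1_{\mathrm{loc}}$ by smoothness of the positive-definite square root on positive-definite matrices. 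Passing these through the It\^o--Stratonovich correction shows that the Stratonovich drift and diffusion columns of \eqref{eqn:gyongy-coarse-graining} converge to those of \eqref{eq:sfgenaralprojection}; in particular the limits $\bar B$ and $\bar\sigma\col$ postulated in condition \ref{ass:coefficients have limit appendix} of Assumption \ref{ass:Lunardiappendix} are exactly the Stratonovich data of \eqref{eq:sfgenaralprojection}. Thus the limit equation \eqref{stratonovich autonomous limiting SDE} for \eqref{eqn:gyongy-coarse-graining} is \eqref{eq:sfgenaralprojection}.

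Now I would apply Theorem \ref{thm:appendixnonautonomous}. By assumption iii), \eqref{eqn:gyongy-coarse-graining} satisfies Assumption \ref{ass:Lunardiappendix}, so there exists a probability measure $\mu$ on $\R^d$ with $\mathbb E[h(X_G(t))]\to\int_{\R^d}h\,d\mu$ for every $h\in C_b(\R^d)$ (part a), applied with $s=0$ and the deterministic initial datum of $X_G$), and moreover \eqref{eq:sfgenaralprojection} is well posed, admits $\mu$ as its \emph{unique} invariant measure, and $\mathbb E[h(X_P(t))]\to\int_{\R^d}h\,d\mu$ for every $h\in C_b(\R^d)$, irrespective of the initial datum (part b)). Comparing with the first paragraph, $\mathbb E[h(X_G(t))]=\int h\,d\bar\rho_t\to\int h\,d\bar\rho$, so $\int h\,d\mu=\int_{\R^d} h(x)\,\bar\rho(x)\,dx$ for all $h\in C_b(\R^d)$, whence $\mu=\bar\rho$. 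Therefore $\bar\rho$ is the unique invariant measure of the PM dynamics \eqref{eq:sfgenaralprojection} and $\mathbb E[h(X_P(t))]\to\int_{\R^d}h(x)\bar\rho(x)\,dx$ for every $h\in C_b(\R^d)$, which is the assertion.

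The main obstacle is the core step above: upgrading $\rho_t\to\rho$ to convergence of the \emph{Stratonovich} coefficients of the Gy\"ongy SDE. Pointwise convergence of $f_G$ and $\alpha_G$ does not suffice, because the It\^o--Stratonovich correction brings in first-order $x$-derivatives of $\alpha_G^{1/2}$; one genuinely needs $C^1_{\mathrm{loc}}$ convergence of the family $x\mapsto\rho_t(\cdot\vert x)$ together with uniform-in-$t$ integrability as $|y|\to\infty$ in order to differentiate under the integral. Both are consequences of interior parabolic estimates and Lyapunov tail bounds under the present hypotheses, but they are where essentially all the analytic work sits; everything else is bookkeeping built on Proposition \ref{Prop: extension gyongy} and Theorem \ref{thm:appendixnonautonomous}.
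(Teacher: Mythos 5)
Your proposal is correct in its overall architecture and it is the same architecture as the paper's: assumptions i)--ii) feed Proposition \ref{Prop: extension gyongy} so that $\mathrm{Law}(X_G(t))=\bar\rho_t$ for all $t$, assumption iii) feeds Theorem \ref{thm:appendixnonautonomous} with \eqref{eqn:gyongy-coarse-graining} playing the role of the non-autonomous equation and \eqref{eq:sfgenaralprojection} that of its autonomous limit, and the two asymptotics are matched to identify the limiting measure $\mu$ with $\bar\rho$. The genuine difference is your ``core step''. The paper does not prove that the (Stratonovich) coefficients of the Gy\"ongy equation converge to those of the PM equation: this is deliberately folded into hypothesis iii), i.e.\ into condition \ref{ass:coefficients have limit appendix} of Assumption \ref{ass:Lunardiappendix}, as the authors state explicitly in Note \ref{note:on theor3_5}, and the proof in the paper is correspondingly two lines long. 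You instead attempt to \emph{derive} this convergence from ergodicity of \eqref{eq:sfep=1} together with interior parabolic estimates and uniform-in-time tail control. That would buy a more self-contained theorem (the limits postulated in \ref{ass:coefficients have limit appendix} would be identified, not assumed), but be aware that it claims more than the stated hypotheses deliver: the uniform-in-$t$ integrability you invoke is not of ``the same Lyapunov type'' as anything actually assumed --- assumption ii) of Proposition \ref{Prop: extension gyongy} only gives decay at infinity of $g\rho_t$ and $\pa_{y}(\beta\beta^T\rho_t)$ for each fixed $t$, and $f,\alpha$ are not assumed bounded --- so your $C^1_{\mathrm{loc}}$ convergence of $f_G,\alpha_G$ and of the It\^o--Stratonovich correction rests on uniform estimates that are sketched rather than proved. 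For the theorem as stated this is harmless, since existence of the limits is part of assumption iii); what both you and the paper pass over quickly is only the identification of those limits with the PM data, for which pointwise convergence of $f_G,\alpha_G$ suffices once \ref{ass:coefficients have limit appendix} is granted. Finally, your first paragraph (strong Feller, irreducibility, Doob to get $\bar\rho_t\to\bar\rho$) usefully makes explicit a convergence that the paper uses implicitly when it asserts, via \eqref{eqn:law equality gyongy}, that $X_G$ has the same long-time behaviour as $X_t$.
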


\begin{proof}[Proof of Theorem \ref{thm:projection method works}] It is a straightforward consequence of Proposition \ref{Prop: extension gyongy} and Theorem \ref{thm:appendixnonautonomous}, using the reasoning outlined in Section \ref{sec:heuristic explanation of theorem}, which we do not repeat here. 
Assumptions i) and ii) imply that     Proposition \ref{Prop: extension gyongy} can be applied. Indeed, if the coefficients of \eqref{eq:sfep=1} are smooth and the SDE is uniformly elliptic then the density  $\rho_t= \rho_t(x,y)$ is strictly positive for every $t\geq 0$. Hence the conditional density $\rho_t(y\vert x)$ (defined as in \eqref{joint and marginal time t}) is strictly positive as well. Moreover, because of the form of the overall diffusion matrix of \eqref{eq:sfep=1}, the uniform ellipticity of \eqref{eq:sfep=1} implies that the matrix $\alpha$ in \eqref{eq:sfxep=1} is itself uniformly elliptic. Using \eqref{eqn:conditional is a prob measure }, a straightforward calculation then shows that also the diffusion matrix $\alpha_G$ is uniformly elliptic, hence the law of the solution of \eqref{eqn:gyongy-coarse-graining} admits again a strictly positive density for every time $t\geq 0$.  Assumption iii) clearly guarantees that we can use 
Theorem \ref{thm:appendixnonautonomous} as well, with $X_G(t)$ in place of \eqref{generalSDEnon-autonomous} and $X_P(t)$ in place of \eqref{stratonovich autonomous limiting SDE}. This concludes the proof.
\end{proof}

\begin{note}\label{note:on theor3_5}{\textup{
Let us make some comments on the assumptions of Theorem \ref{thm:projection method works}.  As it is clear from the proof,  Assumptions i) and ii) are needed to make sure that \eqref{eqn:law equality gyongy} holds, i.e., roughly speaking,  that we can apply the coarse graining method proposed by  G\"yongy. Assumption iii) is instead needed to apply the results on non-autonomous SDEs that we have already mentioned.
\begin{itemize}
\item The reasoning in the proof of Theorem \ref{thm:projection method works} implies that, since \eqref{eq:sfep=1} is unformly elliptic, then $\alpha_G$ is a uniformly elliptic matrix as well. Hence condition \ref{ass:unformellipticity appendix} of Assumption \ref{ass:Lunardiappendix} is automatically satisfied. 
    \item The coefficients of equation \eqref{eqn:gyongy-coarse-graining} are only known implicitly, in the sense that while $f,g,\alpha, \beta$ are known, the conditional density $\rho_t(y\vert x)$ may be not explicitly available, so that checking the validity of Assumption iii) might be difficult in practice, and we would like to be upfront about this. So, similarly to the remarks we have made in Note \ref{note: on gyongy extension},  more work is required to provide the user with conditions on $f, g, \alpha, \beta$ which are sufficient to guarantee that Assumption iii) holds.  This is not straightforward and will be the object of future work. Nonetheless here we observe that, with a calculation completely analogous to those at the end of the previous subsection, assuming system \eqref{eq:sfep=1} is two dimensional (i.e. $d=n-d=1$) with $\alpha(x,y)=\sigma$, then  Assumption \ref{ass:ObtuseAngleConditionappendix} is satisfied by \eqref{eqn:gyongy-coarse-graining} if $\pa_x f_G(t,x) \leq -\lambda_0$, for every $t,x$ and for some $\lambda_0>0$. 
    \item In our heuristic explanation of Theorem \ref{thm:projection method works} we mentioned the convergence of $\rho_t(y \vert x)$ to $\rho(y\vert x)$, as $t\rightarrow \infty$. This condition is slightly hidden in  Theorem \ref{thm:projection method works}, so we emphasize that it is contained in Assumption iii), more precisely in Assumption \ref{ass:Lunardiappendix}, condition \ref{ass:coefficients have limit appendix}. Indeed, assuming for example that $B$ is polynomial of degree $q$, if  $\rho_t(y\vert x)$ converges to $\rho(y\vert x)$ in $q$-Wasserstein distance (for every $x$ fixed), then Assumption \ref{ass:Lunardiappendix}, condition \ref{ass:coefficients have limit appendix} holds. 
\end{itemize}
}
} 
\end{note}

\subsection{Example where the Projection Method fails}\label{sec: counterexample to Gyongy}

Let us consider again the system in Example \ref{example:short version}, this time setting  $\varepsilon=1$. That is, consider the process
\begin{align}
dX_t &= (-X_t+Y_t) dt \label{example:Carsten'sexample1}\\
dY_t &= -Y_t \,  dt + \sqrt{2} \, dW_t \label{example:Carsten'sexample2} \,.
\end{align}
 and let $A$ and $C$ be the matrices
\begin{equation}\label{eq:counterexampleCoeff}
A= 
\left( \begin{array}{cc}
-1 & 1 \\
0 & -1
\end{array}
\right), \quad C = 
\left( \begin{array}{cc}
0 \\
\sqrt{2}
\end{array}
\right) \,. 
\end{equation}
The solution $Z_t=(X_t, Y_t) \in \R^2$ of \eqref{example:Carsten'sexample1}-\eqref{example:Carsten'sexample2} can be written as 
 \begin{equation}\label{eqn:Carstenexample-solution}
     Z_t= \exp(At)Z_0 + \int_0^t \, 
     \exp(A(t-s))C\,d \bf{W}_s \,, 
 \end{equation}
 where ${\bf{W}}_t= (0, W_t)^T$. 
 We will carry out explicit calculations  below, but before doing any calculations we explain what goes wrong in this example. The dynamics \eqref{example:Carsten'sexample1}-\eqref{example:Carsten'sexample2}  is not uniformly elliptic, but it is hypoelliptic and it admits a unique invariant measure $\rho$. Uniqueness is a consequence of the fact that for Ornstein-Uhlenbeck processes hypoellipticity implies uniqueness of the invariant measure. See Lemma \ref{lemma B3} for further properties of this dynamics.  Moreover, again due to hypoellipticity,  even if the initial datum is deterministic, for any $t>0$ the law $\rho_t$ of the process has a density with respect to the Lebesgue measure (on $\R^2$) and the same is true of the invariant measure. So, to summarise, both $\rho_t$ and $\rho$ have a strictly positive smooth density, for every $t>0$. This implies that the marginals $\bar\rho_t$ and $\bar\rho$ have a strictly positive density too. 
 
The definition of the conditional distribution \eqref{joint and marginal time t} also implies that both $\rho_t(\cdot\vert x)$ and $\rho(\cdot\vert x)$ have strictly positive densities on $\R$ for every $x\in\R$ and $t>0$. However, since the diffusion coefficient $\alpha$ in \eqref{example:Carsten'sexample1} is zero, also the diffusion coefficients $\alpha_P$ and $\alpha_G$ of the resulting projection and Gy\"ongy coarse grained dynamics are zero (as they are obtained through \eqref{eqn:projection-coefficients2} and \eqref{eqn:gyongy-averaged-coefficients2}, respectively). That is, the approximations obtained through either one of these methods are deterministic ODEs. Therefore, if the initial datum is deterministic,  the law of the random variables $\xp(t)$ and $\xg(t)$ is a Dirac for any $t>0$ and therefore the law of $\xg(t)$ cannot coincide with the law of $X_t$ and the invariant measure of $\xp$ cannot  be $\bar \rho$. In our example this  compounds with the fact that the equation obtained via GM is not well posed for small $t$, in case of deterministic initial data. 

We make more precise observations below, but this discussion suffices to illustrate the fact that ellipticity of the SDE \eqref{eq:sfxep=1}-\eqref{eq:sfyep=1} is an important requirement in order for the Gy\"ongy and projection methods to produce the desired results.

Let us now give more detail. The unique invariant measure of (\ref{eqn:Carstenexample-solution}) is a mean zero Gaussian, $\rho \sim \mathcal N(0, \Sigma)$, where $\Sigma$ is the unique symmetric positive definite solution of the Lyapunov equation
$A\Sigma + \Sigma A^T = -CC^T$, namely 
\begin{equation}
\Sigma= \frac{1}{2}
\left( \begin{array}{cc}
1 & 1 \\
1 & 2
\end{array}
\right) \,.
\end{equation}

We suppose that $Z_0\sim\cN(m_0,\Sigma_0)$ where the special case of Dirac initial data $Z_0=z$ is included by the choice $m_0=z$ and $\Sigma_0=0$. To simplify calculations a little bit, we take $X_0$   independent of  $Y_0$ and $Z_0$ independent of the Brownian motion $W_t$. 
Then the law $\rho_t$ of $Z_t$ is also Gaussian, $\rho_t \sim \mathcal N(m_t, \Sigma_t)$ where $m_t= \mathbb{E}(Z_t)$, $\Sigma_t:=\mathrm{Cov}(Z_t) = \mathbb E (Z_tZ_t^T) - \mathbb E Z_t \mathbb E Z_t^T$, and the expressions for the mean and the covariance are 
\begin{equation}\label{eq:counterexampleMean}
        m_t  = \begin{pmatrix}
            e^{-t} m_0^x+te^{-t}m^y_0\\ e^{-t}m^y_0
        \end{pmatrix}
\end{equation}
and 
\begin{equation}\label{eq:counterexampleCov}
        \Sigma_t = \frac{1}{2}\begin{pmatrix}
            \displaystyle 1 - e^{-2t}\left(2t^2+2t+1 - 2\Sigma^{xx}_0 - 2t^2\Sigma^{yy}_0\right) & 1 - e^{-2t}\left(2t+1 - 2t \Sigma^{yy}_0\right)\\
           1 - e^{-2t}\left(2t+1 - 2t \Sigma^{yy}_0\right) & 2 - 2e^{-2t}\left(1 - \Sigma_0^{yy}\right) \,.
        \end{pmatrix}
\end{equation}

In  the above  we have used the standard notation
\[
m_t =\begin{pmatrix}
            m^x_t\\ m^y_t
        \end{pmatrix}\,,\quad \Sigma_t = \begin{pmatrix}
    \Sigma^{xx}_t & \Sigma^{xy}_t\\ \Sigma^{yx}_t & \Sigma^{yy}_t
\end{pmatrix},\quad t\ge 0\,
\]
and the expression \eqref{eq:counterexampleCov} has been obtained by direct computation, using the fact that \eqref{eqn:Carstenexample-solution} implies the following expression for the covariance matrix 
\begin{equation}
\Sigma_t =  \int_0^t ds \, e^{A(t-s)}CC^T e^{A^T(t-s)} + e^{At} \mathrm{Cov} (Z_0)  e^{A^Tt} \,, 
\end{equation}
with 
\[
\exp(At) = \begin{pmatrix} e^{-t} & t e^{-t} \\ 0 & e^{-t} \end{pmatrix}.
\]
The $x$-marginal $\bar\rho_t $ of $\rho_t$ is given by
\[
\bar\rho_t = \cN(m_t^x, \Sigma_t^{xx}) =  \cN(e^{-t} m_0^x+te^{-t}m_0^y, \Sigma_t^{xx} ) \,.
\]
Similarly, the marginal $\bar\rho$ of $\rho$ is $\bar \rho = \cN (0, \Sigma^{xx})=\cN(0,1/2)$. 
Hence, $\bar{\rho}_t \rightarrow \bar\rho$ as $t\rightarrow \infty$. 

For the dynamics \eqref{example:Carsten'sexample1}-\eqref{example:Carsten'sexample2} we now want to calculate both the approximation $X_G$ produced via the Gy\"ongy method and the CG version $X_P$ produced via the projection method. To this end we need to compute the conditional distributions $\rho_t(\cdot\vert x) $ and $\rho(\cdot\vert x)$. 
The conditional distribution $\rho_t(\cdot \vert x) $ is 
$$
\rho_t(\cdot\vert x) = \mathcal N(m_t^{y\vert x}, \Sigma_t^{y\vert x}), 
$$
with (e.g.~see \cite[Sec.~8]{matrixCookbook})
\begin{align}
 m_t^{y\vert x} &= m_t^y+ \Sigma_t^{xy}(\Sigma_t^{xx})^{-1}(x-m_t^x)\\
 \Sigma_t^{y\vert x} &= 
 \Sigma_t^{yy} - \Sigma_t^{xy}(\Sigma_t^{xx})^{-1} \Sigma_t^{xy}
\end{align}
giving  
\begin{align}
 m_t^{y\vert x} &= e^{-t}m_0^y+ \frac{1 - e^{-2t}\left(2t+1\right) + 2t e^{-2t}\Sigma^{yy}_0}{1 - e^{-2t}\left(2t^2+2t+1 - 2\Sigma^{xx}_0 - 2t^2\Sigma^{yy}_0\right)}(x-m_t^x)\\
 \Sigma_t^{y\vert x} & = 
 \Sigma_t^{yy} - \Sigma_t^{xy}(\Sigma_t^{xx})^{-1} \Sigma_t^{xy} \,.
\end{align}
Using analogous formulas for $\rho=\mathcal N (0, \Sigma)$, we observe that  
$\rho(\cdot\vert x) \sim \mathcal N(x, 1/2)$, which implies that $\rho_t(\cdot\vert x) \rightarrow \rho(\cdot\vert x)$ as $t \rightarrow \infty$.

Finally, since in our case $f(x,y)=-x+y$ and $\alpha(x,y)=0$, using the expressions \eqref{eqn:projection-coefficients}-\eqref{eqn:projection-coefficients2} and \eqref{eqn:gyongy-averaged-coefficients}-\eqref{eqn:gyongy-averaged-coefficients2}, we obtain $\alpha_P=\alpha_G=0$ and 
$$
f_P(x)= \int_{\R}(-x+y) \rho(dy\vert x) = -x+x=0
$$
 and 
$$
 f_G(t,x) = -x+m_t^{y\vert x} \,.
$$
As a consequence, the projected equation reads  
 \begin{equation}\label{eq:counterexampleXP}
     dX_P=0 \,, \quad X_P(0)=X_0\,, 
 \end{equation}
with the trivial solution $X_P(t)\equiv X_0$ for all $t\ge 0$. 
Clearly, the result of Proposition \ref{lem:bar rho invariant for Lp} still holds for this example (even if the generator $\cL$ is not elliptic). In particular, the calculation in the block of equations after  \eqref{eqn:joint=conditional times marginal} still works for this process. This is  symptomatic of the limitation of Proposition \ref{lem:bar rho invariant for Lp}, which only states that if $\rho$ is invariant for $\cL$ then its marginal $\bar \rho$ is invariant for $\cL_P$. This is necessarily true in this case, as $\cL_P=0$, so any initial distribution is invariant for $\cL_P$. However $\cL_P$ has infinitely many invariant measures, while $\cL$ has only one. In other words, coarse graining by means of the projection method has in this case changed the number of stationary states.


This should be contrasted with the Gy\"ongy approximation that reads 
 \begin{equation}\label{eq:counterexampleXG}
     \frac{d X_G}{dt} = (\phi(t)-1) X_G + e^{-t}m_0^y - \phi(t)\left(e^{-t} m_0^x+te^{-t}m_0^y\right)\,,
 \end{equation}
 where  
\begin{equation}\label{eq:counterexampleXGphi}
    \phi(t) = \frac{1 - e^{-2t}\left(2t+1\right) + 2t e^{-2t}\Sigma^{yy}_0}{1 - e^{-2t}\left(2t^2+2t+1 - 2\Sigma^{xx}_0 - 2t^2\Sigma^{yy}_0\right)} \,.
\end{equation}

If $\Sigma_0^{xx}>0$ the function $\phi$ has the property that $\phi(t)\to 1$ as $t\to\infty$,  $\phi(t)\to 0$ as $t\rightarrow 0$, and it is smooth and bounded. When $\Sigma_0^{xx}=0$ then the limit   as  $t\rightarrow 0$ of $\phi(t)$ depends on $\Sigma_0^{yy}$.   If $\Sigma_0^{yy}=0$ then $\phi(t)\to + \infty$, if $\Sigma_0^{yy}>0$ then $\phi(t)\to - \infty$. 

Bottom line,  if $\Sigma_0^{xx}>0$, the drift in (\ref{eq:counterexampleXG}) is bounded and continuous in $t$ and satisfies a global Lipshitz condition in $x$, which implies that the equation for  
$X_G$ is  well-posed. However, for  initial data that are deterministic in the $x$ component, the drift (\ref{eq:counterexampleXG})  blows up as $t$ tends to zero, so the ODE for the Gy\"ongy approximation is not well posed near the origin.  
As we have pointed out, even if it was globally well posed, the law of its solution would still be a Dirac and hence could not coincide with the law of $X_t$.

The question remains of whether, at least in the case  $\Sigma_0^{xx}>0$ (random initial conditions),   $X_G$ captures the correct dynamics. In Figure \ref{fig:exampleGyongyMeanVar} and Figure \ref{fig:exampleGyongyRhobar} 
we investigate this numerically. Figure \ref{fig:exampleGyongyMeanVar} suggests that, at least for this example, 
 the law of the solution of the Gy\"ongy approximation agrees with the marginal  distribution  $\bar{\rho}_t$ for finite $t>0$. Figure \ref{fig:exampleGyongyRhobar} looks at the long time behaviour of $X_G$, which again is the correct one.

Finally, still in the case $\Sigma_0^{xx}>0$, it makes sense to ask whether $X_G(t)$ and $X_P(t)$ have the same long time behaviour. Figure \ref{fig:exampleGyongy} shows that this is not the case. This is not surprising as the conditions of Theorem \ref{thm:appendixnonautonomous} are not satisfied in this example,  ellipticity being the condition that most obviously fails. 

\begin{figure}
    \centering
        \includegraphics[width=0.65\linewidth]{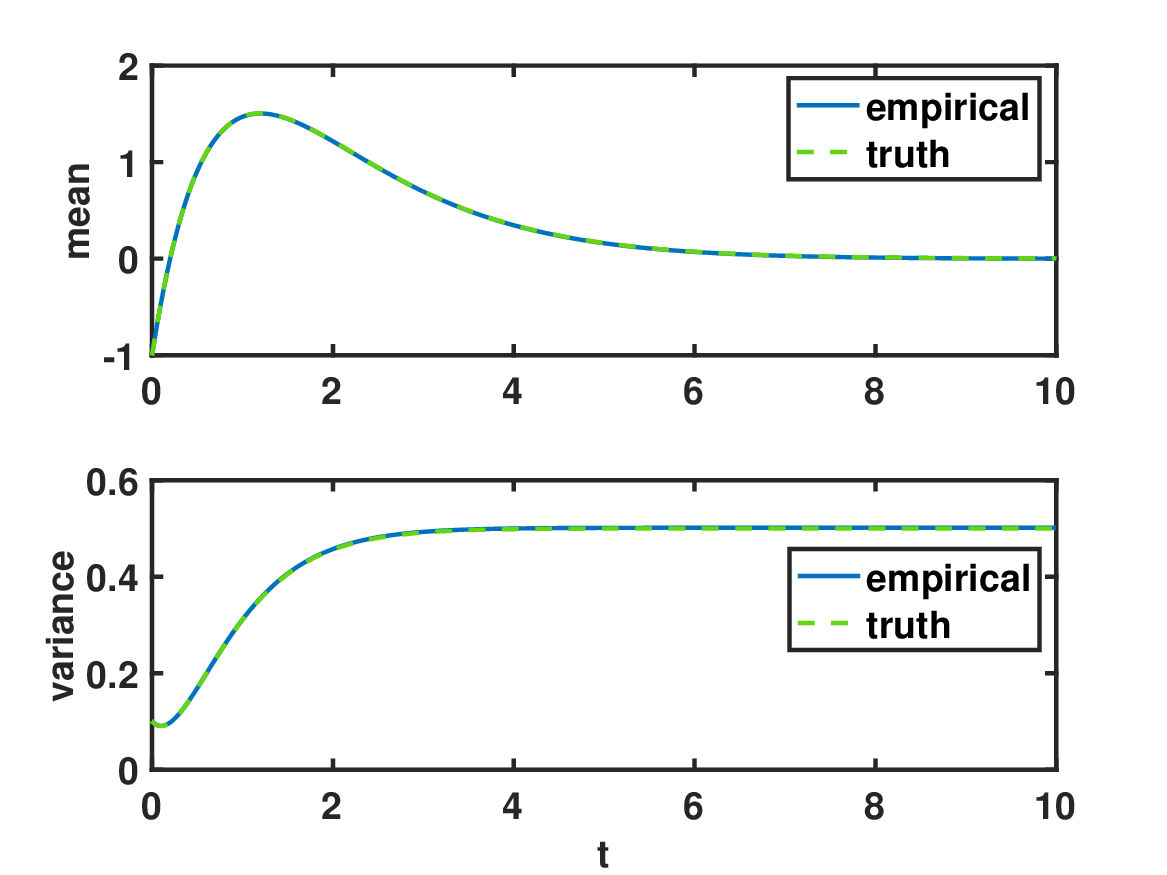}
    \caption{Behaviour of Gyöngy approximation \eqref{eq:counterexampleXG} of the 2-dimensional system (\ref{example:Carsten'sexample1}) for Gaussian initial conditions $X_0\sim\cN(-1,0.1)$ and $Y_0\sim\cN(5,1)$. The plotted mean and variance have been computed by averaging over trajectories starting from $N=10^5$ independent initial conditions $X_0$.}
    \label{fig:exampleGyongyMeanVar}
\end{figure}

All simulations have been carried out using an explicit 8(9)th-order Runge-Kutta pair with interpolation \cite{verner2010numerically}.

\begin{figure}
    \centering
        \includegraphics[width=0.5\linewidth]{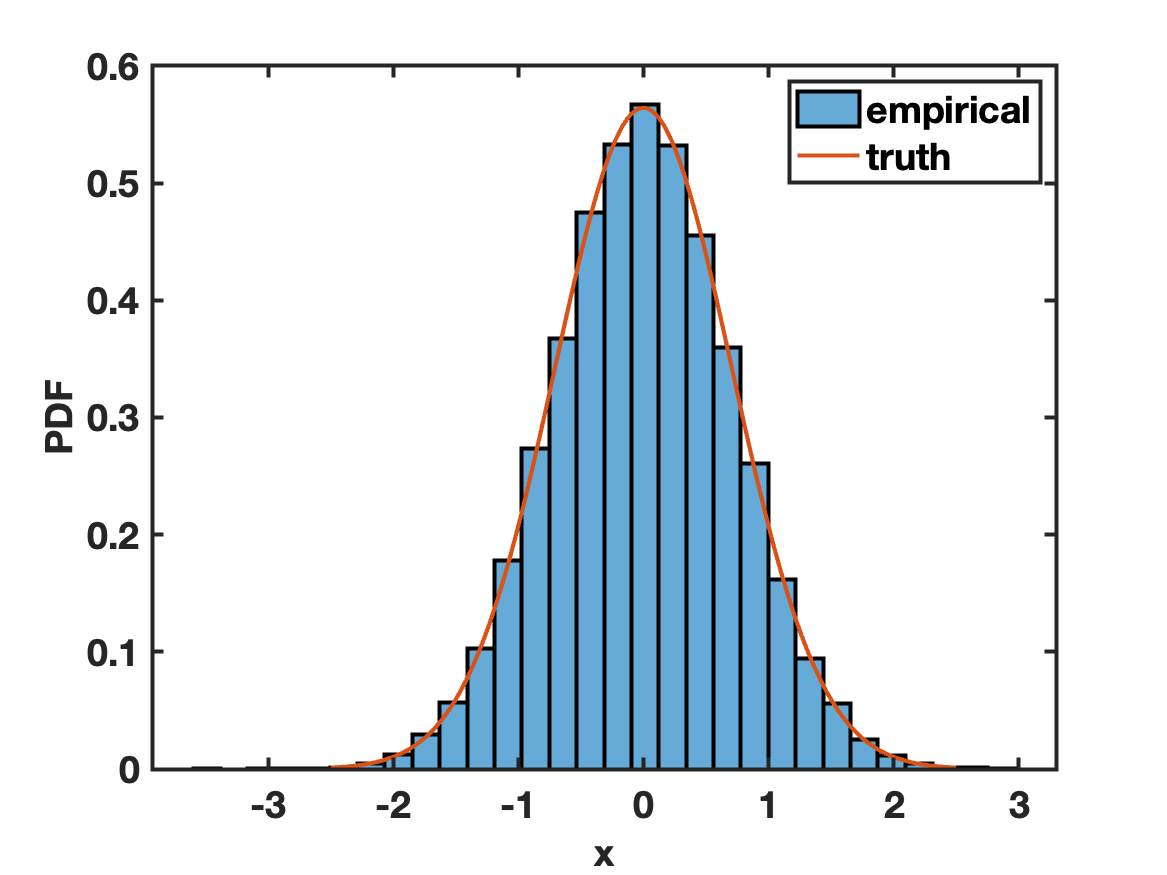}
     \caption{Long term dynamics of the Gyöngy approximation \eqref{eq:counterexampleXG}: The histogram was computed by binning $N=10^5$ trajectories starting from independent  random initial data $X_0\sim\bar{\rho}_0\neq \cN(0,1/2)$ and  evaluated at $t=20$; the red curve shows $\bar{\rho}=\cN(0,1/2)$.}
    \label{fig:exampleGyongyRhobar}
\end{figure}

\begin{figure}
    \centering
    \includegraphics[width=0.495\linewidth]{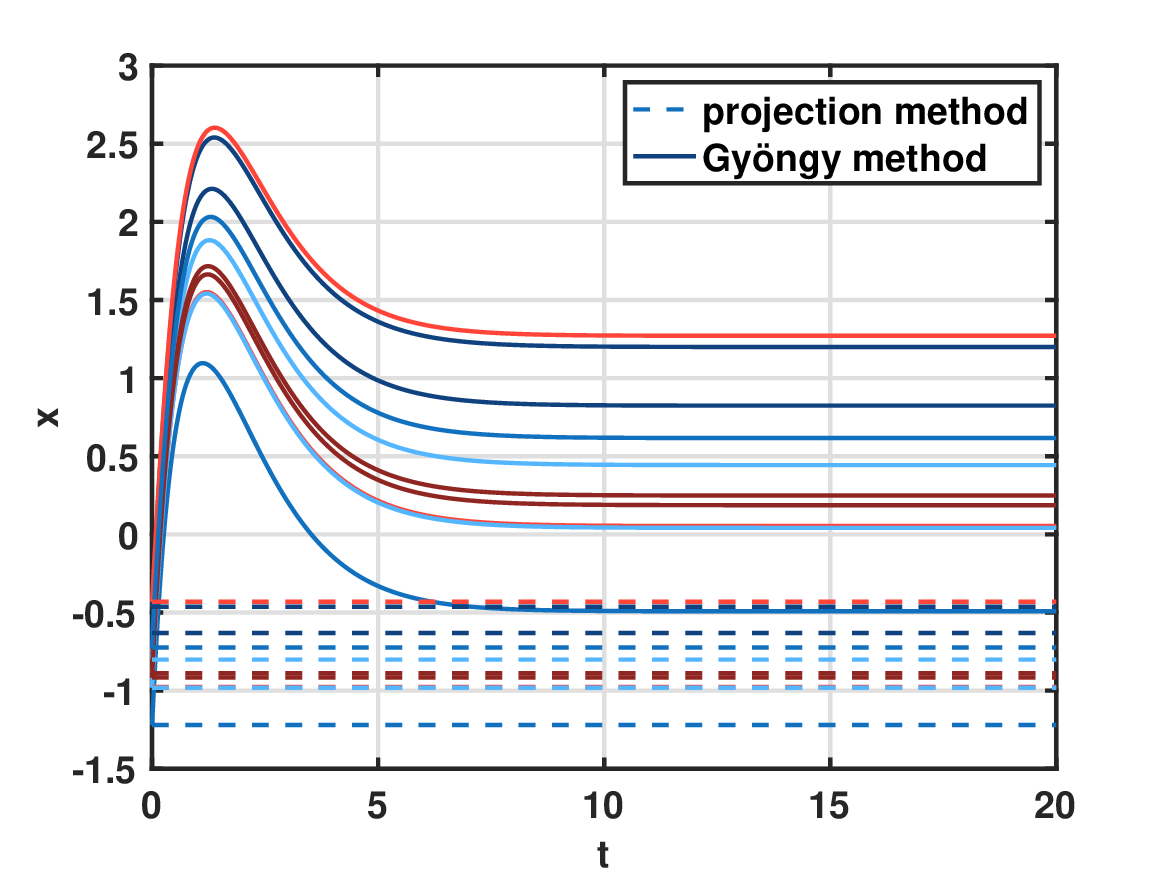}
    \includegraphics[width=0.495\linewidth]{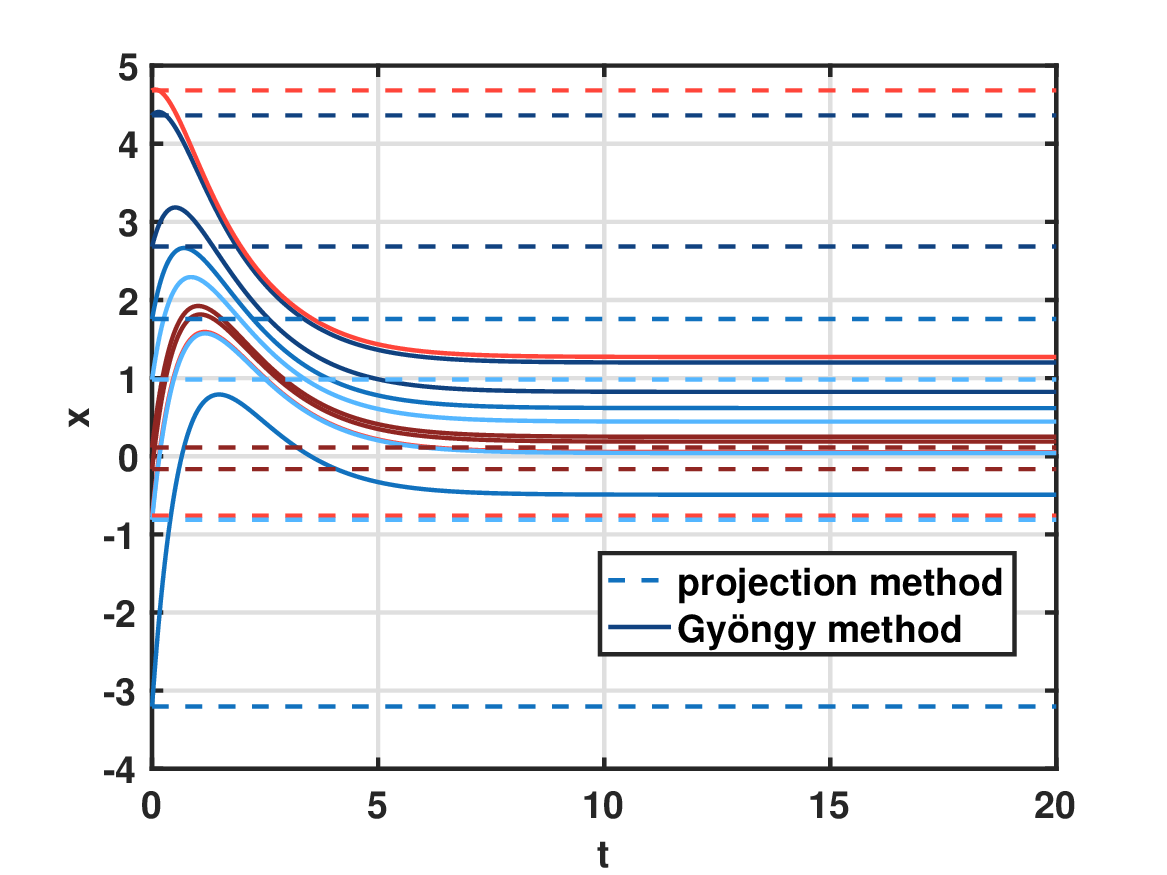}
     \caption{Solution of (\ref{eq:counterexampleXP}) and (\ref{eq:counterexampleXG}) for 10 representative initial conditions  $X_0\sim \cN(-1,0.1)$ (left panel) and $X_0\sim \cN(-1,10)$ (right panel). Both figures show convergence of the Gyöngy trajectories (solid lines) to an ensemble with the correct asymptotic mean and variance on average, whereas  the solutions of the projected dynamics (dashed lines) stay constant. (The same colour indicates that the solutions start from the same initial conditions.) In particular the trajectories of $X_G$ do not converge to $X_P(t)=X(0)$ for $t$ large.  The trajectories nicely show the inflation or deflation of trajectories,  depending in the whether $\Sigma_0^{xx}$ is smaller or larger than the asymptotic variance $\Sigma^{xx}_\infty$.}
      \label{fig:exampleGyongy}
\end{figure}



\appendix
\section{Long time behaviour of non-autonomous SDEs}\label{appendix:long time behaviour of non-autonomous SDEs}

\begin{proof}[Proof of Theorem \ref{thm:appendixnonautonomous}.] This is a consequence of \cite[Theorem 6.5]{cass2021long}. Notice that the statement of this theorem assumes  that $\sigma$ is globally Lipshitz (no boundedness required) and also allows to consider space-dependent $\sigma$. 

Aside from this fact, \cite[Theorem 6.5]{cass2021long} holds under four main assumptions, listed in   \cite[Hypothesis 6.1, H.1 to H.4]{cass2021long}. We first show that our Assumption \ref{ass:Lunardiappendix} \ref{ass:unformellipticity appendix}, \ref{ass:Lyapcondappendix}, \ref{ass:ObtuseAngleConditionappendix} imply [H.1], [H.2], [H.3] in \cite[Hypothesis 6.1]{cass2021long}, respectively. We then show that the proof of \cite[Theorem 6.5]{cass2021long} still works if \cite[Hypothesis 6.1, H.4]{cass2021long} is replaced by our Assumption \ref{ass:Lunardiappendix} \ref{ass:coefficients have limit appendix}. 

We start by noting that \cite[Theorem 6.5]{cass2021long} refers to dynamics of the form 
\begin{align}
    & d\mathcal X_t =  B(\zeta_t, \mathcal X_t) dt + \sum_{i=1}^d\sigma\col (\zeta_t, \cX_t) \circ dU^i_t \label{eqn:SDEappendix}\\
    & d\zeta_t = A(\zeta_t) dt \label{eqn:ODEappendix}  \,,
\end{align}
    where $A\colon\R\rightarrow \R$ so that \eqref{eqn:ODEappendix} is just a one-dimensional ODE and \eqref{stratonovich non-autonomous general SDE} is recovered by just choosing $A(\zeta)\equiv 1$. There are various comments in \cite{cass2021long} on the relation between the system \eqref{eqn:SDEappendix}-\eqref{eqn:ODEappendix} and the dynamics \eqref{stratonovich non-autonomous general SDE}, we do not repeat them here. 

\cite[Hypothesis 6.1, H.1]{cass2021long} is implied by our Assumption \ref{ass:Lunardiappendix} \ref{ass:unformellipticity appendix}. To see this just note that the vector fields that in \cite[Hypothesis 6.1, H.1]{cass2021long} are denoted by $V_0=(U_0, W_0), V_1=(U_1, 0),\dots ,V_d=(U_d,0)$ are in our case $V_0=(B, A)$ and $V_i = (\sigma\col,0)$ for $i=1, \dots, d$. Since $\sigma$ is uniformly elliptic, it is easy to see that the vectors $V_0, V_1, \dots, V_d$ satisfy the UFG condition. We do not recall the UFG condition here, as stating such a condition requires some amount of notation, but we refer the reader to  \cite[Definition 3.1]{cass2021long} for a precise statement and observe that such a condition  is satisfied in our case  with $m=1$. In other words, the set that in that paper is called $\mathcal A$ in this simple case is just the set $\mathcal A= \{1, \dots, d\}$.  

\cite[Hypothesis 6.1, H.2]{cass2021long} is implied by our Assumption \ref{ass:Lunardiappendix} \ref{ass:Lyapcondappendix}, see \cite[Note 6.2]{cass2021long}, fourth bullet point in that note. 

\cite[Hypothesis 6.1, H.3]{cass2021long} is implied by our Assumption \ref{ass:Lunardiappendix} \ref{ass:ObtuseAngleConditionappendix}. This is easy to check by using the above remarks and the identity \cite[Equation (6.9)]{cass2021long}. 

All is left to show is that \cite[Hypothesis 6.1, H.4]{cass2021long} can be replaced by our Assumption \ref{ass:Lunardiappendix} \ref{ass:coefficients have limit appendix}. \cite[Hypothesis 6.1, H.4]{cass2021long} requires $\zeta_t$ solution of the ODE  \eqref{eqn:ODEappendix} to have a limit. Informally, note that  if $\lim_{t\rightarrow \infty} \zeta_t  = \bar\zeta \in \R$ then, since $B$ is smooth,  clearly $\lim_{t\rightarrow \infty}B(\zeta_t, x) = B(\bar\zeta, x) = \bar B(x)$ i.e. the drift coefficient admits a limit (and similarly for the diffusion coefficient). So, at least intuitively, it should be clear that these two assumptions achieve a similar goal.  We however strictly speaking cannot say that Assumption \ref{ass:Lunardiappendix} \ref{ass:coefficients have limit appendix}  implies \cite[Hypothesis 6.1, H.4]{cass2021long}. The latter was phrased in that way because in \cite{cass2021long} the authors were interested not only the in dynamics $\cX_t$ but also in the full system \eqref{eqn:SDEappendix}-\eqref{eqn:ODEappendix} and such a system is not ergodic if one chooses $A\equiv 1$. Nonetheless, since we are simply interested in \eqref{eqn:SDEappendix}, our  Assumption \ref{ass:Lunardiappendix} \ref{ass:coefficients have limit appendix} suffices to achieve the result and indeed the proof of \cite[Theorem 6.5]{cass2021long} goes through also under this assumption. We do not repeat the whole proof, as it is lengthy but point out that the main place that could potentially lead to difficulties is in the section of that proof that is deferred to the proof of \cite[Lemma B.1]{cass2021long}, reasoning after (B.13). The important thing to notice is that the estimate above (B.14) can still be achieved because the validity of  the estimate (19) for the semigroup there denoted by $\mathcal P_{t}$ is ensured by our Assumption \ref{ass:Lunardiappendix} \ref{ass:ObtuseAngleConditionappendix}. 
\end{proof}

\begin{lemma}\label{lemma B3}
    If $A$ is Hurwitz (i.e. all eigenvalues have strictly negative real part), and the matrix pair $(A,C)$ satisfies the Kalman rank condition,
    \begin{equation}
        {\rm rank}\Big(C|AC|A^2C|\ldots|A^{n-1}C\Big)=n\,,
    \end{equation}
    then the OU process $dZ_t=AZ_tdt+Cd\bf{W}_t$ has a unique stationary Gaussian distribution $\mathcal{N}(0,\Sigma)$ that is reached asymptotically as $t\to\infty$.  Moreover $\Sigma$ is symmetric positive definite (s.p.d.) and satisfies the linear Lyapunov equation
    \begin{equation}\label{eq:lyapunov}
        A \Sigma + \Sigma A^T  = -CC^T \,.
    \end{equation}
\end{lemma}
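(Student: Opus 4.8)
This is a classical result about linear Gaussian (Ornstein--Uhlenbeck) processes, so I would proceed in three steps: (i) solve the SDE explicitly and identify the law of $Z_t$ as Gaussian; (ii) take the limit $t\to\infty$ using the Hurwitz hypothesis; (iii) verify that the limiting covariance satisfies the Lyapunov equation and that the Kalman rank condition forces it to be strictly positive definite.

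\emph{Step 1: explicit solution and Gaussianity.} By the variation-of-constants formula (exactly as in \eqref{eqn:Carstenexample-solution}), the solution with initial datum $Z_0$ (independent of $W$) is
$$
Z_t = e^{At}Z_0 + \int_0^t e^{A(t-s)} C \, d\mathbf{W}_s \,.
$$
The stochastic integral is a Wiener integral of a deterministic matrix-valued integrand, hence Gaussian with mean zero; adding the (Gaussian, or deterministic) initial datum keeps $Z_t$ Gaussian. Its mean is $m_t = e^{At}\mathbb E Z_0$ and, by the It\^o isometry,
$$
\Sigma_t := \mathrm{Cov}(Z_t) = e^{At}\,\mathrm{Cov}(Z_0)\,e^{A^Tt} + \int_0^t e^{A(t-s)} CC^T e^{A^T(t-s)}\, ds \,.
$$

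\emph{Step 2: convergence as $t\to\infty$.} Since $A$ is Hurwitz, $\|e^{At}\|\le M e^{-\lambda t}$ for some $M,\lambda>0$, so $m_t\to 0$ and the first term of $\Sigma_t$ vanishes. For the integral term, substitute $u=t-s$ to get $\int_0^t e^{Au}CC^Te^{A^Tu}\,du$, whose integrand is bounded by $M^2\|CC^T\|e^{-2\lambda u}$; the integral therefore converges absolutely as $t\to\infty$ to
$$
\Sigma := \int_0^\infty e^{Au} CC^T e^{A^Tu}\, du \,,
$$
and $\Sigma_t\to\Sigma$. Hence $Z_t$ converges in distribution to $\mathcal N(0,\Sigma)$, and since the limit does not depend on $Z_0$, starting the chain at $\mathcal N(0,\Sigma)$ gives a stationary solution; uniqueness of the stationary law follows because any stationary law must be this weak limit. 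The matrix $\Sigma$ is symmetric and positive \emph{semi}-definite by construction ($v^T\Sigma v = \int_0^\infty |C^Te^{A^Tu}v|^2\,du\ge 0$).

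\emph{Step 3: Lyapunov equation and positive definiteness.} Differentiating $\Sigma(T):=\int_0^T e^{Au}CC^Te^{A^Tu}\,du$ under the integral, or more simply computing $A\Sigma+\Sigma A^T = \int_0^\infty \frac{d}{du}\big(e^{Au}CC^Te^{A^Tu}\big)\,du = \big[e^{Au}CC^Te^{A^Tu}\big]_0^\infty = -CC^T$, where the boundary term at $\infty$ vanishes because $A$ is Hurwitz. For strict positivity: if $v^T\Sigma v = 0$ then $C^Te^{A^Tu}v=0$ for all $u\ge 0$; differentiating repeatedly at $u=0$ gives $C^T(A^T)^k v = 0$ for all $k\ge 0$, i.e. $v$ is orthogonal to the columns of $C|AC|\cdots|A^{n-1}C$, which by the Kalman rank condition span $\R^n$, forcing $v=0$. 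Thus $\Sigma$ is s.p.d. The only mild obstacle is being careful about the direction of the implication for uniqueness (every stationary measure equals the universal weak limit) and about justifying the boundary term at infinity in the Lyapunov computation, but both are immediate from the exponential decay $\|e^{At}\|\le Me^{-\lambda t}$.
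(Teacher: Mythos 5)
Your proposal is correct and follows essentially the same route as the paper: explicit variation-of-constants solution, convergence of the covariance integral $\int_0^\infty e^{Au}CC^Te^{A^Tu}\,du$ under the Hurwitz hypothesis, positive definiteness from the Kalman rank condition, and the Lyapunov equation from the algebraic identity for the integral (your total-derivative computation is just the paper's integration by parts in disguise, and your repeated differentiation at $u=0$ replaces the paper's Cayley--Hamilton step without changing the substance).
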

\begin{proof}
    The convergence $\bE[Z_t]\to 0$ is a direct consequence of the Hurwitz property. Setting $\Sigma_t:={\rm Cov}(Z_t,Z_t)$ and using that $\exp(At)\Sigma_0\exp(A^Tt)\to 0$ as $t\to\infty$, the matrix
    \begin{equation*}
        \Sigma :=  \lim_{t\to\infty}\int_0^t \exp(A(t-v)) CC^T \exp(-A^T(t-v))\,dv= \int_0^\infty \exp(Av)CC^T \exp(A^T v)\,dv
    \end{equation*}
    exists. As a consequence of the Caley-Hamilton Theorem (e.g.~\cite{axler1997}), a vector $h\neq 0$ lies in the kernel of $(e^{-At}C)^T $ for any $t>0$ iff $h^T A^{k}C=0$ for all $k=0,\ldots n-1$. This implies that $\Sigma_\infty>0$ if the Kalman condition holds. Integrating the rightmost integral by parts and using that $A$ is invertible, yields the Lyapunov equation.
\end{proof}
\section*{Acknowledgment}
The authors acknowledge the support from the International Centre for Mathematical Sciences (ICMS) through a Research In Group Grant and through an  ICMS-funded workshop,  where this project started. The research of H.D. was funded by the EPSRC Grant EP/Y008561/1. M.O. gratefully acknowledges support from a Research Fellowship of the Royal Society of Edinburgh RSE3789. The research of C.H. was paertially funded by the Deutsche
Forschungsgemeinschaft (DFG) through the grant CRC 1114: Scaling Cascades in Complex Systems (project no. 235221301) and the
Federal Ministry of Education and Research and the State of Brandenburg within the framework of the joint project EIZ: Energy Innovation Center (project numbers 85056897 and 03SF0693A).

\end{document}